\newcommand{\half}{\frac{1}{2}}
\newcommand{\thalf}{\tfrac{1}{2}}
\newcommand{\eqdef}{\stackrel{\mathclap{\mbox{\tiny def}}}{=}}
\newcommand{\convd}{\stackrel{\mathclap{\mbox{\tiny d}}}{\rightarrow}}
\newcommand{\convw}{\stackrel{\mathclap{\mbox{\tiny w}}}{\rightarrow}}
\newcommand{\dd}{\mathrm{d}}
\newcommand{\ic}{\mathbf{1}}
\newcommand{\law}{\mathrm{law}}
\newcommand{\supp}{\mathrm{supp}}
\newtheorem{theorem}{Theorem}[section]
\newtheorem{lemma}[theorem]{Lemma}
\newtheorem{proposition}[theorem]{Proposition}
\newtheorem{corollary}[theorem]{Corollary}
\theoremstyle{definition}
\newtheorem{definition}[theorem]{Definition}
\newtheorem{assumption}[theorem]{Assumption}
\theoremstyle{remark}
\newtheorem{remark}[theorem]{Remark}
\newcommand{\bbC}{\mathbb{C}}
\newcommand{\bbE}{\mathbb{E}}
\newcommand{\bbN}{\mathbb{N}}
\newcommand{\bbP}{\mathbb{P}}
\newcommand{\bbR}{\mathbb{R}}
\newcommand{\bbZ}{\mathbb{Z}}
\newcommand{\call}{\mathcal{l}}
\newcommand{\calA}{\mathcal{A}}
\newcommand{\calC}{\mathcal{C}}
\newcommand{\calE}{\mathcal{E}}
\newcommand{\calF}{\mathcal{F}}
\newcommand{\calH}{\mathcal{H}}
\newcommand{\calL}{\mathcal{L}}
\newcommand{\calN}{\mathcal{N}}
\newcommand{\calS}{\mathcal{S}}
\newcommand{\calW}{\mathcal{W}}
\newcommand{\calX}{\mathcal{X}}
\newcommand{\fHat}{\hat{f}}
\newcommand{\kHat}{\hat{k}}
\newcommand{\DHat}{\hat{D}}
\newcommand{\FHat}{\hat{F}}
\newcommand{\RHat}{\hat{R}}
\newcommand{\VHat}{\hat{V}}
\newcommand{\rhoHat}{\hat{\rho}}
\newcommand{\phiHat}{\hat{\phi}}
\newcommand{\psiHat}{\hat{\psi}}
\newcommand{\etaTil}{\tilde{\eta}}
\newcommand{\assign}{:=}
\newcommand{\cdummy}{\cdot}
\newcommand{\mathd}{\mathrm{d}}
\colorlet{darkgreen}{green!60!black}
\newcommand{\vfock}{L^2(V)}
\newcommand{\efock}{L^2(V^\varepsilon)}
\def\vfocki#1{\Gamma_{#1} L^2(V)}
\def\vsob#1#2{\calH^{#1}_{#2}(V)}
\def\esob#1#2{\calH^{#1}_{#2}(V^\varepsilon)}
\def\vsobi#1#2#3{\Gamma_{#3}\calH^{#1}_{#2}(V)}
\author{Harry Giles, Lukas Gr{\"a}fner}
\title{Construction of the $1d$ Self-repelling Brownian Polymer}
\institute{University of Warwick, CV4 7AL, UK \\ 
\hspace{-0.33cm}\email{harry.giles@warwick.ac.uk, \\ lukas.grafner@warwick.ac.uk}}
\begin{document}
\maketitle

\begin{abstract}

We consider the self-repelling Brownian polymer, introduced in \cite{AmitPariPeli83_AsymptoticBehavior}, which is formally defined as the solution of a singular SDE. The singularity comes from the drift term, which is given by the negative gradient of the local time. We construct a solution of the equation in $d = 1$, give a dynamic characterisation of its law, and show that it is the limiting distribution for a natural family of approximations. In addition, we show that the solution is superdiffusive.

As part of the construction, we consider a singular SPDE which is solved by the (recentered) local time. Using the method of energy solutions, we show that this SPDE is well-posed, and prove that this property can be transferred to the original process. Our results hold for a larger class of drift terms, in which the gradient of the local time is a special case.

\end{abstract}

\bigskip\noindent
{\it Key words and phrases.} Self-repelling motion, local time, singular SDEs, distributional drift, superdiffusivity, Stochastic Partial Differential Equations, energy solutions.
\bigskip

\section{Introduction and main results}
The self-repelling Brownian polymer (SRBP) is an $\bbR^d$-valued process that is repelled by its own local time. Formally, it is given by the solution of the following SDE:
$$ \dd X_t = \dd B_t - \beta^2 \nabla L_t(X_t) \dd t, \qquad X_0 = 0 $$
where $\beta^2 > 0$ is a coupling constant and $L_t(x)$ is the local time density of $X$ at the site $x \in \bbR^d$. Expanding $L_t(x) = \int_0^t \delta(x-X_s) \dd s$, where $\delta$ is the Dirac delta at zero, the equation becomes
\begin{equation}\label{eq:13}
\dd X_t = \dd B_t - \beta^2 \Big(\int_0^t \nabla \delta(X_t- X_s) \dd s \Big) \dd t, \qquad X_0 = 0
\end{equation}
The drift term in \eqref{eq:13} is a highly singular object, in which the gradient of the Dirac delta is evaluated along the history of the path, and as such, the classical solution theory of SDEs cannot be applied. The goal of this paper is to give a meaning to this equation in the case $d = 1$.

The SRBP has its origins in the physics literature \cite{AmitPariPeli83_AsymptoticBehavior} with the introduction of the {\it ``true'' self-avoiding walk} (TSAW), which was presented as a model of polymer statistics\footnote{Named so as to distinguish itself from the more famous {\it self-avoiding walk} \cite{MadrasSlade96_SelfAvoidingWalk}.}. The TSAW evolves as a discrete time nearest neighbour walk $(X_n)_{n = 0}^\infty$ on $\bbZ^d$, and one can think of it as a discrete version  of the SRBP. Upon defining $\ell_n(x)$ to be the local time, $\ell_n(x) = \sum_{i = 0}^n \ic\{X_i = x\}$, a neighbouring site is chosen according to the Boltzmann distribution
$$ \bbP(X_{n+1} = x | X_0, \ldots, X_n ) = \frac{e^{- \beta^2 (\ell_n(x) - \ell_n(X_n))}}{\sum_{x \sim X_n} e^{- \beta^2 (\ell_n(x) - \ell_n(X_n))}} \, .$$
In fact, the SRBP, as written in \eqref{eq:13}, is also given in this work, albeit non-rigorously (see \cite[Equation (4.1)]{AmitPariPeli83_AsymptoticBehavior}). Shortly after, the SRBP appeared in the probability literature \cite{NorrRogeWill87_SelfavoidingRandom}.

A number of results concerning the SRBP have previously been established, with a particular interest in the large scale behaviour of the process. Whether the behaviour is diffusive or superdiffusive, is known to be dimension dependent. A non-rigorous scaling argument (see the appendix of \cite{TothValko12_SuperdiffusiveBounds}) leads to the following conjecture
\begin{align*}
\bbE[|X_t|^2] \sim
\begin{cases}
t^{4/3} & d = 1 \\
t (\log t)^{1/2} & d = 2 \\
t & d \geqslant 3
\end{cases}
\end{align*}
The case $d \geqslant 3$ was treated in \cite{HorvTothVeto12_DiffusiveLimits}, and the case $d = 2$ has been more recently studied in \cite{CannizzarGiles25_InvariancePrinciple} in the so-called weak-coupling scaling. As for the case $d = 1$, the best results are only bounds \cite{TarrTothValk12_DiffusivityBounds}, which show $t^{5/4} \lesssim \bbE[|X_t|^2] \lesssim t^{3/2}$, in the Tauberian sense\footnote{The result holds at the level of the Laplace transform. See Theorem \ref{thm:super} of the present work.}. In the $d = 1$ case, under the correct superdiffusive scaling, one expects to see convergence to the ``true'' self-repelling motion \cite{TothWerner98_TrueSelfrepelling}. For more details on the history of the process, see \cite{HorvTothVeto12_DiffusiveLimits,TarrTothValk12_DiffusivityBounds,CannizzarGiles25_InvariancePrinciple} and the references therein.

In all previous works, the model that is considered is not \eqref{eq:13}, but the equation in which the Dirac delta is replaced by a smooth function. This is undesirable as it destroys the Markov property (in the spatial variable) for the local time $L_t(x)$ of the process $X$. The question of \textit{locality} is significant, as it would allow for the use of Ray-Knight type theorems to prove convergence to the ``true'' self-repelling motion of \cite{TothWerner98_TrueSelfrepelling}. This was first achieved in \cite{Toth95_TrueSelfAvoiding}, in which the TSAW is considered (with bond repulsion) and the Markov property is shown to hold. This idea was further explored in the recent work \cite{KosyginPeterso25_ConvergenceRescaled}, and we expect their results to generalise to the present case.

As mentioned above, the difficulty in making sense of \eqref{eq:13} lies in the fact that $\nabla \delta$ is a distribution. Singular SDEs with distributional drift constitute a very active and fruitful area of research \cite{DelarueDiel16,CatellierGubinelli2016,FlandoliIssoglioRusso17,ZhangZhao17,CannizzaroChouk18,
CannizzaroHaunschmidSibitzToninelli21,HarangPerkowski2021,Kremp2023,
ChatzigeorgiouEtAl25_GaussianFreefield,HaoZhang23,Armstrong2024,
GraefnerPerkowski24,DareiotisGerencerLeLing24,ButkovskyMytnik24,GaleatiGerencser2025}. Compared to the above works, an additional layer of difficulty comes from the path-dependency of the drift term in \eqref{eq:13}, as this destroys the Markov property. Nonetheless, there is some literature in this direction. In \cite{FlandoliRussoWolf04,OhashiRussoTeixeira20}, they consider the case of a function-valued path-dependent drift superpositioned with a distributional drift that is not path-dependent. We also refer to \cite{HuangWang23,WangYuanZhao25}, who consider a function-valued drift that jointly depends on the past and the law of the process. Neither case applies for \eqref{eq:13}. 

A possible approach to solving \eqref{eq:13} would be to decouple the SDE and the self-interaction. First, for a fixed Brownian path, one could consider solving the following ODE in a pathwise sense, for a large class of drift terms $b$:
\begin{equation}\label{consistentSDE}
\mathd X_t=b(t,X_t)\mathd t+\mathd B_t
\end{equation}
At this point, one could use a fixed-point argument to impose the condition $b=\nabla L_t$. Pathwise methods, as in \cite{CatellierGubinelli2016,GaleatiGubinelli2022,GaleatiGerencser2025}, may be applied to \eqref{consistentSDE} in the case of Brownian noise provided that the drift term has non-negative regularity, $\mathcal{C}^{\alpha}$ for some $\alpha \ge 0$, where $\mathcal{C}^{\alpha}$ is the space of $\alpha$-Hölder continuous functions on $\mathbb{R}$. However, in our case, it is reasonable to assume that the regularity of the local time, $b=\nabla L_t$, is not better than that of Brownian motion, $L_t\in \mathcal{C}^{1/2-}$ (see Corollary 1.8 from §1 in Chapter VI of \cite{RevouzYor1991}). Therefore, to the best of our understanding, these methods cannot be applied. 

We conclude that, to the best of our knowledge, the SRBP with the Dirac interaction, as written in \eqref{eq:13} lies outside of the scope of existing solution theories for singular SDEs and thus deserves a different analysis.

\subsection{The model and main results concerning the polymer}\label{sec:model-main-results}
In this work, we give a meaning to the SRBP, not just in the case of Dirac interaction, but for a larger class of singular interaction functions. Namely, writing $\calS(\bbR)$ for the space of Schwartz functions, with the dual space $\calS'(\bbR)$ of tempered distributions, we consider interaction functions $V \in \calS'(\bbR)$ that are both positive definite and uniformly locally integrable in the following sense.
\begin{assumption}\label{ass:posdef}
The Fourier transform exists as a non-negative function, $\VHat \in L^1_{\text{loc}}(\bbR), \VHat \geqslant 0$. Moreover, $\VHat$ satisfies:
\begin{equation}\label{eq:63}
\sup_{z \in \bbZ} \int_{z}^{z+1} \VHat(p) \dd p < \infty
\end{equation}
\end{assumption}

\begin{remark}
The assumption of positive definiteness is standard and also taken in previous works \cite{HorvTothVeto12_DiffusiveLimits,TarrTothValk12_DiffusivityBounds,CannizzarGiles25_InvariancePrinciple}, as it is related to the existence of a stationary measure for the so-called ``environment process'' (see Lemma \ref{lem:2}). The uniform local integrability condition, on the other hand, is the minimal assumption that we can place under which our method will work (see Remark \ref{rmk:2}).

The canonical choice is to take $V = \delta$, in which case $\VHat(p) = 1$ and Assumption \ref{ass:posdef} is clearly satisfied. This is the most physically interesting example, originating from the physics literature \cite{AmitPariPeli83_AsymptoticBehavior}, as previously mentioned. For a more exotic example, one may take $V$ to be any positive definite measure of finite mass. 
\end{remark}

For $V \in \calS'(\bbR)$ satisfying Assumption \ref{ass:posdef}, $\omega : \bbR \rightarrow \bbR$, and $\beta \geqslant 0$, we say that the the \textit{self-repelling Brownian polymer with interaction $V$, coupling constant $\beta \geqslant 0$, in the environment $\omega$}, is the solution of the following SDE
\begin{equation}\label{eq:3}
\dd X_t = \dd B_t - \beta \omega(X_t) \dd t - \beta^2 \Big( \int_0^t \nabla V (X_t-X_s) \dd s\Big) \dd t, \qquad X_0 = 0
\end{equation}
The element $V \in \calS'(\bbR)$ governs the nature and strength of the self-interaction, and an additional drift is provided by the field $\omega$. In this work, we shall consider a setting in which the environment $\omega$ is drawn at random and independently of the Brownian motion $(B_t)_{t \geqslant 0}$ driving the SDE. In this regard, one may think of $(X_t)_{t \geqslant 0}$ as being an SRBP in a random environment.

The reason to introduce the environment $\omega$ is technical. In particular, we assume $\law(\omega) \ll \pi$ is absolutely continuous with respect to a certain reference measure $\pi$ on $\calS'(\bbR)$. The reference measure $\pi = \pi(V)$ that we consider is such that $(\omega(x) : x \in \bbR)$ is a centred Gaussian process with covariance $\bbE[\omega(x) \omega(y)] = V(x - y)$; see Section \ref{sec:notat-wien-space} for more details. In particular, one sees the need for the positive definite assumption on $V$. This is a natural choice, as it puts the environment process into a time-stationary regime (see the comment just before Lemma \ref{lem:2}). We emphasise that our results do not cover the case $\omega \equiv 0$. In all recent works on the SRBP, \cite{TarrTothValk12_DiffusivityBounds,CannizzarGiles25_InvariancePrinciple,HorvTothVeto12_DiffusiveLimits}, it is assumed $\law(\omega) = \pi$.

We write $\text{SRBP}(V, \beta, \omega)$ for the law of the solution $(X_t)_{t \geqslant 0}$ on the path space $C(\bbR_{\geqslant 0}, \bbR)$ for a deterministic field $\omega$. For a distribution $\pi_0 \ll \pi$, we define the annealed law $\text{SRBP}(V, \beta, \pi_0)$ on the path space $C(\bbR_{\geqslant 0}, \bbR)$ according to
$$ \text{SRBP}(V, \beta, \pi_0) := \int \text{SRBP}(V, \beta, \omega) \pi_0( \dd \omega) $$
We emphasise that $\text{SRBP}(V, \beta, \pi_0)$ is a priori only well defined when $\pi_0$ and $V$ are sufficiently smooth such that the classical theory of SDEs yields a strong solution to \eqref{eq:3}, almost surely with respect to the environment measure $\pi_0$.  We refer to this as {\it the smooth setting}, and it is described in more detail in Section \ref{sec:mollified-srbp}.

We wish to consider a more general case, $V \in \calS'(\bbR)$ satisfying no condition of regularity beyond Assumption \ref{ass:posdef}. To give meaning to \eqref{eq:3}, a natural strategy is to mollify the distribution. Choosing $\rho \in \calS(\bbR)$ to be a positive definite mollifier of unit mass, $\rhoHat \geqslant 0$, $\int \rho(x) \dd x = 1$, one may consider $V^\varepsilon := \rho_\varepsilon * \rho_\varepsilon * V$, in which $\rho_\varepsilon$ is the ``sharpened'' mollifier given according to $\rho_\varepsilon(x) = \varepsilon^{-1} \rho(\varepsilon^{-1} x)$ for some $\varepsilon > 0$. The reason for applying the mollifier twice is because if $\omega \sim \pi(V)$, then it holds that $\omega^\varepsilon := \rho_\varepsilon * \omega$ is distributed according to $\pi(V^\varepsilon)$. In general, we write $\pi^\varepsilon_0$ for the law of $\omega^\varepsilon$ when $\omega \sim \pi_0$. Since $V^\varepsilon \in C^\infty(\bbR)$, and $\omega^\varepsilon \in C^\infty(\bbR)$ almost surely with respect to $\pi^\varepsilon_0$, this puts us into the smooth setting under which $\text{SRBP}(V^\varepsilon, \beta, \pi^\varepsilon_0)$ is a priori well defined. One may hope that as the mollification is removed, $\varepsilon \rightarrow 0$, the process $(X^\varepsilon_t)_{t \geqslant 0}$ converges in distribution to some limiting process, $(X_t)_{t \geqslant 0}$. Indeed, this is the content of one of our main theorems (for the full version, see Theorem \ref{thm:Xconvergence}):
\begin{theorem}\label{thm:main}
Let $V \in \calS'(\bbR)$ satisfy Assumption \ref{ass:posdef}, and let $\pi_0 \ll \pi(V)$ be such that $\dd \pi_0 / \dd \pi \in L^2(V)$. Then, the annealed law $\text{SRBP}(V^\varepsilon, \beta, \pi^\varepsilon_0)$ converges as $\varepsilon \rightarrow 0$ to a limiting distribution that is independent of the choice of mollifier $\rho$. We call this limiting distribution $\text{SRBP}(V, \beta, \pi_0)$.
\end{theorem}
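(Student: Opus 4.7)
The plan is to reduce convergence of $\text{SRBP}(V^\varepsilon, \beta, \pi_0^\varepsilon)$ in path-space to convergence of the environment process viewed from the polymer, which (i) is stationary under a Gaussian reference measure and (ii) solves a singular SPDE that can be handled by the method of energy solutions.

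\textbf{Step 1 (Environment viewed from the polymer).} In the smooth setting, I introduce the $\calS'(\bbR)$-valued recentered local time / environment process
\[
\eta^\varepsilon_t(x) \assign \omega^\varepsilon(x + X^\varepsilon_t) + 2\beta \int_0^t V^\varepsilon(x + X^\varepsilon_t - X^\varepsilon_s)\,\dd s,
\]
so that the drift of $X^\varepsilon$ is recovered (up to sign and a multiplicative constant) as the spatial derivative of $\eta^\varepsilon_t$ evaluated at the origin. By the stationarity statement of Lemma \ref{lem:2}, under initial law $\pi^\varepsilon$ the process $(\eta^\varepsilon_t)_{t \geqslant 0}$ is a stationary Markov process with invariant measure $\pi^\varepsilon$. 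Under the perturbed initial condition $\pi_0^\varepsilon$, the hypothesis $\dd\pi_0/\dd\pi \in \vfock$ together with Cauchy-Schwarz transfers all stationary $L^2$-estimates to the $\pi_0^\varepsilon$-initialised dynamics, uniformly in $\varepsilon$.

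\textbf{Step 2 (SPDE and energy estimates).} Applying Itô's formula to $\langle \eta^\varepsilon_t, \varphi \rangle$ for Schwartz test functions $\varphi$, I derive a Burgers-type SPDE satisfied by $\eta^\varepsilon$, whose nonlinear drift becomes singular as $\varepsilon \to 0$. The Gubinelli-Jara Itô trick, combined with the stationarity from Step 1, controls time integrals of functionals of $\eta^\varepsilon$ by Dirichlet-form quantities uniformly in $\varepsilon$. This yields tightness of $(\eta^\varepsilon)_{\varepsilon > 0}$ in a suitable path space of tempered distributions and identifies every subsequential limit as an energy solution of the limiting SPDE. The intrinsic uniqueness theorem for energy solutions (whose statement contains no mollifier) then forces $\eta^\varepsilon$ to converge in law to a unique limit $\eta$ depending only on $V$, $\beta$ and $\pi_0$.

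\textbf{Step 3 (Transfer to the polymer).} Couple the driving Brownian motions $(B_t)$ across $\varepsilon$ and write
\[
X^\varepsilon_t = B_t - \beta \int_0^t D\eta^\varepsilon_s\,\dd s,
\]
where $D\eta^\varepsilon_s$ denotes the linear functional recovering the SDE drift from $\eta^\varepsilon_s$ (a directional spatial derivative at the origin). The Itô-trick bound of Step 2 applied to the additive functional $s \mapsto D\eta^\varepsilon_s$ gives $\varepsilon$-uniform $L^2$-control of the integral $\int_0^t D\eta^\varepsilon_s\,\dd s$ as a continuous process in $t$. Consequently, joint convergence $(B, \eta^\varepsilon) \to (B, \eta)$ upgrades to joint convergence of $(B, \eta^\varepsilon, X^\varepsilon)$ in $C(\bbR_{\geqslant 0}, \bbR)$ by a continuous-mapping argument, and the marginal law of $X^\varepsilon$ converges to the announced limit $\text{SRBP}(V, \beta, \pi_0)$, which is mollifier-independent because $\eta$ is.

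\textbf{Main obstacle.} The hardest part is the pathwise transfer in Step 3: the limiting field $\eta_s$ has spatial regularity no better than $\calC^{1/2-}$, so $D\eta_s$ has no pointwise meaning and the limiting polymer drift only exists as a time-integrated, $L^2$-bounded process obtained through the Itô trick. Showing that $\int_0^t D\eta^\varepsilon_s\,\dd s$ converges as a continuous process (not merely in finite-dimensional distributions), and identifying the limit independently of the approximating sequence, is where the condition $\dd\pi_0/\dd\pi \in \vfock$ and the uniqueness of energy solutions both play an indispensable role, and where the technical core of the proof lies.
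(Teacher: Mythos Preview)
Your Steps 1 and 2 are essentially the paper's strategy: pass to the environment process $\eta^\varepsilon$, establish the Itô trick and tightness, identify subsequential limits as energy solutions of the limiting SPDE, and invoke uniqueness. (Minor discrepancies: in the paper's convention $\eta_t$ already contains a $\nabla V$, so the drift of $X$ is $\eta_t(0)$, not a spatial derivative; this is cosmetic.)

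Step 3, however, has a real gap, and you have misidentified the main obstacle. You propose to couple the driving Brownian motions across $\varepsilon$, obtain ``joint convergence $(B,\eta^\varepsilon)\to(B,\eta)$'', and then apply a continuous-mapping argument. The problem is that energy-solution uniqueness gives you only the \emph{marginal} law of $\eta$; it says nothing about the joint law of $(B,\eta)$ in the limit, so the joint convergence you assert is exactly what remains to be proved. The paper resolves this by observing that $B$ can be \emph{recovered from $\eta$ alone}: from the martingale $M_t(\ell_\phi)$ in the energy-solution formulation one has $M_t(\ell_\phi)=\int_0^t \eta_s(\nabla\phi)\,\dd B_s$, and hence $B_t=\int_0^t \eta_s(\nabla\phi)^{-1}\,\dd M_s(\ell_\phi)$. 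Thus $X=\mathcal X^V(\eta)$ is a (non-continuous) functional of $\eta$ alone, and the joint law of $(B,\eta,X)$ is pinned down by the marginal law of $\eta$.

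Correspondingly, the technically delicate part of the transfer is not the additive functional $\int_0^t \eta_s(0)\,\dd s$ --- the paper calls this the easy term, handled by a one-line Itô-trick bound --- but the reconstruction of $B$. That reconstruction involves (i) a stochastic integral against $M(\ell_\phi)$, (ii) the singular integrand $\eta_s(\nabla\phi)^{-1}$, and (iii) the fact that $M(\ell_\phi)$ itself contains the Itô-trick-defined term $\int\calA\ell_\phi(\eta_s)\,\dd s$. The paper approximates all three simultaneously by continuous functionals $F^m$ of the path $\eta$ (truncating $x\mapsto x^{-1}$, replacing the stochastic integral by Riemann sums, and mollifying the $\calA\ell_\phi$ term), and shows $\bbE\|X^\varepsilon-F^m(\eta^\varepsilon)\|_{C_T}\to 0$ uniformly in $\varepsilon$. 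Your sketch does not touch this mechanism.
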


It is natural to ask about properties of the limiting distribution $\text{SRBP}(V, \beta, \pi_0)$. Using the techniques of \cite{TarrTothValk12_DiffusivityBounds}, we show that the limiting measure $\text{SRBP}(V, \beta, \pi_0)$ in the stationary case $\pi = \pi_0$ satisfies superdiffusive bounds as previously shown in the smooth case. The result depends on the infrared behaviour of $V$ and under the canonical choice $V = \delta$, we have the aforementioned bound, $t^{5/4} \lesssim \bbE[|X_t|^2] \lesssim t^{3/2}$:
\begin{theorem}\label{thm:super}
Let $V\in \calS'(\bbR)$ satisfy Assumption \ref{ass:posdef} and suppose that there exists $\alpha \in (-1, 1)$ such that
\begin{equation}\label{eq:42}
0 < \liminf_{p \rightarrow 0} |p|^{-\alpha} \VHat(p) \leqslant \limsup_{p \rightarrow 0} |p|^{-\alpha} \VHat(p) < \infty
\end{equation}
Let $X \sim \text{SRBP}(V, \beta, \pi)$, then in the Tauberian sense, it holds that as $t \rightarrow \infty$,
$$ t^{(5 - 2\alpha + \alpha^2)/4} \lesssim  \bbE[ X_t^2 ] \lesssim t^{(3-\alpha)/2} $$
This is to say that the conclusion holds at the level of the Laplace transform: for $\DHat(\lambda) := \int_0^\infty e^{-\lambda t} \bbE[ X_t^2 ] \dd t$, it holds that as $\lambda \rightarrow 0$
\begin{equation*}
\lambda^{-(9 - 2\alpha + \alpha^2)/4} \lesssim  \DHat(\lambda) \lesssim \lambda^{-(5-\alpha)/2}
\end{equation*}
\end{theorem}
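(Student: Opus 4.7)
The plan is to work first in the smooth (mollified) setting, establish Laplace-transform bounds for the approximation $X^\varepsilon \sim \text{SRBP}(V^\varepsilon, \beta, \pi(V^\varepsilon))$ which are uniform in $\varepsilon$, and then transfer them to the limit $X \sim \text{SRBP}(V,\beta,\pi)$ via Theorem \ref{thm:main}. In the smooth case I would follow the Kipnis--Varadhan / $H_{-1}$ approach of \cite{TarrTothValk12_DiffusivityBounds}. I would introduce the environment process seen from the walker, $\eta^\varepsilon_t(x) := \omega^\varepsilon(X^\varepsilon_t+x) + \beta \int_0^t (V^\varepsilon)'(X^\varepsilon_t+x-X^\varepsilon_s)\,\dd s$, which is a Markov process on $\calS'(\bbR)$ stationary under $\pi(V^\varepsilon)$ (the content of Lemma \ref{lem:2}). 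In these variables \eqref{eq:3} becomes $\dd X^\varepsilon_t = \dd B_t - \beta\, \phi(\eta^\varepsilon_t)\,\dd t$ with the evaluation functional $\phi(\eta) := \eta(0)$, and I would decompose the corresponding generator as $L_\varepsilon = S_\varepsilon + A_\varepsilon$ into its $L^2(\pi(V^\varepsilon))$-symmetric and antisymmetric parts.

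The standard Kipnis--Varadhan reduction then expresses
\[
\DHat_\varepsilon(\lambda) = \lambda^{-2} + 2\beta^2 \lambda^{-2} \langle \phi, (\lambda - L_\varepsilon)^{-1} \phi \rangle + \text{(cross terms)},
\]
with the inner product in $L^2(\pi(V^\varepsilon))$ and the cross terms handled by Cauchy--Schwarz. The upper bound is the easy direction: antisymmetry of $A_\varepsilon$ gives $\langle \phi, (\lambda - L_\varepsilon)^{-1} \phi \rangle \leqslant \langle \phi, (\lambda - S_\varepsilon)^{-1} \phi \rangle$, and diagonalising $S_\varepsilon$ in the Fock/Wick-chaos decomposition associated to $\pi(V^\varepsilon)$ reduces the evaluation on the first chaos to a Fourier multiplier of the form $(\lambda + \tfrac{1}{2}p^2)^{-1}$ integrated against the spectral measure $\VHat^\varepsilon(p)\,\dd p$. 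Using the infrared asymptotics $\VHat(p) \asymp |p|^\alpha$, preserved under mollification, this gives $\langle \phi, (\lambda - S_\varepsilon)^{-1} \phi \rangle \lesssim \lambda^{-(1-\alpha)/2}$ and hence the upper bound $\DHat_\varepsilon(\lambda) \lesssim \lambda^{-(5-\alpha)/2}$.

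The main obstacle is the matching lower bound. Following \cite{TarrTothValk12_DiffusivityBounds} I would apply one iteration of the resolvent identity,
\[
\langle \phi, (\lambda - L_\varepsilon)^{-1} \phi \rangle \geqslant \bigl\langle \phi, \bigl(\lambda - S_\varepsilon + A_\varepsilon(\lambda - S_\varepsilon)^{-1} A_\varepsilon \bigr)^{-1} \phi \bigr\rangle,
\]
and then bound the effective symmetric operator on the first chaos from above. Because $A_\varepsilon$ is an off-diagonal ladder operator between chaos levels $n$ and $n\pm 1$, its kernel restricted to the first chaos can be written as an explicit Fourier double integral which, combined with the infrared behaviour of $\VHat$, contributes an additional factor $\lambda^{-(1-\alpha)/2}$. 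This produces $\langle \phi, (\lambda - L_\varepsilon)^{-1} \phi \rangle \gtrsim \lambda^{-(1-\alpha)^2/4}$ and hence $\DHat_\varepsilon(\lambda) \gtrsim \lambda^{-(9-2\alpha+\alpha^2)/4}$. The combinatorial estimates controlling the off-diagonal chaos coupling of $A_\varepsilon$, essentially a careful Cauchy--Schwarz in the correct Hermite basis, are the technically heaviest step, but since $V^\varepsilon$ is smooth they should follow \cite[Sections 3--4]{TarrTothValk12_DiffusivityBounds} nearly verbatim.

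Finally, I would transfer the two bounds, which are uniform in $\varepsilon$, to the limit. Convergence in law from Theorem \ref{thm:main} together with the uniform second-moment control obtained from the upper bound yields uniform integrability of $(X^\varepsilon_t)^2$, and hence pointwise convergence $\bbE[(X^\varepsilon_t)^2] \to \bbE[X_t^2]$ for each $t$; the inequalities on $\DHat_\varepsilon$ then pass to $\DHat$ by dominated convergence in $\lambda$, and the time-domain conclusions follow by a standard Karamata-type Tauberian argument.
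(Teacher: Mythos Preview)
Your core analytic argument---the upper bound via the symmetric part $\langle \phi,(\lambda-S_\varepsilon)^{-1}\phi\rangle$ and the lower bound via one resolvent iteration, both reduced to Fourier integrals against $\VHat$---is exactly what the paper does. The paper, however, carries these estimates out \emph{directly for the limiting generator} $\calL$ acting on the Fock space built over $\pi(V)$, not for the mollified $L_\varepsilon$. The identity
\[
\int_0^\infty e^{-\lambda t}\,\bbE\Big[\Big(\int_0^t f(\eta_s)\,\dd s\Big)^2\Big]\,\dd t
= 2\lambda^{-2}\langle f,(\lambda-\calL)^{-1}f\rangle
\]
is stated to persist in the limit (with $f=\delta_0$), and the variational formula for the lower bound is applied to $\calL$ itself; no uniform-in-$\varepsilon$ step or limit passage is needed. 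Your detour through $\varepsilon>0$ is legitimate but adds work you do not fully justify: uniform integrability of $(X^\varepsilon_t)^2$ does \emph{not} follow from a uniform second-moment bound alone---you would need uniform higher moments, which are available from the It\^o-trick estimates of Proposition~\ref{prp:tight}, not from the Laplace upper bound.

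One further point: the paper does not ``handle the cross terms by Cauchy--Schwarz'' but uses Yaglom reversibility to conclude that the Brownian increment and the additive functional in \eqref{Xfrometa} are \emph{uncorrelated}, so that exactly $D(t)=t+\beta^2\,\bbE[(\int_0^t\eta_s(0)\,\dd s)^2]$. Your Cauchy--Schwarz treatment would still give the stated exponents (the cross term is of lower order once one knows the resolvent quantity diverges as $\lambda\to0$), but this requires an extra line of argument that you have not written down.
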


One further contribution of this paper is a characterisation of the limiting law, $\text{SRBP}(V, \beta, \pi_0)$. Before we describe this characterisation, we must introduce the environment process.

\subsection{The environment process and the law of the SRBP}
Although the process $(X_t)_{t \geqslant 0}$ given by \eqref{eq:13} is not a Markov process on $\bbR$, the coupled process $(X_t, L_t)$ is a Markov process, but on a larger state space, $\bbR \times \calS'(\bbR)$. This allows us to use Markov process theory. It turns out to be more convenient to put the coupled process together into a single $\calS'(\bbR)$-valued Markov process, known as {\it the environment seen by the particle}, or simply {\it the environment process}. Introducing this process is a standard tool, originating in the theory of random walks in random environments. Its definition, in the case of general interaction function $V \in \calS'(\bbR)$, is given according to
\begin{equation}\label{eq:51}
\eta_t(x) := \omega(x+X_t) + \beta \Big(\int_0^t \nabla V(x+X_t-X_s) \dd s\Big)
\end{equation}

The process $\eta$ is only a priori defined in the smooth setting. In fact, in the smooth setting, one can show that $(\eta_t)_{t \geqslant 0}$ is supported on the space of smooth fields, $\calW$, whose definition is given in \eqref{eq:7}. The SRBP can be written in terms of both the Brownian motion and the environment in the following way:
\begin{equation}\label{eq:1}
X_t = B_t - \beta \int_0^t f(\eta_s) \dd s
\end{equation}
where $f \colon \calW\to\bbR$ is given by $f(\omega) = \omega(0)$. In particular, we can study $X$ via $\eta$, and much of the analysis of this paper is done at the level of $\eta$.

An application of Itô's formula (in the smooth setting) yields that $(\eta_t)_{t \geqslant 0}$ is a solution of the following SPDE:
\begin{equation}\label{eq:spde}
\dd \eta_t(x) = \thalf \Delta \eta_t(x) \dd t + \nabla \eta_t(x) \dd B_t - \beta \Big( \nabla \eta_t(x) \eta_t(0) - \nabla V(x)\Big) \dd t, \quad \eta_0=\omega
\end{equation}
To give a meaning to $X$ in the singular case, $V \in \calS'(\bbR)$, we begin by giving a meaning to \eqref{eq:spde}. Then we will prove Theorem \ref{thm:main} by showing that the identity \eqref{eq:1} carries through.

We do not expect to find a solution $(\eta_t)_{t \geqslant 0}$ of \eqref{eq:spde} taking place in a space of smooth fields. Indeed, under the canonical choice $V = \delta$, the equation depends additively on the singular term $\nabla\delta$. Granting that we ought to look for an $\calS'(\bbR)$-valued solution, the right hand side is a priori meaningless: it involves the product $\nabla \omega(x) \omega(0)$, which is not well-defined for $\omega \in \calS'(\bbR)$. In other words, not only is there no classical way to construct a solution for \eqref{eq:spde}, but given an $\calS'(\bbR)$-valued process $(\eta_t)_{t \geqslant 0}$, one cannot even compute the RHS of the equation (even in a weak sense) and thereby check whether it is satisfied.

We approach the problem from the perspective of \textit{Energy Solutions}. Energy Solutions for singular SPDEs were first introduced in \cite{GoncalvesJara14_NonlinearFluctuations} and later refined in \cite{GubinelliJara13}, however uniqueness of such solutions remained open until the work \cite{GubinelPerkows18_EnergySolutions}. In \cite{GubinelPerkows20_InfinitesimalGenerator} a more general approach towards uniqueness, based on the analysis of the generator of the underlying equation, was introduced and consequently applied in \cite{GubinelliTurra20,LuoZhu21}. This approach was then further developed and now allows to establish weak well-posedness of energy solutions for certain scaling-critical or -supercritical SPDEs/SDEs \cite{Graefner21,GraefnerPerkowski24,Graefner24,GrafPerkPopa24_EnergySolutions,Graefner25}.

Rescaling \eqref{eq:spde} diffusively in the case $V=\delta_0$, reveals that it is subcritical in the sense of pathwise approaches to singular SPDEs such as \textit{regularity structures} \cite{Hairer2014} and \textit{paracontrolled calculus} \cite{Gubinelli2015Paracontrolled}, making it, in principle, amenable to such techniques. However, due to the unusual structure of the singular non-linearity $\nabla \eta \eta (0)$ and the presence of the rough transport noise term $\nabla\eta\mathd B$, the solvability in this sense does not seem to be a trivial consequence of existing works. Moreover, let us point out three advantages of energy solutions compared to these pathwise methods. First, the pathwise techniques are limited to subcritical equations, whereas energy solutions can treat some scaling-critical or even -supercritical examples, as cited above. This becomes relevant for the SRBP in $d\geqslant 2$ (see Remark \ref{rmk:criticality}). Second, the approach via energy solutions is probabilistic which makes it more natural to derive properties of the law of the process. Finally, the characterisation of solutions is easier to implement from the point of view of discrete systems, see \cite{GoncalvesJara14_NonlinearFluctuations} and followup works, and would be a natural choice if one wished to show convergence, say, of the ``true'' self-avoiding walk to the SRBP under the so-called weak-coupling scaling.

In this paper, we introduce a notion of energy solutions in Definition \ref{def:energysolution}. In short, we require $(\eta_t)_{t \geqslant 0}$ to solve the martingale problem associated with the SPDE \eqref{eq:spde} in the case of only linear and quadratic functionals of the environment. To make sense of the RHS of \eqref{eq:spde}, we also impose an integrability condition for additive functionals of the path, known as the Itô trick. See Section \ref{sec:definition-ito-trick} for further explanation. We show that this notion is well posed, and that it can be obtained as a limit of the mollified problem:

\begin{theorem}\label{thm:env}
Under the assumption of Theorem \ref{thm:main}, let $\text{ENV}(V^\varepsilon, \beta, \pi^\varepsilon_0)$ be the law on the path space $C(\bbR_{\geqslant 0}, \calS'(\bbR))$ of the environment process, $(\eta^\varepsilon_t)_{t \geqslant 0}$ under the choice of interaction $V^\varepsilon$ and environment distribution $\pi^\varepsilon_0$. Then $\text{ENV}(V^\varepsilon, \beta, \pi^\varepsilon_0)$ converges as $\varepsilon \rightarrow 0$ to a limiting law that is independent of the choice of $\rho$. The limit, which we call $\text{ENV}(V, \beta, \pi_0)$, satisfies the following properties.
\begin{itemize}
\item It holds that $\eta \sim \text{ENV}(V, \beta, \pi_0)$ is a Markov process.
\item The choice $\pi_0 = \pi$ is time-stationary: if $\eta \sim \text{ENV}(V, \beta, \pi)$, then $\eta_t \sim \pi$ for all $t \geqslant 0$.
\item The measure $\text{ENV}(V, \beta, \pi_0)$ is the unique distribution satisfying the notion of energy solution, as given in Definition \ref{def:energysolution}.
\end{itemize}
\end{theorem}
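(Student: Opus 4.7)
The plan is to follow the standard energy-solution program. First, I would establish tightness of the family $\{\text{ENV}(V^\varepsilon, \beta, \pi^\varepsilon_0)\}_{\varepsilon > 0}$ on $C(\bbR_{\geqslant 0}, \calS'(\bbR))$ via a Mitoma-type criterion, reducing to tightness of $(\langle \eta^\varepsilon_t, \varphi\rangle)_{t \geqslant 0}$ for each $\varphi \in \calS(\bbR)$. The decisive a priori estimate is a uniform Itô trick of the form
$$ \bbE_{\pi^\varepsilon_0}\bigg[ \sup_{t \leqslant T} \bigg| \int_0^t F(\eta^\varepsilon_s) \, \mathrm{d} s \bigg|^2 \bigg] \lesssim T \, \|F\|_{-1}^2, $$
derived from invariance of $\pi^\varepsilon$ under the mollified dynamics (Lemma \ref{lem:2}) together with the hypothesis $\mathrm{d}\pi_0/\mathrm{d}\pi \in \vfock$, which lets one pass from $\pi^\varepsilon$ to $\pi^\varepsilon_0$ by Cauchy–Schwarz. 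The local integrability condition \eqref{eq:63} of Assumption \ref{ass:posdef} is precisely what keeps the $(-1)$-norm of the singular drift term $\nabla V$ uniformly controlled in $\varepsilon$, so that the a priori bound survives mollification.

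Second, I would identify any subsequential limit $P$ as an energy solution in the sense of Definition \ref{def:energysolution}. The martingale relations at $\varepsilon > 0$, tested against linear and quadratic cylinder functionals, pass to the limit by continuity of the linear (Ornstein–Uhlenbeck and transport) parts and by rewriting the singular quadratic drift $\eta(0)\nabla \eta$ as an additive functional controlled by the Itô-trick inequality above; the bound is then preserved in the limit by lower semicontinuity. The quadratic variation of the martingale part is linear in the one-time marginal against the reference measure, so it also passes to the limit.

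The main obstacle is uniqueness of energy solutions, which is what promotes subsequential convergence to convergence of the full sequence and simultaneously gives mollifier-independence. Here I would follow the generator strategy developed in \cite{GubinelPerkows20_InfinitesimalGenerator,GraefnerPerkowski24}: decompose the state space via the Fock structure $\vfock = \bigoplus_n \vfocki{n}$, identify the energy-solution generator $\calL$ on a dense algebra of cylinder functions, and solve the resolvent equation $\lambda u - \calL u = g$ in a graded Sobolev scale adapted to this decomposition. The crucial analytic step is a graded commutator estimate showing that the nonlinear piece of $\calL$, coming from the singular drift, maps the $n$-particle chaos into $(n \pm 1)$-particle chaoses with a norm closable against the Ornstein–Uhlenbeck generator on each level; subcriticality in $d = 1$ is exactly what makes the scheme converge. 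With the resolvent constructed, uniqueness of the martingale problem follows by a standard duality argument.

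Finally, the remaining assertions are easy consequences. The Markov property follows from the fact that uniqueness of energy solutions holds for every initial law in the admissible class $\{\pi_1 \ll \pi : \mathrm{d}\pi_1/\mathrm{d}\pi \in \vfock\}$; conditioning on $\eta_s$ (which stays in this class under $\pi_0 = \pi$, and can be checked to stay in this class more generally) and invoking uniqueness for the shifted problem yields the Markov relation. Time-stationarity under $\pi_0 = \pi$ follows from the fact that $\pi^\varepsilon$ is invariant for the mollified dynamics, so $\eta^\varepsilon_t \sim \pi^\varepsilon$ for every $t \geqslant 0$; taking $\varepsilon \to 0$ in this one-time marginal identity gives $\eta_t \sim \pi$ for the limit process, completing the theorem.
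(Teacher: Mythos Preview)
Your overall architecture---tightness via Mitoma and a uniform It\^o trick, identification of subsequential limits as energy solutions, then uniqueness to upgrade to full convergence---matches the paper's program, and your treatment of stationarity under $\pi$ is exactly right. But two points deserve correction.

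First, a structural gap. The energy solution of Definition~\ref{def:energysolution} only asserts the martingale problem for \emph{linear and quadratic} observables, not for all cylinder functions. Your uniqueness sketch says ``identify the energy-solution generator $\calL$ on a dense algebra of cylinder functions'' as if this were immediate, but it is not: passing from the quadratic martingale problem to the full cylinder-function martingale problem is the content of Theorem~\ref{thm:etoc}, and it requires a genuine argument. One cannot simply apply It\^o's formula to $h(\eta_t(\phi_1),\ldots,\eta_t(\phi_k))$, because the additive functional $\int_0^t \calA\ell_\phi(\eta_s)\,\dd s$ is not of finite variation. The paper handles this by an approximation in which the singular drift is cut off in $-\calL_0$, and the key step (Lemma~\ref{Lemmaforeimpliesc}) is a H\"older estimate on these approximating additive functionals allowing Young integration; this is where the requirement $p_0>4$ in the It\^o trick enters. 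Without this bridge, the duality argument for uniqueness cannot get started.

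Second, two smaller discrepancies. The symmetric part $\calL_0$ here is \emph{not} an Ornstein--Uhlenbeck generator but the generator of stochastic linear transport, acting on the $n$-th chaos as multiplication by $-\tfrac12|p_{[1:n]}|^2$ (the square of the sum, not the sum of squares); this degeneracy is emphasised in the paper and affects the graded estimates. And for uniqueness the paper does not solve the resolvent equation $(\lambda-\calL)u=g$ as in \cite{GubinelPerkows20_InfinitesimalGenerator}; instead it solves the time-dependent Kolmogorov backward equation $\partial_t F=\calL F$ in $\calX(V)=L^2_T\vsob{1}{2}\cap H^1_T\vsob{-1}{0}$ (Theorem~\ref{existenceKBE}), using the graded sector condition together with a commutator-and-Gronwall argument, and then obtains uniqueness and the Markov property simultaneously from the duality identity $\bbE[F_0(\eta_T)]=\bbE[F(T,\eta_0)]$. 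Your resolvent route is in the same spirit and would presumably also work, but the paper's PDE version is what is actually carried out, and it avoids having to iterate resolvent bounds across chaos levels.
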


The benefit of this characterisation is that one only needs to check the first two moments of a process $(\eta_t)_{t \geqslant 0}$ and also an integrability condition to verify whether its law is given by $\text{ENV}(V, \beta, \pi_0)$. For the statement of uniqueness in Theorem \ref{thm:env}, our proof relies on a new PDE-version of the semigroup-based arguments from the scaling-critical setting of \cite{GrafPerkPopa24_EnergySolutions} which was developed in \cite{Graefner25}. Let us also point out that the equation studied in the present work differs in structure from the above examples that were previously treated with the energy solutions approach. Indeed, \eqref{eq:spde} contains a transport-noise term in contrast to the ``usual'' additive noise which results in a much more degenerate regularising part $\calL_0$ of the generator (see \eqref{eq:6}). Also, the singular term $\nabla\eta\, \eta(0)$ is structurally different from multiplication-type singularities that one typically considers in singular SPDEs.

\medskip

Our characterisation of the law $\text{SRBP}(V, \beta, \pi_0)$ is given in terms of $\text{ENV}(V, \beta, \pi_0)$. More precisely, we say that a $C(\bbR_{\geqslant 0}, \bbR)$-valued random variable $X$ is an \textit{energy solution} to the SDE \eqref{eq:3} if $X_0=0$ and the $\calS'(\bbR)$-valued process $\eta$ defined according to \eqref{eq:51} is an energy solution to \eqref{eq:spde}.  We obtain (see Theorem \ref{wellpsoednessenergysolutionsX} for the full version):
\begin{theorem}\label{thm:main2}
Under the assumptions of Theorem \ref{thm:main}, further assume that for some $\alpha \in (-1, \infty),\alpha^\ast\in (-\infty,\infty)$ such that $\alpha^\ast<2+\alpha$, it holds that
\begin{equation*}
0 < \liminf_{p \rightarrow 0} |p|^{-\alpha^\ast} \VHat(p) \leqslant \limsup_{p \rightarrow 0} |p|^{-\alpha} \VHat(p) < \infty.
\end{equation*}
Then it holds that $\text{SRBP}(V, \beta, \pi_0)$ is the unique energy solution to the SDE \eqref{eq:3}.
\end{theorem}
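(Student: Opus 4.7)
The plan is to split the proof into existence and uniqueness, both routed through Theorem \ref{thm:env}.

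\textbf{Existence.} In the smooth setting, a direct application of It\^o's formula to the map $X^\varepsilon \mapsto \eta^\varepsilon$ defined by \eqref{eq:51} shows that $\eta^\varepsilon$ is a classical (hence energy) solution of the mollified version of \eqref{eq:spde}. Combining Theorem \ref{thm:main} ($X^\varepsilon \Rightarrow X \sim \text{SRBP}(V,\beta,\pi_0)$) with the convergence statement in Theorem \ref{thm:env} ($\eta^\varepsilon \Rightarrow \text{ENV}(V,\beta,\pi_0)$), and checking that the map \eqref{eq:51} is stable under this joint convergence—using uniform estimates on $\int_0^t \nabla V^\varepsilon(\cdot + X_t^\varepsilon - X_s^\varepsilon) \dd s$ obtained via the It\^o trick at the level of $\eta^\varepsilon$—one concludes that the $\eta$ built from the limit $X$ through \eqref{eq:51} has law $\text{ENV}(V,\beta,\pi_0)$. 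By the definition of energy solution for $X$, this shows that $\text{SRBP}(V,\beta,\pi_0)$ is an energy solution.

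\textbf{Uniqueness.} Let $X$ be an arbitrary energy solution and let $\eta$ be the associated environment process given by \eqref{eq:51}, with initial data $\omega=\eta_0 \sim \pi_0$. By definition, $\eta$ is an energy solution of \eqref{eq:spde}, so by the uniqueness part of Theorem \ref{thm:env}, $\law(\eta) = \text{ENV}(V,\beta,\pi_0)$. The task is to propagate this to $\law(X)$ via the identity \eqref{eq:1}, which reads $X_t = B_t - \beta \int_0^t f(\eta_s) \dd s$ with $f(\omega) = \omega(0)$. The driving Brownian motion $B$ can be recovered as a measurable functional of $\eta$: from the transport-noise structure of \eqref{eq:spde}, the martingale part of $\langle \eta_t, \phi \rangle$ equals $-\int_0^t \langle \eta_s, \nabla \phi \rangle \dd B_s$ for any test function $\phi$, and a Kunita-Watanabe / martingale representation argument identifies $B$ uniquely (up to null sets) from the family of these martingales. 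Once $B$ and the integral are identified, the relation $X_t = B_t - \beta \int_0^t f(\eta_s) \dd s$ determines $\law(X)$ as a deterministic functional of $\law(\eta)$, yielding $\law(X) = \text{SRBP}(V,\beta,\pi_0)$.

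\textbf{Main obstacle.} The hardest part is giving meaning to the additive functional $\int_0^t f(\eta_s) \dd s$: since $\eta_s \in \calS'(\bbR)$, the pointwise evaluation $f(\omega)=\omega(0)$ is not classically defined, so this integral must be realised as an $L^2$ limit of smooth approximations via an It\^o-trick argument at the level of the Markov process $\eta$. This is exactly where the sharpened infrared condition $\alpha^\ast < 2 + \alpha$ of Theorem \ref{thm:main2} enters: it is precisely what is needed to place $f$ in a suitable negative-regularity space (determined by $\VHat$) compatible with the quadratic form of the symmetric part of the generator of $\eta$, so that the It\^o trick furnishes uniform estimates on $\int_0^t F^\delta(\eta_s) \dd s$ for smooth approximants $F^\delta \to f$ and yields a well-defined limit depending only on $\law(\eta)$. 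This is the essential input that makes the uniqueness argument above go through.
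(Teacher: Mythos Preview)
Your existence sketch is broadly in line with the paper. The uniqueness argument, however, has a real gap, and you have misidentified where the infrared condition is used.

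\textbf{The gap.} You write: ``Once $B$ and the integral are identified, the relation $X_t = B_t - \beta\int_0^t f(\eta_s)\,\dd s$ determines $\law(X)$.'' But this relation is exactly what is \emph{not} known for an arbitrary energy solution $X$. The definition of energy solution for $X$ (Definition~\ref{defenergysolutionsX}) only says that $\eta = \calE^V(\omega,X)$ given by \eqref{eq:etaFromX} is an energy solution of the SPDE. It does \emph{not} assert that $X$ is a semimartingale, let alone that it decomposes as $X = \calX^V(\eta)$ via \eqref{Xfrometa}. The identity $X_t = B_t - \beta\int_0^t f(\eta_s)\,\dd s$ is the \emph{definition} of the forward map $\calX^V$, and the whole content of uniqueness is precisely to show that $\calX^V \circ \calE^V = \mathrm{id}$, i.e.\ that $X$ can be recovered from $\eta$. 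You have assumed this, not proved it.

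The paper's route is quite different: it tests \eqref{eq:etaFromX} against a family $f^m(x) = m f(x/m)$ with $\supp\fHat \subset [-1,1]$, performs a Taylor expansion, and obtains an explicit formula
\[
X_t = F_t^m(\eta) + \frac{\eta_0(R_t^m) - \int_0^t V(Q_{s,t}^m)\,\dd s}{\eta_0(\nabla f^m)},
\]
where $F_t^m$ is a functional of $\eta$ alone. The proof then shows (Lemmas~\ref{vanishingRm}, \ref{vanishingQm}) that the error terms vanish as $m\to\infty$ along a subsequence, so $X_t$ is determined by $\law(\eta)$.

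\textbf{Where the infrared condition enters.} Your claim that $\alpha^\ast < 2+\alpha$ is needed to place $f(\omega)=\omega(0)$ in the right negative-regularity space for the It\^o trick is incorrect: the paper shows $\|f\|_{\vsob{-1}{0}}^2 = \int \VHat(p)(1+\tfrac12|p|^2)^{-1}\,\dd p < \infty$ follows from Assumption~\ref{ass:posdef} alone, with no infrared hypothesis. The infrared condition is instead used in the error analysis above: the $\limsup$ bound ($\alpha$) controls the pathwise Besov regularity of $\eta_0$ at large scales (Lemma~\ref{regularityeta}) and the decay of $R^m,Q^m$; the $\liminf$ bound ($\alpha^\ast$) ensures that the variance of the denominator $\eta_0(\nabla f^m)$ diverges fast enough to beat the numerator. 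The constraint $\alpha^\ast < 2+\alpha$ is exactly the balance between these two rates.
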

The proof of this result relies on the idea that the maps \eqref{eq:1} and \eqref{eq:51} are essentially inverses of each other so that the uniqueness of $\text{ENV}(V, \beta, \pi_0)$ translates to the uniqueness of $\text{SRBP}(V, \beta, \pi_0)$. Making this precise is non-trivial due to the singularity of our setting and the corresponding arguments are new, to the best of our knowledge.

\paragraph{Structure of the paper.} In Section \ref{sec:background}, we give some background on Wiener space analysis, and recap the theory of the SRBP in the smooth setting. In Section \ref{sec:constr-limit-gener} we define the operator $\calL$ that is a natural candidate for the generator of the limiting environment $(\eta_t)_{t \geqslant 0}$, and we give some a priori estimates. Following this, we give the notion of energy solution for $\calL$, and an additional notion of solution which we refer to as the cylinder function martingale problem. We finish this section by showing that energy solutions also solve the cylinder function martingale problem. In Section \ref{sec:existenceenergy}, we show that subsequential limits of the mollified process are energy solutions, thereby settling the problem of existence. In Section \ref{sec:uniq-energy-solut}, we prove uniqueness, from which it follows that the mollified process converges fully (not just along subsequences). Finally, in Section \ref{sec:limit-srbp}, we translate our results on the environment $\eta$ to the process $X$: we give a notion of energy solution for $\text{SRBP}(V, \beta, \pi_0)$; we show that this notion is unique, and that that the mollified SRBP $X^\varepsilon$ converges to it. We finish this section by giving a superdiffusivity result for $\text{SRBP}(\beta, V, \pi)$.

\section{Background}
\label{sec:background}
In this section, we begin by recalling some facts about Wiener space analysis, then we recapitulate results for the smooth setting.

\subsection{Notation and Wiener space analysis}\label{sec:notat-wien-space}
For an element $V \in \calS'(\bbR)$ we define the Fourier transform according to $\VHat(p) := \int_{\bbR} e^{-\iota p x} V(x) \dd x$. In general, this defines a distribution $\VHat \in \calS'(\bbR)$, but we shall only consider $V \in \calS'(\bbR)$ for which $\VHat \in L^1_{\text{loc}}(\bbR)$.

\paragraph{The Gaussian measure $\pi$.} We let $\calF = \sigma(\omega(\phi) : \phi \in \calS)$ be the cylindrical $\sigma$-algebra on $\calS'(\bbR)$. For $V \in \calS'(\bbR)$ satisfying Assumption \ref{ass:posdef}, let $\pi = \pi(V)$ be the unique measure on $(\calS'(\bbR), \calF)$ such that $\{ \omega(\phi) : \phi \in \calS \}$ is a mean zero Gaussian process with the following covariance structure
\begin{equation}\label{eq:5}
\int\omega(\phi)\omega(\psi)\pi(\dd \omega)= \int \VHat(p) \phiHat(p) \overline{\psiHat(p)} \dd p, \,\qquad \phi, \psi \in \calS
\end{equation}
The existence of such a measure is given by the Bochner-Minlos theorem, see \cite{Hida80_BrownianMotion}. For $p \geqslant 1$ we write $L^p(V)$ for the space $L^p(\pi(V), \calF)$.

\paragraph{Cylinder functions.} We say that a function $f : \calS'(\bbR) \rightarrow \bbR$ is a cylinder function if it is of the form
\begin{equation}\label{eq:16}
f(\omega) = f(\omega(\phi_1),\ldots,\omega(\phi_k))
\end{equation}
for test functions $\phi_i \in \calS(\bbR)$, and measurable function $f : \bbR^k \rightarrow \bbR$. Note that we abuse notation by writing $f$ for either the function on $\calS'(\bbR)$ or the function on $\bbR^k$. We let $\calC$ be the vector space of cylinder functions in which $f \in \calS(\bbR^k)$ is a Schwartz function, and $\calC_p$ for the case when $f \in C^\infty(\bbR^k)$ is of at most polynomial growth, and all of the derivatives are also of at most polynomial growth. It is clear that $\calC \subset \calC_p \subset L^p(V)$ for all $p \geqslant 1$, and furthermore $\calC \subset L^\infty(V)$.

Similarly, we say that $f : \bbR \times \calS'(\bbR) \rightarrow \bbR$ is a time-dependent cylinder function if it is of the form $f(t, \omega) = f(t, \omega(\phi_1),\ldots,\omega(\phi_k))$, for $f : \bbR^{k+1} \rightarrow \bbR$, and we write $t\calC$ (resp. $t\calC_p$) for the case in which $f \in \calS(\bbR^{k+1})$ (resp. $f \in C^\infty(\bbR^{k+1})$ is of at most polynomial growth, and all of the derivatives are also of at most polynomial growth).

\paragraph{Index notation.} We write $1{:}n$ for the set $\{1, \ldots, n\}$. For $(p_1, \ldots, p_n) \in \bbR^n$, and for a subset $A \subset 1{:}n$, we define $ p_A := (p_i : i \in A)$ and $p_{[A]} = \sum_{i \in A} p_i$.

\paragraph{Chaos decomposition.} We consider the chaos decomposition $L^2(V) = \oplus_{n=0}^\infty H_n$. We use the convention that an infinite direct sum of Hilbert spaces is the closure of finite linear combinations. Roughly speaking, $H_n \subset L^2(V)$ is the subspace of degree $n$ polynomials in the Gaussian random variables after having orthogonalised out all lower degree polynomials. We give a brief summary here; see \cite{Janson97_GaussianHilbert} for the full details.

Elements of $H_n$ can be represented via their ``kernels''. That is, $H_n \cong K_n$ where $K_n$ is an appropriate space of symmetric functions, via the map
\begin{equation}\label{eq:44}
K_n \rightarrow H_n, \qquad k \mapsto \int_{\bbR^n} k(x_{1:n}) \, {:} \omega(x_1) ... \omega(x_n) {:} \, \dd x_{1:n}
\end{equation}
in which we have written ${:} \xi_1 \ldots \xi_n {:}$ for the Wick product of Gaussian random variables $\xi_1, \ldots, \xi_n$. In general, $K_n$ will contain elements $k \in K_n$ that are not symmetric functions, but distributions. We consider a further space $\vfocki{n}$, which we refer to as the Fock space of degree $n$, which is obtained by taking the Fourier transform $k \mapsto \kHat$. The space $\vfocki{n}$ is a Hilbert space of symmetric functions, and we describe it now in a little more detail.

Let $\calE_n$ be the vector space of measurable functions $\psi : \bbR^n \rightarrow \bbC$ that are symmetric, and satisfy $\psi(-p) = \overline{\psi(p)}$. We define the measure space $(\bbR^n, \mu_n)$ according to $\mu_n(\dd p_{1:n}) = n! \big(\prod_{i=1}^n \VHat(p_i)\big) \dd p_{1:n}$. Then it holds that
$$ \vfocki{n} = \{\psi \in \calE_n : \lVert \psi \rVert_{L^2(\mu_n)} < \infty\} $$
In particular, the inner product for $\psi,\psi^\prime \in \vfocki{n}$ is given by
$$ \langle\psi,\psi^\prime\rangle = n! \int \big(\prod_{i=1}^n \VHat(p_i)\big) \psi(p_{1:n}) \overline{\psi^\prime(p_{1:n})} \dd p_{1:n} $$

The isomorphism described above, in which $H_n \cong K_n \cong \vfocki{n}$, ensures that $L^2(V) \cong \oplus_{n=0}^\infty \vfocki{n}$. That is to say, we may identify any element $f \in L^2(V)$ with its chaos decomposition $(\psi_n)_{n \geqslant 0}$. Note that if $f \in \calS$ is a cylinder function, then the kernels $\psi_n$ belong to $\calS(\bbR^n, \bbC)$.

\paragraph{Definition of $\calL_0$ and $\calN$.} We define two important self-adjoint operators by specifying their action on $\calE_n$ for arbitrary $n \in \bbN$. Let $\calN : \calE_n \rightarrow \calE_n$ be multiplication by $n$, and define $\calL_0 : \calE_n \rightarrow \calE_n$ according to
\begin{equation}\label{eq:6}
\calL_0 \psi(p_{1:n}) = -\thalf |p_{[1:n]}|^2 \psi(p_{1:n})
\end{equation}
For completeness, let $\calN, \calL_0$ annihilate $\calE_0$. They are both multiplication operators, and so it is clear that they commute. Note that $\calL_0$ is given by the square of the sum, not the sum of the squares. In other words, $\calL_0$ is {\it not} the second quantized Laplacian.

The operator $\calL_0$ is the generator of the Stochastic Linear Transport Equation\footnote{Also known as the {\it diffusion in random scenery.}}:
\begin{equation}\label{eq:14}
\dd \eta_t(x) = \thalf \Delta \eta_t(x) \dd t + \nabla \eta_t(x) \dd B_t
\end{equation}
The solution of equation \eqref{eq:14} is simply given by the transport $\eta_t(x) = \eta_0(x + B_t)$. The transport nature of this equation can also be emphasised by writing the RHS in Stratonivich form, in which case it becomes $\nabla \eta_t(x) \circ \dd B_t$.

We defined these operators on the Fock space, but their action on cylinder functions can also be given explicitly. Indeed, in the case of $\calL_0$, applying Itô's formula on \eqref{eq:14} to a nice enough cylinder function $f$ of the form \eqref{eq:16}, yields
\begin{equation}\label{eq:47}
\calL_0 f = \thalf \sum_{i = 1}^k \partial_i f(\omega(\phi)) \omega(\Delta\phi_i) + \sum_{i,j = 1}^k \partial_{ij}^2 f(\omega(\phi)) \omega(\nabla\phi_i)\omega(\nabla\phi_j)
\end{equation}
As for $\calN$, we have the identity $\calN f = \delta(D f)$, where $D$ and $\delta$ are the Malliavin derivative and divergence respectively \cite[Theorem 15.145]{Janson97_GaussianHilbert}. In particular, this gives the explicit expression
$$ \calN f = \sum_{i = 1}^k \partial_if(\omega(\phi)) \omega(\phi_i) + \sum_{ij = 1}^k \partial^2_{ij} f(\omega(\phi)) \int \VHat(p) \phiHat_i(p) \phiHat_j(p) \dd p$$

From these expressions we conclude that $\calN, \calL_0$ are closed on $\calC_p$, and in fact $\calN$ is closed on the smaller space $\calC$. Furthermore, the action of $\calL_0$ does not depend on the choice of $V$, whereas the action of $\calN$ does.

\paragraph{Definition of the Sobolev spaces $\vsob{s}{\alpha}$.} We define the spaces $\vsob{s}{\alpha}$ for $s, \alpha \in \bbR$ which measure the regularity of kernels in terms of the operators $\calN, \calL_0$. We define, for $n \in \bbN$,
\begin{align*}
\vsobi{s}{\alpha}{n} &:= \{ \psi \in \calE_n : \lVert \psi \rVert_{\vsobi{s}{\alpha}{n}} < \infty\} \\
\lVert \psi \rVert_{\vsobi{s}{\alpha}{n}} &:= \lVert (1 - \calL_0)^{s/2} (1 + \calN)^{\alpha/2} \psi \rVert_{\vfocki{n}}
\end{align*}
Finally, we take $\vsob{s}{\alpha} = \oplus_{n = 0}^\infty \vsobi{s}{\alpha}{n}$. We have the duality relation $(\vsob{s}{\alpha})^* = \vsob{-s}{-\alpha}$. It holds that $\calC \subset \vsob{s}{\alpha}$ is dense. Similarly, for all $T > 0$, it holds that $t\calC \subset L^2_T\vsob{s}{\alpha}$ is dense.

\paragraph{The operator $\nabla_0$.} Let $\nabla_0 : \vfock \rightarrow \vfock$ be the unbounded, skew-symmetric operator whose action on $\vfocki{n}$ is given according to $\nabla_0 \psi(p_{1:n}) := \iota p_{[1:n]} \psi(p_{1:n})$. The action on cylinder functions $f$ of the form \eqref{eq:16} is given by $\nabla_0 f = \sum_{i = 1}^n \partial_i f(\omega(\phi)) \omega(\nabla\phi)$, so that it is plain to see that $\nabla_0$ is closed on $\calC_p$. Moreover, the following integration by parts formula holds: for any $f : \bbN \rightarrow \bbN$,
\begin{equation}\label{eq:10}
\lVert f(\calN) \nabla_0 u \rVert_{\vfock} = 2 \lVert f(\calN) (-\calL_0)^{1/2} u \rVert_{\vfock}
\end{equation}

\subsection{The smooth SRBP}\label{sec:mollified-srbp}
When the interaction function is smooth, $V \in C^\infty(\bbR)$, the SRBP is classically well defined. In this subsection, we give a summary of ideas; further details can be found in \cite{TarrTothValk12_DiffusivityBounds}.

Let $\calW$ be the Fréchet space of fields
\begin{equation}\label{eq:7}
\calW := \Big\{ \omega \in C^\infty(\bbR, \bbR) : \lVert \omega \rVert_{k,m}  < \infty, \, \forall k,m \in \bbN \Big\}
\end{equation}
where the seminorms are defined as $\lVert \omega \rVert_{k,m} := \sup_{x} (1+|x|)^{-1/m} | \partial^k \omega(x) |$. For $\omega\in\calW$, $V\in C^\infty(\bbR)$ satisfying Assumption \ref{ass:posdef}, and $\beta \in \bbR$, we define $(X_t)_{t\geqslant 0}$ to be the strong solution of the SDE \eqref{eq:3}. Let $(\eta_t)_{t\geqslant 0}$ be the $\calW$-valued environment process given according to \eqref{eq:51}.

\begin{lemma}
The environment process $(\eta_t)_{t \geqslant 0}$ solves the SPDE \eqref{eq:spde}; in particular, it is a $\calW$-valued Markov process. The action of the generator $\calL$ on cylinder functions $f \in \calC$ of the form $f = f(\eta(\phi_1), \ldots, \eta(\phi_k))$ is given according to $\calL f = \calL_0 f + \calA f$, where $\calL_0 f$ is the generator of the Stochastic Linear Transport equation \eqref{eq:47}, and $\calA$, due to the self-interaction, is given according to
$$ \calA f = \sum_{i = 1}^k \partial_i f(\omega(\phi_i)) \big(\omega(\nabla\phi) \omega(0) - V(\nabla\phi_i) \big) $$
\end{lemma}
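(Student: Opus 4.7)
The plan is to derive all three claims from It\^o's formula applied to the explicit representation \eqref{eq:51} of $\eta_t$, exploiting the fact that in the smooth setting $(X_t)_{t\geqslant 0}$ is a classical Brownian semi-martingale. The key preliminary observation is that evaluating \eqref{eq:51} at $x=0$ yields $\eta_t(0) = \omega(X_t) + \beta\int_0^t \nabla V(X_t-X_s)\dd s$, so the SDE \eqref{eq:3} rewrites as $\dd X_t = \dd B_t - \beta \eta_t(0)\dd t$ with $\dd\langle X\rangle_t = \dd t$.

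First I would verify the SPDE \eqref{eq:spde}. Split $\eta_t(x) = \omega(x+X_t) + I_t(x)$ with $I_t(x) := \beta\int_0^t \nabla V(x+X_t - X_s)\dd s$, and apply It\^o separately. For the first summand, smoothness of $\omega\in\calW$ gives $\dd\omega(x+X_t) = \thalf \Delta\omega(x+X_t)\dd t + \nabla\omega(x+X_t)\dd X_t$. For the second, view $I_t(x)=h(x+X_t,t)$ with $h(y,t) := \beta\int_0^t \nabla V(y-X_s)\dd s$; the explicit $t$-dependence produces the diagonal drift $\beta\nabla V(x)\dd t$ (after substitution $y=x+X_t$), while the first and second spatial derivatives of $I_t(x)$ combine with the analogous contributions from $\omega(x+X_t)$ to reconstitute $\nabla\eta_t(x)$ and $\Delta\eta_t(x)$ respectively. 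Substituting $\dd X_t = \dd B_t - \beta\eta_t(0)\dd t$ then yields \eqref{eq:spde}.

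Second, I would identify the generator on cylinder functions $f\in\calC$ of the form \eqref{eq:16}. Pair \eqref{eq:spde} against $\phi\in\calS(\bbR)$ and integrate by parts in $x$ to obtain
\[ \dd \eta_t(\phi) \;=\; \thalf\eta_t(\Delta\phi)\dd t \;-\; \eta_t(\nabla\phi)\dd B_t \;+\; \beta\bigl(\eta_t(\nabla\phi)\,\eta_t(0) - V(\nabla\phi)\bigr)\dd t. \]
A second application of It\^o to $f(\eta_t(\phi_1),\dots,\eta_t(\phi_k))$, using $\dd\langle\eta(\phi_i),\eta(\phi_j)\rangle_t = \eta_t(\nabla\phi_i)\eta_t(\nabla\phi_j)\dd t$, separates the result into a martingale and a drift. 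The drift contributions from $\thalf\eta_t(\Delta\phi_i)$ combined with the second-order terms $\partial^2_{ij} f \cdot \eta_t(\nabla\phi_i)\eta_t(\nabla\phi_j)$ coming from the quadratic covariation assemble into $\calL_0 f$ as in \eqref{eq:47}, while the $\beta$-terms produce exactly $\calA f$.

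Third, the $\calW$-valuedness and Markov property: smoothness of $V$ and $\omega$ renders the drift in \eqref{eq:3} locally Lipschitz in $X_t$, so \eqref{eq:3} has a unique strong solution; since $\eta_t$ is a deterministic, translation-covariant functional of $\omega$ and $X_{[0,t]}$, the pair $(X_t,\eta_t)$ inherits the Markov property from $B$ via pathwise uniqueness, and translation invariance passes it to $\eta_t$ alone. For $\calW$-valuedness, differentiating \eqref{eq:51} in $x$ lands each derivative either on $\omega\in\calW$ or on $V\in C^\infty$, and the polynomial growth in $x$ permitted by the seminorms $\lVert\cdot\rVert_{k,m}$ absorbs the $|X_t|$-dependence. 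The only mild technicality is a stochastic Fubini to justify commuting $\int \cdot\,\phi(x)\dd x$ with $\int \cdot \dd B_s$, which is standard here since all integrands are classical smooth functions. I expect no substantial obstacle: the lemma is essentially ``It\^o's formula in disguise'' once the identity $\dd X_t = \dd B_t - \beta\eta_t(0)\dd t$ is recognised.
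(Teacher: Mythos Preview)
Your proposal is correct and is exactly the computation the paper has in mind; the paper states this lemma without proof, treating it as a routine application of It\^o's formula in the smooth setting, and your outline fills in precisely that argument. One minor sharpening: for the Markov property of $\eta$ alone, the cleanest route is to observe that the SPDE \eqref{eq:spde} is autonomous in $\eta$ (its right-hand side depends only on $\eta_t$, not on $X_t$ separately), so strong uniqueness for \eqref{eq:spde} yields the Markov property directly, without needing to pass through the pair $(X_t,\eta_t)$ and then invoke translation invariance.
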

In the above, $\calA f$ belongs to neither $\calC$ nor even $\calC_p$, because it involves the term $\omega(0)$.

\begin{remark}\label{rmk:1}
In this smooth case, it can be shown, using Kolmogorov continuity, that $\pi(V)$ is concentrated on $\calW$. We abuse notation by writing $\pi(V)$ for the measure on $\calW$, as well as the measure on $\calS'(\bbR)$. 
\end{remark}

The reason we are interested in the measure $\pi(V)$ is because it is a stationary measure for the environment process $\eta$. This is summarised in the following lemma, along with other known facts about $\eta$. The proof is mostly an application of Itô's formula, and we refer the reader to \cite{HorvTothVeto12_DiffusiveLimits} for more details.

\begin{lemma}\label{lem:2}
The measure $\pi = \pi(V)$ is invariant for the SPDE \eqref{eq:spde}. The generator $\calL$, viewed as an unbounded operator on $\vfock$, can be decomposed as $\calL = \calL_0 + \calA_+ + \calA_-$, where the self-adjoint operator $\calL_0$ is given in \eqref{eq:6}, and for $n\geqslant 1$, the operators $\calA_\pm$ map $\vfocki{n}$ to $\vfocki{n\pm1}$. For $\psi\in\vfocki{n}$, they are given according to
\begin{equation}\label{eq:2}
\begin{split}
\calA_+ \psi(p_{1:n+1}) &= \frac{\iota \beta}{n+1} \sum_{i=1}^{n+1} p_{[1:(n+1)\setminus i]} \, \psi(p_{1:(n+1)\setminus i})\,, \\
\calA_- \psi(p_{1:n-1}) &= \iota \beta n p_{[1:n-1]} \int \VHat(q) \psi(q,p_{1:n-1}) \dd q\,,
\end{split}
\end{equation}
along with $\calA_+|_{\vfocki{0}} = \calA_-|_{\vfocki{0}\oplus\vfocki{1}} = 0$. It holds that $(\calA_\pm)^* = - \calA_\mp$, so that $\calA \eqdef \calA_+ + \calA_-$ is skew self-adjoint.

For cylinder function $u \in t\calC$, the associated Dynkin martingale
\begin{equation}\label{eq:8}
M_t(u) = u(t, \eta_t) - u(0, \eta_0) - \int_0^t ( \partial_t + \calL )u(s, \eta_s) \dd s
\end{equation}
may be given explicitly according to $M_t(u) = \int_0^t \nabla_0 u(s, \eta_s) \dd B_s$. In particular, the quadratic variation is given by $\langle M(u) \rangle_t = \int_0^t |\nabla_0 u(s, \eta_s)|^2 \dd s$.
\end{lemma}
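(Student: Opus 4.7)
The strategy is to derive the explicit Itô decomposition first, read off the action of $\calL$ on cylinder functions, then reinterpret this action in Fock space to get the formulas \eqref{eq:2}. Invariance and skew-adjointness then follow as corollaries.

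First I would apply Itô's formula to $u(t,\eta_t)$ for $u\in t\calC$ of the form $u(t,\omega)=u(t,\omega(\phi_1),\ldots,\omega(\phi_k))$, testing the SPDE \eqref{eq:spde} against each $\phi_i$. Since integration by parts sends $\nabla\eta_t$ to $-\eta_t(\nabla\phi_i)$, the martingale part collects into $\int_0^t\sum_i\partial_i u\cdot\eta_s(\nabla\phi_i)\,\dd B_s=\int_0^t\nabla_0 u(s,\eta_s)\,\dd B_s$, which identifies $M_t(u)$ and yields the quadratic variation via Itô isometry; this proves the last claim of the lemma. Grouping the remaining drift terms into the contribution from the transport part ($\thalf\Delta\eta\,\dd t$ and the cross variation of $\nabla\eta\,\dd B$) and the self-interaction drift $\beta(\nabla V(x)-\nabla\eta(x)\eta(0))\,\dd t$ gives $\calL=\calL_0+\calA$, with $\calL_0$ as in \eqref{eq:47} and $\calA f=\beta\sum_i\partial_i f\cdot(\eta(\nabla\phi_i)\eta(0)-V(\nabla\phi_i))$.

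Next I would lift these formulas to Fock space. Observe that $V(\nabla\phi_i)=\bbE[\eta(\nabla\phi_i)\eta(0)]$, so $\calA f=\beta\,{:}\nabla_0 f\cdot\eta(0){:}$, where ${:}\cdot{:}$ is the Wick product. For $f\in\calC\cap H_n$ with Fourier kernel $\psi$, the identity $\nabla_0\psi(p_{1:n})=\iota p_{[1:n]}\psi(p_{1:n})$ combined with the standard Wick multiplication rule (multiplying an element of $H_n$ by $\eta(0)=\int\eta(x)\delta_0(x)\dd x$ produces one chaos-raising and one chaos-lowering component, the latter coming from contracting the new variable against one of the existing $n$ slots through the covariance $\hat V$) gives exactly the two summands in \eqref{eq:2} after symmetrisation. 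The raising kernel is obtained by averaging over which of the $n+1$ momenta is ``inserted'' as a new slot, producing the factor $\tfrac{1}{n+1}\sum_i$; the lowering kernel is obtained by contracting one of the $n$ momenta $q$ against $\hat V$, producing the factor $n\int\hat V(q)\psi(q,p_{1:n-1})\dd q$.

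Skew-adjointness $(\calA_+)^\ast=-\calA_-$ is then a direct check in the inner product on $\Gamma L^2(V)$. Given $\psi\in\vfocki{n}$ and $\phi\in\vfocki{n+1}$, I would plug the kernel formula of $\calA_+\psi$ into $\langle\calA_+\psi,\phi\rangle$, use the symmetry of $\phi$ to collapse the sum over $i$ to $(n+1)$ times the $i=n{+}1$ term, then rename $p_{n+1}=q$ and recognise the inner integral as exactly $-\langle\psi,\calA_-\phi\rangle$ (the sign coming from the $\iota$'s and the swap of complex conjugates). Taking adjoints proves $(\calA_-)^\ast=-\calA_+$, hence $\calA$ is skew self-adjoint.

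Finally, invariance of $\pi$ follows immediately: for $f\in\calC$ with chaos components $(\psi_n)$, the mean of $\calL f$ equals the $H_0$ component of $\calA_-\psi_1$, which from \eqref{eq:2} is $\iota\beta\cdot 1\cdot p_{[\emptyset]}\int\hat V(q)\psi_1(q)\dd q=0$ because $p_{[\emptyset]}=0$; hence $\int\calL f\,\dd\pi=0$ for a dense set of test functions and $\pi$ is invariant. The only genuinely delicate step is the Fock-space computation in paragraph two, where bookkeeping of the Wick contraction and subsequent symmetrisation must be carried out carefully; everything else is either an Itô calculation using the smoothness of $\omega$ and $V$ granted by the smooth setting (Remark \ref{rmk:1}) or an algebraic manipulation of the explicit kernels.
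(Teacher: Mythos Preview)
Your proposal is correct and follows essentially the same approach as the paper, which in fact does not give a proof but only remarks that ``the proof is mostly an application of It\^o's formula'' and refers the reader to \cite{HorvTothVeto12_DiffusiveLimits}. Your outline fills in exactly those details: It\^o's formula on $u(t,\eta_t)$ to extract the martingale part and the generator, the Wick-product reinterpretation $\calA f=\beta\,{:}\nabla_0 f\cdot\eta(0){:}$ to pass to Fock kernels, and then the algebraic checks of $(\calA_+)^*=-\calA_-$ and $\int\calL f\,\dd\pi=0$ via the vanishing of $p_{[\emptyset]}$.
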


We finish this section by proving the following integrability property. It is the starting point for verifying the condition known as the {\it Itô} trick, that is introduced in Section \ref{sec:definition-ito-trick}. The strategy of proof is standard, see for example \cite{KomoLandOlla12_FluctuationsMarkov,GubinelliJara13}.

\begin{proposition}\label{prp:1}
For $V \in C^\infty(\bbR)$ satisfying Assumption \ref{ass:posdef}, and environment distributions $\pi_0 \ll \pi(V)$, let $\bbP_0$ be the law of the environment process $(\eta_t)_{t \geqslant 0}$ with interaction function $V$, coupling constant $\beta^2 \geqslant 0$, and environment distribution $\pi_0$; in other words, $\bbP_0 = \text{ENV}(V, \beta, \pi_0)$. For all $p \geqslant 1$, there is a constant $C = C(p) > 0$ such uniformly among all such $V$, $\pi_0$, $T \geqslant 0$, and time-dependent cylinder functions $u \in t\calC$, it holds
\begin{equation}\label{eq:15}
\bbE_0\Big[ \sup_{t \in [0,T]} \Big\lvert \int_0^t \calL_0u(s, \eta_s) \dd s \Big\rvert^p \Big] \leqslant C \lVert \dd \pi_0 / \dd \pi \rVert_{\vfock} \lVert \nabla_0 u \rVert^p_{L^2_TL^{2p}(V)}
\end{equation}
\end{proposition}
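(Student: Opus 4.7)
The plan is the standard It\^o trick adapted to our setting: reduce to the stationary case by absolute continuity, then exploit the antisymmetry $\calA^* = -\calA$ (Lemma \ref{lem:2}) via a time-reversal argument to bound $\int_0^t \calL_0 u$ by a pair of forward/backward martingales. First, since $\pi_0 \ll \pi$, Cauchy-Schwarz gives
\begin{equation*}
\bbE_0[G] = \bbE_\pi\Big[\frac{\dd\pi_0}{\dd\pi}(\eta_0) \, G\Big] \leqslant \Big\lVert \frac{\dd\pi_0}{\dd\pi}\Big\rVert_{\vfock} \big(\bbE_\pi[G^2]\big)^{1/2}
\end{equation*}
for any nonnegative path functional $G$. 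Applied with $G = \sup_{t \leqslant T}\big|\int_0^t \calL_0 u(s,\eta_s)\dd s\big|^p$, this reduces matters to proving the bound $\bbE_\pi\big[\sup_{t \leqslant T}\big|\int_0^t \calL_0 u(s,\eta_s)\dd s\big|^{2p}\big] \leqslant C_p \lVert \nabla_0 u \rVert^{2p}_{L^2_T L^{2p}(V)}$ for the stationary process.

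Next, by Lemma \ref{lem:2}, the forward Dynkin martingale associated to $u \in t\calC$ is $M_t(u) = \int_0^t \nabla_0 u(s,\eta_s)\dd B_s$, with bracket $\langle M(u)\rangle_t = \int_0^t |\nabla_0 u(s,\eta_s)|^2 \dd s$. Stationarity of $\pi$ together with $\calA^* = -\calA$ implies that the time-reversed process $\hat\eta_s^{(t)} := \eta_{t-s}$ is Markov with generator $\calL^* = \calL_0 - \calA$. Applying Dynkin's formula for $\hat\eta^{(t)}$ to the reversed test function $v(r,\omega) := u(t-r,\omega)$ and adding the resulting identity to the forward one causes the antisymmetric contributions to cancel, producing
\begin{equation*}
2\int_0^t \calL_0 u(s,\eta_s)\,\dd s = -M_t(u) - \hat M_t(u),
\end{equation*}
where $\hat M_t(u)$ is a martingale in the backward filtration of $\hat\eta^{(t)}$ whose quadratic variation equals $\int_0^t |\nabla_0 u(s,\eta_s)|^2 \dd s$ after the change of variable $r \mapsto t-r$.

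Finally, Burkholder-Davis-Gundy combined with Doob's maximal inequality, applied to $M$ in its natural forward filtration and to $\hat M$ in the backward filtration of $\hat\eta^{(T)}$ (using stationarity of $\pi$ to transfer distributions across the parametrisation in $t$), yields
\begin{equation*}
\bbE_\pi\Big[\sup_{t \leqslant T}\big(|M_t(u)|^{2p} + |\hat M_t(u)|^{2p}\big)\Big] \leqslant C_p \,\bbE_\pi\Big[\Big(\int_0^T |\nabla_0 u(s,\eta_s)|^2\dd s\Big)^p\Big].
\end{equation*}
Minkowski's integral inequality, together with stationarity ($\bbE_\pi[|\nabla_0 u(s,\eta_s)|^{2p}] = \lVert\nabla_0 u(s,\cdot)\rVert^{2p}_{L^{2p}(V)}$), then converts the right-hand side into $\lVert\nabla_0 u\rVert^{2p}_{L^2_T L^{2p}(V)}$, giving the desired estimate.

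The main technical obstacle I anticipate is the endpoint-dependence of the backward decomposition: $\hat M_t(u)$ depends on the reference endpoint $t$ through the reversed process $\hat\eta^{(t)}$, so $\{\hat M_t(u)\}_{t \leqslant T}$ does not form a single martingale in a fixed filtration. Controlling its supremum therefore requires some care — either via a uniform estimate across the family $\{\hat\eta^{(t)}\}_{t \leqslant T}$ using stationarity, or equivalently by writing $\int_0^t \calL_0 u = \int_0^T \calL_0 u - \int_t^T \calL_0 u$ and applying the time-reversal identity separately on $[0,T]$ and $[t,T]$, thereby replacing a running-maximum over endpoints by a running-maximum in the forward filtration of a single reversed process. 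The remaining ingredients (BDG, Doob, Minkowski, stationarity) are standard once this bookkeeping is in place.
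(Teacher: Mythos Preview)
Your proposal is correct and follows the same route as the paper: Cauchy--Schwarz to reduce to stationarity, then the forward/backward martingale decomposition, then BDG and Minkowski. The ``obstacle'' you flag and your second proposed fix---reversing once over the fixed window $[0,T]$ rather than over $[0,t]$---is exactly what the paper does from the outset: it defines a single backward martingale $M^*_t(u)$ for the reversed process $\eta^*_s = \eta_{T-s}$ and writes $\int_0^t \calL_0 u(s,\eta_s)\,\dd s = \tfrac{1}{2}\big(M_t(u) + M^*_T(u) - M^*_{T-t}(u)\big)$, so that the supremum over $t$ reduces to running maxima of two fixed martingales.
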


\begin{proof}
Write $\bbP = \text{ENV}(V, \beta, \pi)$. We begin with Cauchy-Schwarz:
$$ \bbE_0\Big[ \sup_{t \in [0,T]} \Big\lvert \int_0^t \calL_0u(s, \eta_s) \dd s \Big\rvert^p \Big] \leqslant \lVert \dd \pi_0 / \dd \pi \rVert_{\vfock} \Big( \bbE\Big[ \sup_{t \in [0,T]} \Big\lvert \int_0^t \calL_0(s, \eta_s) \dd s \Big\rvert^{2p}  \Big] \Big)^{1/2} $$
With this step we have reduced the proof to the stationary setting, and it remains to show that
\begin{equation}\label{eq:9}
\bbE \Big[ \sup_{t \in [0,T]} \Big\lvert \int_0^t \calL_0u(s, \eta_s) \dd s \Big\rvert^{2p} \Big] \leqslant \lVert \nabla_0 u \rVert^{2p}_{L^2_TL^{2p}(V)}
\end{equation}

Let $(M_t(u))_{t \geqslant 0}$ be the Dynkin martingale as given in \eqref{eq:8}. We reverse the process $\zeta_t = (t, \eta_t)$ to obtain $(\zeta^*_t)_{t \in [0, T]}$, where $\zeta^*_t = (T - t, \eta^*_t)$, and $\eta^*_t = \eta_{T - t}$. We also consider the Dynkin martingale associated to $\zeta^*_t$,
$$ M^*_t(u) := u(T-t, \eta^*_t) - u(T, \eta^*_0) - \int_0^t ( - \partial_t + \calL^* ) u(T-s, \eta^*_s) \dd s $$
where $\calL^* = \calL_0 - \calA$ is the adjoint of $\calL$. We note that $(M^*_t(u))_{t \in [0, T]}$ is a martingale with respect to the filtration generated by $(\eta^*_t)$.

We combine the martingales in such a way so as to cancel out both the boundary terms and the terms that correspond to the anti-symmetric part of the generator:
$$ \int_0^t \calL_0u(s, \eta_s) \dd s = \frac{1}{2} \big( M_t(u) + M^*_T(u) - M^*_{T-t}(u) \big) $$
We control each term on the RHS in the above separately, noting that by doing so we pick up a $p$-dependent constant. Each of these terms is controlled in the same manner, we only detail one example. Burkholder-David-Gundy is used to give
\begin{align*}
\bbE\Big[ \sup_{t \in [0, T]} | M_t(u) |^{2p} \Big] \lesssim \bbE\Big[ \langle M(u) \rangle_T^p \Big] = \Big\lVert \int_0^t (\nabla_0 u(s, \eta_s))^2 \dd s \Big\rVert_{L^p_\bbP}^p \leqslant \Big(\int_0^t \lVert (\nabla_0 u(s, \eta_s))^2 \rVert_{L^p_\bbP} \dd s \Big)^p
\end{align*}
where in the above we used Minkowski's inequality. Due to time-stationarity under $\bbP$, the RHS equals $\lVert \nabla_0 u \rVert^{2p}_{L^2_TL^{2p}(V)}$, and this concludes the proof of \eqref{eq:15}.
\end{proof}

\section{The limiting generator $\calL$ and notions of solution}\label{sec:constr-limit-gener}
In Section \ref{sec:mollified-srbp} we introduced the smooth SRBP in which the interaction function $V$ was assumed to be smooth. We derived the generator $\calL$ for the corresponding environment process, $(\eta_t)_{t \geqslant 0}$. In the case of rough interaction, $V \in \calS'(\bbR)$, one can still make sense of the operator $\calL$, but the immediate connection with any stochastic process $(\eta_t)_{t \geqslant 0}$ is lost.
\begin{definition}
Let $V \in \calS'(\bbR)$ satisfy Assumption \ref{ass:posdef}, and define the unbounded operator $\calL : \vfock \rightarrow \vfock$ according to the action
$$\calL f := \calL_0 f + \calA f, \qquad \calA f := \calA_+ f + \calA_- f$$
where $\calA_\pm$ are given in \eqref{eq:2}.
\end{definition}
We remark that it is not a priori clear which elements $f \in \vfock$ return an element $\calL f \in \vfock$, that is to say, it is not clear what is the domain of $\calL$. In general we do not expect $\calL f \in \vfock$ even for $f \in \calC$. However, $\calL f$ is well defined as an element of $\calE_{n-1} \oplus \calE_n \oplus \calE_{n+1}$ provided that $f \in \calE_n \cap \calS(\bbR^n, \bbC)$. Whether such $\calL f \in \vfock$ turns out to be unimportant for our arguments: we only require a weaker $\vsob{-1}{0}$ integrability, as explored in Section \ref{sec:grad-sect-cond}.

The goal of this section is to give a good notion of solution associated to the operator $\calL$. That is to say, given a distribution $\bbP_0$ on the path space $C(\bbR_+, \calS'(\bbR))$, we wish to present sufficient conditions under which the corresponding stochastic process $(\eta_t)_{t \geqslant 0}$ has generator $\calL$. Our notion of solution is that of {\it energy solution}, the history of which we have given already in the introduction. The requirements to be an energy solution can be thought of as a form of martingale problem for the operator $\calL$, along with an integrability condition for additive functionals, known as the Itô trick. In Sections \ref{sec:existenceenergy} and \ref{sec:uniq-energy-solut} we show that our definition is well posed, in that we have existence and uniqueness.

We begin this section with an a priori estimate, known as the Graded Sector Condition, followed by the definition of the Itô trick and the definition of an energy solution. Then we give a second notion of solution, the {\it cylinder function martingale problem}, which will be useful for proving uniqueness. Finally, in Theorem \ref{thm:etoc}, we show that an energy solution is also a solution to the cylinder function martingale problem, provided that the Itô trick holds with sufficiently high integrability.

\subsection{The Graded Sector Condition}\label{sec:grad-sect-cond}
We begin this subsection by introducing a central estimate, which we refer to as the {\it inner integral bound}. The proof is an elementary consequence of uniform local integrability, condition \eqref{eq:63}, and can be found in the appendix, Section \ref{sec:proof-prop-iib}.
\begin{proposition}[Inner integral bound]\label{prp:iib}
Let $V \in \calS'(\bbR)$ satisfy Assumption \ref{ass:posdef} and define for $\lambda, s > 0$,
\begin{equation}\label{eq:46}
J^s(\lambda) := \sup_{p \in \bbR} \Big\{ \beta^2 \int_\bbR \frac{\VHat(q)}{(\lambda + \thalf |p+q|^2)^s} \dd q \Big\}
\end{equation}
Then it holds that $J^s(\lambda) < \infty$ and $\lim_{\lambda\rightarrow\infty} J^s(\lambda) = 0$ for all $s > 1/2$.
\end{proposition}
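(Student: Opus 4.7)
The plan is to reduce the supremum in $p$ to a $p$-independent series by a change of variables and the uniform local integrability assumption \eqref{eq:63}, and then to analyse the resulting series by comparison with an explicit integral.

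First I would make the change of variable $u = p + q$, so that
\[
\int_\bbR \frac{\VHat(q)}{(\lambda + \thalf |p+q|^2)^s} \dd q = \int_\bbR \frac{\VHat(u-p)}{(\lambda + \thalf u^2)^s} \dd u.
\]
The $p$-dependence is now confined to the numerator, while the denominator depends only on $u$. Split the $u$-integral over the unit intervals $[z, z+1]$, $z \in \bbZ$; on each such interval the denominator is bounded below by $\lambda + \thalf m_z^2$, where $m_z := \inf\{|u| : u \in [z, z+1]\}$, i.e.\ $m_z = z$ for $z \geqslant 0$, $m_z = |z+1|$ for $z \leqslant -1$, so that $m_{-1} = 0$.

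Next, the key observation is that for any $p \in \bbR$, the translated interval $[z-p, z+1-p]$ has length one and is therefore covered by at most two consecutive unit intervals with integer endpoints. Consequently, Assumption \eqref{eq:63} gives, with $C_V := \sup_{k \in \bbZ} \int_k^{k+1} \VHat(v) \dd v < \infty$,
\[
\int_z^{z+1} \VHat(u-p) \dd u = \int_{z-p}^{z+1-p} \VHat(v) \dd v \leqslant 2 C_V
\]
uniformly in $z$ and $p$. Combining these two bounds and summing over $z$,
\[
\beta^2 \int_\bbR \frac{\VHat(u-p)}{(\lambda + \thalf u^2)^s} \dd u \leqslant 2 \beta^2 C_V \sum_{z \in \bbZ} \frac{1}{(\lambda + \thalf m_z^2)^s},
\]
which is manifestly independent of $p$. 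Taking the supremum in $p$ yields an upper bound for $J^s(\lambda)$ in terms of this series.

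Finally, to control the series I would compare it with the integral $\int_0^\infty (\lambda + \thalf x^2)^{-s} \dd x$, since the summands for $z \geqslant 1$ (resp.\ $z \leqslant -2$) are decreasing (resp.\ increasing) in $z$. The rescaling $x = \sqrt{2\lambda}\, y$ gives
\[
\int_0^\infty \frac{\dd x}{(\lambda + \thalf x^2)^s} = \sqrt{2}\, \lambda^{\frac{1}{2}-s} \int_0^\infty \frac{\dd y}{(1+y^2)^s},
\]
and the latter integral is finite precisely for $s > 1/2$. Together with the single ``central'' term bounded by $\lambda^{-s}$, this shows $J^s(\lambda) \lesssim \lambda^{-s} + \lambda^{\frac{1}{2} - s}$, which is finite for every $\lambda > 0$ and tends to $0$ as $\lambda \to \infty$, exactly when $s > 1/2$. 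I expect no real obstacle; the only point requiring care is the bookkeeping near $z = -1, 0$ where $m_z = 0$, but this contributes only the harmless $\lambda^{-s}$ term.
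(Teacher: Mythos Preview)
Your proposal is correct and follows essentially the same route as the paper: change variables to put the $p$-dependence into the numerator, partition the $u$-axis into intervals, bound the numerator on each piece by the uniform local integrability constant, and use the convergence of $\sum n^{-2s}$ for $s>1/2$. The only cosmetic difference is that the paper uses annuli of a free radius $R$ and then sets $R=\sqrt{\lambda}$ to get the vanishing, whereas you fix unit intervals and compute the scaling $\lambda^{-s}+\lambda^{1/2-s}$ directly; both give the same conclusion.
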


The reason for the name ``inner integral bound'' is as follows. In the analysis, we shall come across iterated integrals, in which one of the variables is ``free'', and can be taken on the inside, producing an inner integral. That inner integral we shall always be able to control with the quantity $J^s(\lambda)$. See for example \eqref{eq:40}, in which the free variable is $p_{n+1}$, and the quantity $J^1(1)$ is used as an upper bound.

\begin{remark}\label{rmk:2}
The assumption of uniform local integrability, Equation \eqref{eq:63}, is minimal in the following sense. Consider $V \in \calS'(\bbR)$ for which $\VHat \in L^1_{\text{loc}}(\bbR)$, but \eqref{eq:63} does not hold, i.e.\ there exists a sequence $(z_n)_{n = 1}^\infty \subset \bbZ$ such that $ \int_{z_n}^{z_n+1} \VHat(q) \dd q \rightarrow \infty$. Then for any $\lambda, s > 0$, it holds
$$ \sup_{p \in \bbR} \Big\{ \int_\bbR \frac{\VHat(q)}{(\lambda + \thalf |p+q|^2)^s} \dd q  \Big\} \geqslant \int_\bbR \frac{\VHat(q)}{(\lambda + \thalf |q-z_n|^2)^s} \dd q \geqslant \frac{1}{(\lambda+1)^s} \int_{z_n}^{z_{n+1}} \VHat(q) \dd q $$
In particular, one has $J^s(\lambda) = \infty$.
\end{remark}

One crucial regularity property of our operator $\calL$ is that it is a continuous map $\calL : \vsob{1}{1} \rightarrow \vsob{-1}{0}$. The idea being that you lose ``two derivatives'' in the operator $(1-\calL_0)^{1/2}$ and ``one derivative'' in $(1+\calN)^{1/2}$. We refer to this estimate as {\it the Graded Sector Condition}. A version of this estimate is used in previous works on the SRBP \cite{HorvTothVeto12_DiffusiveLimits, CannizzarGiles25_InvariancePrinciple} and in many other works involving different models, e.g.\ the Stochastic Burgers Equation \cite{CannGubiToni24_GaussianFluctuations}, however not in the context of solving singular SDEs/SPDEs, to the best of our knowledge.

\begin{lemma}[Graded Sector Condition]\label{lem:1}
Let $V \in \calS'(\bbR)$ satisfy Assumption \ref{ass:posdef}. Then for each $\alpha \in \bbR$, the operator $\calA_\pm$ defines a bounded linear operator $\calA_{\pm} : \vsob{1}{\alpha} \rightarrow \vsob{-1}{\alpha-1}$ with operator norm
$$ \lVert \calA_{\pm} \rVert_{L(\vsob{1}{\alpha}, \vsob{-1}{\alpha-1})}^2 \leqslant J^1(1)(2^{1-\alpha} + 2^\alpha) $$
where $J^1(1)$ is as defined in \eqref{eq:46}, and $J^1(1) < \infty$ by Proposition \ref{prp:iib}.
\end{lemma}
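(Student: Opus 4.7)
The plan is to work degree-by-degree in the chaos decomposition, estimate $\calA_+$ by hand using Proposition \ref{prp:iib}, and obtain the bound for $\calA_-$ by duality. By density of $\calC$ in $\vsob{1}{\alpha}$, it suffices to work on the dense subspace of Schwartz kernels $\psi \in \vfocki{n}\cap \calE_n$ for arbitrary $n \geqslant 0$, and assemble the estimate on each chaos level.

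Fix such a $\psi$. Since $\calN$ acts as multiplication by the chaos degree and $\calL_0$ as multiplication by $-\tfrac12|p_{[1:n+1]}|^2$ on $\vfocki{n+1}$, the target norm unwinds as
\begin{equation*}
\lVert \calA_+\psi\rVert_{\vsob{-1}{\alpha-1}}^2 = (n+2)^{\alpha-1}(n+1)!\int \prod_{i=1}^{n+1}\VHat(p_i)\,\frac{|\calA_+\psi(p_{1:n+1})|^2}{1+\tfrac{1}{2}|p_{[1:n+1]}|^2}\,\dd p_{1:n+1}.
\end{equation*}
Applying Cauchy--Schwarz to the $(n+1)$-term sum defining $\calA_+\psi$ in \eqref{eq:2} yields the pointwise bound $|\calA_+\psi(p_{1:n+1})|^2 \leqslant \frac{\beta^2}{n+1}\sum_{i=1}^{n+1}|p_{[1:(n+1)\setminus i]}|^2|\psi(p_{1:(n+1)\setminus i})|^2$. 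The crucial observation is that the measure $\prod_j \VHat(p_j)\dd p_j$, the denominator $1+\tfrac12|p_{[1:n+1]}|^2$, and the kernel $\psi$ are all permutation-symmetric in $p_{1:n+1}$, so each of the $n+1$ summands contributes the same integral after relabeling and the combinatorial factor $\tfrac{1}{n+1}$ is exactly absorbed. Integrating the resulting ``free'' variable $p_{n+1}$ first, the inner integral $\int \VHat(p_{n+1})(1+\tfrac12|p_{[1:n]}+p_{n+1}|^2)^{-1}\dd p_{n+1}$ is uniformly bounded by $J^1(1)/\beta^2$ by Proposition \ref{prp:iib} (with $s=1$, $\lambda=1$).

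Using $|p_{[1:n]}|^2 \leqslant 2(1+\tfrac12|p_{[1:n]}|^2)$ to recognise the remaining integral as $\tfrac{2}{n!}(n+1)^{-\alpha}\lVert\psi\rVert_{\vsob{1}{\alpha}}^2$ and collecting all prefactors gives
\begin{equation*}
\lVert \calA_+\psi\rVert_{\vsob{-1}{\alpha-1}}^2 \leqslant 2 J^1(1)\,(n+1)^{1-\alpha}(n+2)^{\alpha-1}\lVert \psi\rVert_{\vsob{1}{\alpha}}^2.
\end{equation*}
A short case analysis on the sign of $\alpha-1$ shows $(n+1)^{1-\alpha}(n+2)^{\alpha-1}\leqslant \max(1,2^{\alpha-1})$ uniformly in $n$, and $2\max(1,2^{\alpha-1})\leqslant 2^{1-\alpha}+2^\alpha$ (trivially if $\alpha\leqslant 1$ since the RHS exceeds $2\sqrt{2}$; if $\alpha\geqslant 1$, both sides compare directly), which completes the estimate for $\calA_+$.

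For $\calA_-$, rather than redoing the computation, the plan is to invoke the adjoint identity $(\calA_\pm)^* = -\calA_\mp$ from Lemma \ref{lem:2} together with the duality $(\vsob{s}{\alpha})^* = \vsob{-s}{-\alpha}$, which gives
\begin{equation*}
\lVert \calA_-\rVert_{\vsob{1}{\alpha}\to\vsob{-1}{\alpha-1}} = \lVert \calA_+\rVert_{\vsob{1}{1-\alpha}\to\vsob{-1}{-\alpha}},
\end{equation*}
and applying the bound for $\calA_+$ with $1-\alpha$ in place of $\alpha$ yields the symmetric constant $J^1(1)(2^\alpha+2^{1-\alpha})$. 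The main subtlety is the symmetrisation step for $\calA_+$: the sum in the definition of $\calA_+\psi$ picks out a distinguished index $i$, but the full integration measure carries no such preferred index, so a careful relabeling argument is needed to show that all summands contribute equally and the combinatorial $\tfrac{1}{n+1}$ collapses exactly. Once that bookkeeping is settled, the inner integral bound of Proposition \ref{prp:iib} does all the analytic work.
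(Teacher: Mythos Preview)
Your proposal is correct and follows essentially the same approach as the paper: Cauchy--Schwarz on the symmetrised sum defining $\calA_+\psi$, exploitation of the permutation symmetry of $\mu_{n+1}$ to collapse the sum, the inner integral bound $J^1(1)$ on the free variable, and finally duality via $(\calA_\pm)^*=-\calA_\mp$ for $\calA_-$. Your handling of the final constant $(n+2)^{\alpha-1}(n+1)^{1-\alpha}$ is in fact slightly more careful than the paper's, which asserts the sharper $\lVert\calA_+\rVert^2\leqslant 2^\alpha J^1(1)$ without distinguishing the case $\alpha<1$.
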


\begin{proof}
We begin with $\calA_+$. By density, it is enough to consider $f \in \calC$, and by orthogonality we may assume that $f = \psi$ for some $\psi \in \calE_n \cap \calS(\bbR^n)$. In which case,
$$ \lvert \calA_+ \psi  \rvert^2 \leqslant \frac{\beta^2}{n+1} \sum_{i=1}^{n+1} |p_{[1:(n+1)\setminus i]}|^2 \, |\psi(p_{1:(n+1)\setminus i})|. $$
Using this bound, and the symmetry of $\mu_{n+1}(\dd p_{1:(n+1)})$, we obtain
\begin{equation}\label{eq:40}
\lVert \calA_+ \psi \rVert_{\vsob{-1}{\alpha-1}}^2 \leqslant (n+2)^{\alpha-1} \beta^2 \int \frac{|p_{[1:n]}|^2 |\psi(p_{1:n})|^2}{1 + \thalf|p_{[1:(n+1)]}|^2} \mu_{n+1}(\dd p_{1:(n+1)})
\end{equation}
Using $\mu_{n+1}(\dd p_{1:(n+1)}) = (n+1) \VHat(p) \dd p_{n+1} \, \mu_n(\dd p_{1:n})$, we obtain
\begin{align*}
\beta^2 &\int \frac{|p_{[1:n]}|^2 |\psi(p_{1:n})|^2}{1 + \thalf|p_{[1:(n+1)]}|^2} \mu_{n+1}(\dd p_{1:(n+1)}) \\
 &\leqslant (n+1) \int |p_{[1:n]}|^2 |\psi(p_{1:n})|^2 \big( \beta^2 \int \frac{\VHat(p)}{1 + \thalf|p_{[1:n]} + p_{n+1}|^2} \dd p_{n+1} \big) \, \mu_n(\dd p_{1:n}) \\
 &\leqslant (n+1) \int |p_{[1:n]}|^2 |\psi(p_{1:n})|^2 J^1(1) \, \mu_n(\dd p_{1:n})
\end{align*}
Plugging this into \eqref{eq:40} gives
$$ \lVert \calA_+ \psi \rVert_{\vsob{-1}{\alpha-1}}^2 \leqslant 2 J^1(1) \Big( \frac{n+2}{n+1} \Big)^{\alpha-1} \lVert \psi \rVert_{\vsob{1}{\alpha}}^2 $$
which implies $\lVert \calA_+ \rVert_{L(\vsob{1}{\alpha},\vsob{-1}{\alpha-1})}^2 \leqslant 2^\alpha J^1(1)$. By duality, it holds that $\lVert \calA_- \rVert_{L(\vsob{1}{\alpha},\vsob{-1}{\alpha-1})}^2 \leqslant 2^{1-\alpha} J^1(1)$.
\end{proof}

\subsection{The Itô trick and the notion of energy solution}\label{sec:definition-ito-trick}
If one naively writes down the martingale problem associated to $\calL$, then it will involve additive functionals $\int f(s, \eta_s) \dd s$ that are not well-defined. Indeed, $f(s, \omega)$ shall depend on the value $\omega(0)$, which is nonsense because we know only that $\omega \in \calS'(\bbR)$. The key insight is that we can give meaning to these terms, provided that we make an additional integrability assumption, known as the Itô trick, as in \cite{GoncalvesJara14_NonlinearFluctuations,GubinelliJara13,GubinelPerkows20_InfinitesimalGenerator}.
\begin{definition}[Itô trick]\label{def:itoTrick}
Let $V \in \calS'(\bbR)$ satisfy Assumption \ref{ass:posdef} and let $\eta=(\eta_t)_{t\geqslant 0}$ be a $C(\bbR_{\geqslant 0}, \calS'(\bbR))$-valued random variable. For $p \geqslant 1$, we say that $\eta$ satisfies the Itô trick (with respect to $V$ and $p$) if for each horizon $T \geqslant 0$, there exists a constant $C > 0$ such that 
\begin{equation}\label{eq:17}
\bbE\Big[ \sup_{0 \leqslant t \leqslant T} \Big\lvert \int_0^t f(s, \eta_s) \dd s \Big\rvert^p \Big] \leqslant C \lVert \sqrt{2p-1}^{\mathcal{N}} f\rVert^p_{L^2_T\vsob{-1}{0}} \qquad \forall f \in t\calC
\end{equation}
\end{definition}

\begin{remark}
In the smooth setting, Proposition \ref{prp:1} gives us a way to control additive functions of the form $\int \calL_0 u(s, \eta_s) \dd s$. If one applies this proposition in the case of $u = (1-\calL_0)^{-1} f$, then one obtains the control given on the RHS of \eqref{eq:17}. See Proposition \ref{prp:tight}.
\end{remark}

As mentioned, this assumption allows for us to give meaning to the additive functionals for nice enough $f$, and we continue to write $\int_0^t f(s, \eta_s) \dd s$, even though the process will no longer be, in general, of finite variation. In the following definition the exponential factor $\sqrt{2p-1}^\calN$ appearing in \eqref{eq:17} is irrelevant, because we consider $p = 1$.
\begin{definition}\label{def:i}
Assuming that Definition \ref{def:itoTrick} is satisfied in the case $p = 1$, for each $T \geqslant 0$, we may define $I_T : L^2_T\vsob{-1}{0} \rightarrow L^1_\bbP C_T$ to be the continuous extension of the map $t\calC \ni f \mapsto ( \int_0^t f(s, \eta_s) \dd s )_{t \in [0, T]}$, the existence of which is due to the bound \eqref{eq:17}. For any $f \in L^2_{T, \text{loc}}\vsob{-1}{0}$, we write $\int_0^\cdummy f(s, \eta_s) \dd s$ for the unique $C(\bbR_{\geqslant 0}, \bbR)$-valued random variable that coincides with $I_T(f)$ on the time interval $[0, T]$, for each $T \geqslant 0$.
\end{definition}

Definition \ref{def:i} is used to give meaning to additive functionals $\int_0^\cdummy \calA h(s, \eta_s) \dd s$ that would appear in the martingale problem for the operator $\calL$. For a single test function $\phi \in \calS(\bbR)$, we define the linear functional $\ell_\phi \in \calC_p$ to be the cylinder function given according to
\begin{equation}\label{eq:38}
\ell_{\phi}(\eta) := \eta(\phi)
\end{equation}
Indeed, if $\eta=(\eta_t)_{t\geqslant 0}$ is a $C(\bbR_{\geqslant 0}, \calS'(\bbR))$ valued random variable satisfying the Itô trick with respect to $V$ and $p = 1$, then Definition \ref{def:i} defines a process $\int_0^\cdummy \calA \ell_\phi(s, \eta_s) \dd s$ because of the following estimate:

\begin{proposition}\label{prp:2}
Let $V \in \calS'(\bbR)$ satisfy Assumption \ref{ass:posdef}. For $\phi \in \calS(\bbR)$ it holds that
$$ \lVert \calA \ell_\phi \rVert_{\vsob{-1}{0}}^2 \, \vee \, \lVert \calL \ell_\phi \rVert_{\vsob{-1}{0}}^2 \leqslant J^1(1) \int \VHat(p) |p|^2 | \phiHat(p)|^2 \dd p $$
In particular, for all $T \geqslant 0$, it holds $\calA \ell_\phi, \calL \ell_\phi \in L^2_{T}\vsob{-1}{0}$.
\end{proposition}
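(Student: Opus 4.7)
The proof is a direct computation in the Fock-space picture. Since $\ell_\phi(\omega) = \omega(\phi) = \int \phi(x) \,{:}\omega(x){:}\, \dd x$ is a first-chaos cylinder function, under the isomorphism $H_1 \cong \vfocki{1}$ it corresponds to the kernel $\phiHat \in \vfocki{1}$. Applying the explicit formulas \eqref{eq:2}: for $n=1$ the coefficient $p_{[1:0]}$ appearing in the formula for $\calA_-$ is an empty sum (consistent with the convention $\calA_-|_{\vfocki{0}\oplus\vfocki{1}} = 0$), so $\calA_- \ell_\phi = 0$. The raising part $\calA_+ \ell_\phi$ lands in $\vfocki{2}$ with kernel
\begin{equation*}
\calA_+ \phiHat(p_1, p_2) \,=\, \frac{\iota\beta}{2}\bigl(p_1 \phiHat(p_1) + p_2 \phiHat(p_2)\bigr),
\end{equation*}
while $\calL_0\ell_\phi$ remains in $\vfocki{1}$ with kernel $-\tfrac{1}{2}|p|^2 \phiHat(p)$.

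To bound $\lVert \calA \ell_\phi\rVert_{\vsob{-1}{0}}^2$, the plan is to unfold the norm as an integral against $\mu_2(\dd p_{1:2}) = 2\VHat(p_1)\VHat(p_2)\dd p_1 \dd p_2$ with weight $(1+\tfrac{1}{2}|p_1+p_2|^2)^{-1}$, expand $|p_1\phiHat(p_1) + p_2\phiHat(p_2)|^2$, and use the $p_1 \leftrightarrow p_2$ symmetry of both the measure and the weight to collapse the diagonal and cross pieces to a single term of the form $|p_1|^2 |\phiHat(p_1)|^2$. A Fubini then fixes $p_1$ and integrates out $p_2$, and the resulting inner integral is precisely the one controlled by the inner integral bound (Proposition \ref{prp:iib}):
\begin{equation*}
\beta^2 \int_\bbR \frac{\VHat(p_2)}{1 + \tfrac{1}{2}|p_1 + p_2|^2} \dd p_2 \,\leqslant\, J^1(1),
\end{equation*}
uniformly in $p_1$. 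This yields a bound on $\lVert \calA \ell_\phi\rVert_{\vsob{-1}{0}}^2$ by a constant multiple of $J^1(1) \int \VHat(p)|p|^2 |\phiHat(p)|^2 \dd p$, which is the desired form up to the numerical constant.

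The bound on $\lVert \calL_0 \ell_\phi\rVert_{\vsob{-1}{0}}^2$ is easier. Since $\calL_0$ is a Fock-space multiplier preserving chaos level, one has directly
\begin{equation*}
\lVert \calL_0 \ell_\phi\rVert_{\vsob{-1}{0}}^2 \,=\, \int \frac{|p|^4/4}{1 + |p|^2/2} \VHat(p) |\phiHat(p)|^2 \dd p \,\leqslant\, \tfrac{1}{2}\int \VHat(p)|p|^2 |\phiHat(p)|^2 \dd p,
\end{equation*}
using the elementary inequality $(x^2/4)/(1+x/2) \leqslant x/2$ for $x = |p|^2 \geqslant 0$. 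Since $\calL \ell_\phi = \calL_0 \ell_\phi + \calA \ell_\phi$ is a sum of components living in orthogonal chaos levels ($\vfocki{1}$ and $\vfocki{2}$), the squared $\vsob{-1}{0}$-norm of the sum is the sum of squared norms, and one concludes.

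No serious obstacle is expected here — the computation is essentially Fubini plus the inner integral bound. The only points requiring a little care are the numerical constants arising from the factor $n!$ in $\mu_n$ and the $\frac{1}{n+1}$ in the definition of $\calA_+$, and the handling of the cross term $p_1 \phiHat(p_1) \overline{p_2 \phiHat(p_2)}$, which is absorbed by Cauchy–Schwarz after exploiting the $p_1 \leftrightarrow p_2$ symmetry.
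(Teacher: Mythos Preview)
Your direct Fock-space computation is correct and is essentially the $n=1$ case of the argument underlying Lemma~\ref{lem:1} (the Graded Sector Condition). The paper's own proof is shorter: it simply invokes Lemma~\ref{lem:1} with $\alpha=1$ to get $\lVert \calA \ell_\phi \rVert_{\vsob{-1}{0}}^2 \lesssim J^1(1)\lVert \ell_\phi \rVert_{\vsob{1}{1}}^2$ and then observes $\lVert \ell_\phi \rVert_{\vsob{1}{1}}^2 \lesssim \int \VHat(p)|p|^2|\phiHat(p)|^2 \dd p$. Your approach unpacks this black box, which is fine and more self-contained; note that both routes actually yield the bound with $\lesssim$ rather than the sharp $\leqslant$ appearing in the statement (the paper's proof itself writes $\lesssim$), so the numerical constants you were tracking need not be sharpened.
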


\begin{proof}
Due to the Lemma \ref{lem:1} in the case $\alpha = 1$, it holds that
\begin{align*}
\lVert \calA \ell_\phi \rVert_{\vsob{-1}{0}}^2 , \lVert \calL \ell_\phi \rVert_{\vsob{-1}{0}}^2 \lesssim J^1(1) \lVert \ell_\phi \rVert_{\vsob{1}{1}}^2 \lesssim J^1(1) \int \VHat(p) |p|^2 | \phiHat(p)|^2 \dd p
\end{align*}
The RHS is finite due to the decay of $\phiHat$.
\end{proof}

In addition, we give the following definition from \cite{GubinelPerkows20_InfinitesimalGenerator}, which ensures that a process $(\eta_t)_{t \geqslant 0}$ can be estimated in terms of the measure $\vfock$.
\begin{definition}[Incompressibility]\label{def:incomp}
Let $\eta=(\eta_t)_{t\geqslant 0}$ be a $C(\bbR_+, \calS'(\bbR))$ valued random variable. We say that $\eta$ is {\it incompressible} (with respect to $V$) if for all $T\geqslant 0$ and $f\in\mathcal{C}$ it holds
$$ \bbE[|f(\eta_t)|] \lesssim_T \|f\|_{\vfock}, \quad \forall t\in[0,T] $$
\end{definition}

We now define what it means to be an energy solution.
\begin{definition}[Energy solution]\label{def:energysolution}
Let $V \in \calS'(\bbR)$ satisfy Assumption \ref{ass:posdef} and let $\eta=(\eta_t)_{t\geqslant 0}$ be a $C(\bbR_{\geqslant 0}, \calS'(\bbR))$ valued random variable. We say that $\eta$ is an {\it energy solution} (with respect to V and $\calL$), if
\begin{enumerate}
\item There exists $p_0 \geqslant 1$ such that for all $p\in[1,p_0)$, the Itô trick, Definition \ref{def:itoTrick}, is satisfied with respect to $V$ and $p$.
\item The incompressibility condition is satisfied with respect to $V$, Definition \ref{def:incomp}.
\item For all $\phi \in \calS$ it holds that the following process is a martingale adapted to the natural filtration generated by $\eta$, in which $(\int_0^t \calA \ell_{\phi}( \eta_s) \dd s)_{t \geqslant 0}$ is defined thanks to Proposition \ref{prp:2},
\begin{equation}\label{eq:32}
M_t(\ell_\phi) = \eta_t(\phi) - \eta_0(\phi) - \int_0^t \calL_0 \ell_{\phi}(\eta_s) \dd s - \int_0^t \calA \ell_{\phi}(\eta_s) \dd s
\end{equation}
\item The quadratic variation of $M(\ell_\phi)$ is given by
\begin{equation}\label{eq:qvenergy}
\langle M(\ell_\phi) \rangle_t = \int_0^t|\eta_s(\nabla \phi)|^2 \dd s
\end{equation}
\end{enumerate}
\end{definition}
\begin{remark}
Definition \ref{def:energysolution} is taken from \cite{GubinelPerkows20_InfinitesimalGenerator}, where the term \textit{solution to the cylinder function martingale problem} is used instead of \textit{energy solution}.
\end{remark}
We will also make use of the following equivalent formulation.
\begin{proposition}
An equivalent notion of energy solution can be obtained by replacing the fourth condition with the following:
\begin{enumerate}
\item[4.] For all $\phi_1, \phi_2 \in \calS$, with the functional $h(\eta) := \eta(\phi_1) \eta(\phi_2)$ it holds that the following process is a martingale adapted to the natural filtration generated by $\eta$
$$ M_t(h) = h(\eta_t) - h(\eta_0) - \int_0^t \calL h(\eta_s) \dd s $$
\end{enumerate}
\end{proposition}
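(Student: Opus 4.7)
The plan is to reduce the equivalence between conditions 4 and 4' to a single Itô-type identity, whose verification amounts to a careful approximation argument. First, condition 3 ensures that $M_t(\ell_\phi)$ is a continuous martingale for every $\phi \in \calS$, so by polarisation condition 4 is equivalent to the joint statement $\langle M(\ell_{\phi_1}), M(\ell_{\phi_2})\rangle_t = \int_0^t \eta_s(\nabla\phi_1)\eta_s(\nabla\phi_2) \dd s$ for all $\phi_1, \phi_2 \in \calS$; by the uniqueness of continuous quadratic covariation, this is in turn equivalent to requiring that
\begin{equation*}
\bar N_t := M_t(\ell_{\phi_1}) M_t(\ell_{\phi_2}) - \int_0^t \eta_s(\nabla\phi_1)\eta_s(\nabla\phi_2) \dd s
\end{equation*}
be a martingale for every $\phi_1, \phi_2 \in \calS$. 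In particular, both conditions 4 and 4' follow simultaneously once one proves the unconditional identity $M_t(h) - \bar N_t = (\text{martingale})$, since this immediately yields ``$M_t(h)$ is a martingale iff $\bar N_t$ is a martingale''.

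Next, one verifies the algebraic identity
\begin{equation*}
\calL h = \ell_{\phi_1}\calL\ell_{\phi_2} + \ell_{\phi_2}\calL\ell_{\phi_1} + \ell_{\nabla\phi_1}\ell_{\nabla\phi_2}.
\end{equation*}
The $\calL_0$-part follows from formula \eqref{eq:47} applied to the quadratic cylinder function $h$; the $\calA$-part uses that $\calA$ acts by the Leibniz rule on products of linear functionals, which can be read off from its first-order differential representation in the smooth setting (Subsection \ref{sec:mollified-srbp}) and extended to the distributional case, or checked directly via the chaos formulas \eqref{eq:2}. Since $h$ lies in the finite chaos $H_0 \oplus H_2 \subset \vsob{1}{1}$, the Graded Sector Condition (Lemma \ref{lem:1}) gives $\calA h \in \vsob{-1}{0}$ and hence $\calL h \in \vsob{-1}{0}$, so that the integral $\int_0^t \calL h(\eta_s) \dd s$ is well-defined via the Itô trick (Definition \ref{def:i}).

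To establish the identity $M_t(h) - \bar N_t = (\text{martingale})$, I would approximate the distributional drift of $\eta(\phi_i)$ by classical drifts. Choose $g_i^n \in \calC$ with $g_i^n \to \calA\ell_{\phi_i}$ in $\vsob{-1}{0}$ and consider the continuous semimartingale
\begin{equation*}
\tilde\eta_t^{i,n} := \eta_0(\phi_i) + M_t(\ell_{\phi_i}) + \tfrac{1}{2}\int_0^t \eta_s(\Delta\phi_i)\dd s + \int_0^t g_i^n(\eta_s)\dd s,
\end{equation*}
which converges to $\eta_t(\phi_i)$ in $L^1_\bbP C_T$ by the Itô trick and whose drift is now absolutely continuous. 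Classical Itô product rule applied to $\tilde\eta_t^{1,n}\tilde\eta_t^{2,n}$ produces a decomposition with a genuine martingale part (two stochastic integrals of bounded semimartingale integrands against $M(\ell_{\phi_i})$), a Lebesgue-integral drift part involving $\tilde\eta_s^{i,n}$ times $\calL_0\ell_{\phi_j}(\eta_s) + g_j^n(\eta_s)$, and the quadratic covariation $\langle M(\ell_{\phi_1}), M(\ell_{\phi_2})\rangle_t$. Passing $n \to \infty$ and invoking the algebraic identity of the previous paragraph to reassemble the limiting drift into $\int_0^t \calL h(\eta_s)\dd s - \int_0^t \eta_s(\nabla\phi_1)\eta_s(\nabla\phi_2) \dd s$ delivers the desired identity.

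The hard part will be the passage to the limit in the cross-terms $\int_0^t \tilde\eta_s^{1,n} g_2^n(\eta_s)\dd s$, where both factors depend on $n$. The natural strategy is to decompose $\tilde\eta_s^{1,n} = \eta_0(\phi_1) + M_s(\ell_{\phi_1}) + \tfrac12\int_0^s \eta_r(\Delta\phi_1)\dd r + \int_0^s g_1^n(\eta_r)\dd r$ and treat the pieces separately: on the bounded-variation pieces (the first, third and fourth) one applies integration by parts to isolate stochastic integrals against $M(\ell_{\phi_1})$ whose limits exist by stochastic dominated convergence; on the martingale piece $M_s(\ell_{\phi_1}) g_2^n(\eta_s)$ one similarly integrates by parts, reducing the problem to $L^1_\bbP C_T$-convergence of $\int_0^t g_2^n(\eta_s)\dd s$ to $\int_0^t \calA\ell_{\phi_2}(\eta_s)\dd s$, which is provided by the Itô trick. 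The moment control needed to apply dominated convergence in these estimates is furnished by incompressibility combined with the slight higher integrability $p > 1$ in condition 1 of Definition \ref{def:energysolution}. Once these convergences are secured, the remaining algebraic rearrangements reproduce the identity $M_t(h) - \bar N_t = (\text{martingale})$ and the equivalence of conditions 4 and 4' follows.
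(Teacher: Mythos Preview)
Your overall strategy is sound and is the standard route the paper alludes to: reduce the equivalence to the unconditional statement that $M_t(h)-\bar N_t$ is a martingale, verify the Leibniz-type identity $\calL h = \ell_{\phi_1}\calL\ell_{\phi_2}+\ell_{\phi_2}\calL\ell_{\phi_1}+\ell_{\nabla\phi_1}\ell_{\nabla\phi_2}$, and establish the former via an It\^o product rule on semimartingale approximants $\tilde\eta^{i,n}$. The paper's own proof is merely a citation to \cite[Lemma~6.5]{CannizzarGiles25_InvariancePrinciple}, so at the level of strategy you are aligned with it.

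There is, however, a real gap in your handling of the ``hard part''. After integration by parts on the fourth piece, the term $\int_0^t A_1^n(s)\,g_2^n(\eta_s)\,\dd s$ becomes $A_1^n(t)A_2^n(t)-\int_0^t A_2^n(s)\,g_1^n(\eta_s)\,\dd s$, which still contains a product of a \emph{path functional} $A_2^n(s)$ and a pointwise functional $g_1^n(\eta_s)$ whose $\vfock$-norm blows up; no stochastic integral against $M(\ell_{\phi_1})$ is isolated, contrary to what you write, and the It\^o trick does not apply because $A_2^n(s)$ is not a cylinder function of $\eta_s$. Equivalently, splitting off $\eta_s(\phi_1)$ first leaves you with $\int_0^t(\tilde\eta^{1,n}_s-\eta_s(\phi_1))\,g_2^n(\eta_s)\,\dd s$, a genuine $0\times\infty$ problem in which the vanishing factor is path-dependent. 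Your appeal to ``slight higher integrability $p>1$'' does not close this; the stochastic-integral convergences you invoke also require moment bounds on $\langle M(\ell_{\phi_i})\rangle_t$, which you cannot assume given that identifying this bracket is precisely condition~4.

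This obstruction is exactly the one the paper meets in Theorem~\ref{thm:etoc} and resolves there via H\"older regularity of the additive functionals (Lemma~\ref{Lemmaforeimpliesc}) and a Young-integral estimate, at the price of $p_0>4$. For the present proposition only quadratic $h$ are needed, and the cited argument presumably exploits this simplification to avoid the full machinery; but your sketch does not yet supply such an argument, and as written the limit step for the diagonal cross-term is not justified.
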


\begin{proof}
This is a standard argument, see for example \cite[Lemma 6.5]{CannizzarGiles25_InvariancePrinciple}.
\end{proof}

\subsection{Cylinder function martingale problem}\label{sec:cylind-funct-mart}
Roughly speaking, an energy solution is a process which solves the martingale problem for time-independent observables $h \in \calC_p$ that are either linear or quadratic. We now give a second notion of solution that involves a stronger condition, in which the martingale problem is satisfied for a larger class: for all time-dependent cylinder functions, $h \in t\calC$. The main result of this subsection, Theorem \ref{thm:etoc}, shows that in fact an energy solution already satisfies this seemingly stronger notion of solution.

The notion of cylinder function martingale problem involves additive functionals $\int \calA h \dd s$ for $h \in t \calC$. As in the case of energy solutions, these are to be interpreted in the sense of Definition \ref{def:i}, for which we require the following analogue of Proposition \ref{prp:2}:
\begin{proposition}
For $h \in t \calC$ it holds that $\calA h, \calL h \in L^2_{T}\vsob{-1}{0}$.
\end{proposition}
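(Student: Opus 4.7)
The plan is to reduce the proposition to two ingredients already established in the paper: the inclusion $t\calC\subset L^2_T\vsob{1}{1}$ (stated in Section \ref{sec:notat-wien-space}) and the Graded Sector Condition (Lemma \ref{lem:1}). Since $\calL=\calL_0+\calA$ with $\calA=\calA_++\calA_-$, it suffices to bound $\calA h$ and $\calL_0 h$ separately in $L^2_T\vsob{-1}{0}$, and in each case the bound will be linear in a norm of $h$ that is finite by the inclusion.

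For the $\calA$ piece, I would simply apply Lemma \ref{lem:1} with $\alpha=1$, which gives the bounded maps $\calA_\pm:\vsob{1}{1}\to\vsob{-1}{0}$. Integrating this pointwise-in-$t$ estimate squared over $[0,T]$ yields
\begin{equation*}
\lVert \calA h\rVert_{L^2_T\vsob{-1}{0}}^2 \;\leqslant\; 2J^1(1)(2^0+2^1)\,\lVert h\rVert_{L^2_T\vsob{1}{1}}^2,
\end{equation*}
and the right-hand side is finite because $h\in t\calC\subset L^2_T\vsob{1}{1}$.

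For the $\calL_0$ piece, I would argue that $\calL_0:\vsob{1}{0}\to\vsob{-1}{0}$ is bounded with operator norm at most $1$. This is immediate from the definition: on $\vfocki{n}$ the composition $(1-\calL_0)^{-1/2}\calL_0(1-\calL_0)^{-1/2}$ is multiplication by $-\tfrac{1}{2}|p_{[1:n]}|^2/(1+\tfrac{1}{2}|p_{[1:n]}|^2)$, whose modulus is bounded by $1$, and $\calN$ commutes with these multiplication operators so the bound passes to the graded norms. Combined with the trivial inclusion $\vsob{1}{1}\hookrightarrow\vsob{1}{0}$ (the factor $(1+\calN)^{1/2}$ has eigenvalues $\geqslant 1$), this gives $\lVert\calL_0 h\rVert_{L^2_T\vsob{-1}{0}}\leqslant \lVert h\rVert_{L^2_T\vsob{1}{1}}<\infty$.

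Combining the two estimates yields $\calL h\in L^2_T\vsob{-1}{0}$, completing the proof. There is no real obstacle here: the graded Sobolev scale was designed precisely so that $\calL$ loses two $\calL_0$-derivatives and one $\calN$-degree, exactly as used above. The only point worth being careful about is that the Graded Sector Condition in Lemma \ref{lem:1} is stated chaos-wise at fixed time, but since the resulting bound is linear in the $\vsob{1}{1}$ norm, it integrates over $t\in[0,T]$ without further work.
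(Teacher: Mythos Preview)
Your proof is correct and follows essentially the same route as the paper: apply the Graded Sector Condition (Lemma \ref{lem:1}) with $\alpha=1$ to handle $\calA$, and use the finiteness of $\lVert h\rVert_{L^2_T\vsob{1}{1}}$ for $h\in t\calC$. Your treatment is in fact more explicit than the paper's, which leaves the $\calL_0$ piece implicit, whereas you spell out the elementary bound $\calL_0:\vsob{1}{0}\to\vsob{-1}{0}$.
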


\begin{proof}
As in the proof of Proposition \ref{prp:2}, we apply Lemma \ref{lem:1}, and note that $J^1(1) < \infty$. Therefore the result follows provided that $\lVert h \rVert_{L^2_T\vsob{1}{1}} < \infty$. This holds because $\calN, \calL_0$ are closed on $t \calC_p$.
\end{proof}

\begin{definition}[Cylinder function martingale problem]\label{def:mcyl}
Let $V$ satisfy Assumption \ref{ass:posdef}. Let $\eta=(\eta_t)_{t \geqslant 0}$ be a $C(\bbR_+, \calS'(\bbR))$ valued random variable. We say that $\eta$ satisfies the {\it cylinder function martingale problem} associated to $\calL$ if
\begin{enumerate}
\item The Itô trick, Definition \ref{def:itoTrick}, is satisfied with respect to $V$ and $p = 1$.
\item The incompressibility condition is satisfied with respect to $V$, Definition \ref{def:incomp}
\item For all cylinder functions $h \in t\calC$ it holds that the following process is a martingale adapted to the natural filtration generated by $\eta$, in which $(\int_0^t \calA h(s, \eta_s) \dd s)_{t \geqslant 0}$ is defined thanks to Proposition \ref{prp:2},
\begin{equation}\label{eq:33}
M_t(h) = h(t, \eta_t) - h(0, \eta_0) - \int_0^t (\partial_t+\calL_0) h(s, \eta_s) \dd s - \int_0^t \calA h(s, \eta_s) \dd s
\end{equation}
\end{enumerate}
\end{definition}

\begin{theorem}\label{thm:etoc}
Let $V \in \calS'(\bbR)$ satisfy Assumption \ref{ass:posdef}. If $\eta$ is an energy solution, Definition \ref{def:energysolution}, in which $p_0 > 4$, then it holds that $\eta$ is a solution to the cylinder function martingale problem, Definition \ref{def:mcyl}.
\end{theorem}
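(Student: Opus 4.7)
The plan is to apply a generalised Itô-type formula to the $\bbR^k$-valued composite process $X_t := (\eta_t(\phi_1),\ldots,\eta_t(\phi_k))$, thereby reducing the cylinder-function martingale problem for $h \in t\calC$ of the form $h(t,\omega) = \tilde h(t, \omega(\phi_1),\ldots,\omega(\phi_k))$ with $\tilde h \in \calS(\bbR^{k+1})$ to the linear and quadratic cases already built into Definition \ref{def:energysolution}. By that definition each coordinate decomposes as $\eta_t(\phi_i) = \eta_0(\phi_i) + M^{\phi_i}_t + A^{\phi_i}_t$, where $M^{\phi_i}_t := M_t(\ell_{\phi_i})$ is a continuous martingale and $A^{\phi_i}_t := \int_0^t \calL\ell_{\phi_i}(\eta_s)\,\dd s$ is the Itô-trick extension from Definition \ref{def:i}. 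As a preliminary I would extract the cross-brackets $\langle M^{\phi_i}, M^{\phi_j}\rangle_t = \int_0^t \eta_s(\nabla\phi_i)\eta_s(\nabla\phi_j)\,\dd s$ by polarising \eqref{eq:qvenergy}, and verify by direct computation on cylinder functions the identity (valid in $L^2_T\vsob{-1}{0}$)
\begin{equation*}
(\partial_t + \calL)h = \partial_t\tilde h + \sum_{i}\partial_{x_i}\tilde h \cdot \calL\ell_{\phi_i} + \tfrac{1}{2}\sum_{i,j}\partial_{x_ix_j}\tilde h \cdot \omega(\nabla\phi_i)\omega(\nabla\phi_j),
\end{equation*}
which is precisely the drift a formal Itô formula applied to $\tilde h(t,X_t)$ would produce, given the decomposition and brackets just identified.

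Since $A^{\phi_i}$ is not of classical finite variation, the classical Itô formula is not directly applicable. I would proceed by regularisation: pick a sequence $(g^{i,\varepsilon})_{\varepsilon > 0} \subset t\calC$ of bounded cylinder functions with $g^{i,\varepsilon} \to \calL\ell_{\phi_i}$ both in $L^2_T\vsob{-1}{0}$ and in $L^2_T\vfock$ (the latter being possible since $\calL\ell_{\phi_i}$ is a polynomial of degree at most two in Gaussians), and set
\begin{equation*}
X^{i,\varepsilon}_t := \eta_0(\phi_i) + M^{\phi_i}_t + \int_0^t g^{i,\varepsilon}(s,\eta_s)\,\dd s,
\end{equation*}
which is a genuine continuous semimartingale sharing its martingale part with $\eta_t(\phi_i)$. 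Classical multivariate Itô applied to $\tilde h(t, X^\varepsilon_t)$ then produces a pathwise identity whose drift consists of $\int_0^t\partial_t\tilde h(s,X^\varepsilon_s)\,\dd s$, the Lebesgue integrals $\sum_i\int_0^t\partial_{x_i}\tilde h(s,X^\varepsilon_s) g^{i,\varepsilon}(s,\eta_s)\,\dd s$, and the covariation term $\tfrac12\sum_{ij}\int_0^t\partial_{x_ix_j}\tilde h(s,X^\varepsilon_s) \eta_s(\nabla\phi_i)\eta_s(\nabla\phi_j)\,\dd s$, together with the stochastic integral $\sum_i\int_0^t\partial_{x_i}\tilde h(s,X^\varepsilon_s)\,\dd M^{\phi_i}_s$.

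It remains to pass to the limit $\varepsilon\to 0$. The boundary and Lebesgue-covariation terms converge in $L^1$ by bounded convergence (as $\tilde h$ and its derivatives are Schwartz) combined with the $L^2(\Omega)$-convergence of $X^{i,\varepsilon}_t$ to $\eta_t(\phi_i)$, the latter coming from the Itô trick at $p = 2$ applied to $g^{i,\varepsilon} - \calL\ell_{\phi_i}$. The Lebesgue drift $\sum_i\int\partial_{x_i}\tilde h(X^\varepsilon) g^{i,\varepsilon}(\eta)\,\dd s$ is split as $\partial_{x_i}\tilde h(X^\varepsilon)[g^{i,\varepsilon} - \calL\ell_{\phi_i}] + [\partial_{x_i}\tilde h(X^\varepsilon) - \partial_{x_i}\tilde h(X)]\calL\ell_{\phi_i}$; the first piece is handled using boundedness of $\partial_{x_i}\tilde h$ and the $L^2(\pi)$-convergence of $g^{i,\varepsilon}$ combined with incompressibility, while the second is handled using the Lipschitz property of $\partial_{x_i}\tilde h$ and the $L^2(\Omega)$-bound on $X^\varepsilon - X$. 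The decisive ingredient is the convergence of the stochastic integral
\begin{equation*}
\sum_i \int_0^t \partial_{x_i}\tilde h(s, X^\varepsilon_s)\,\dd M^{\phi_i}_s \longrightarrow \sum_i \int_0^t \partial_{x_i}\tilde h(s, X_s)\,\dd M^{\phi_i}_s
\end{equation*}
in $L^1$: by Burkholder--Davis--Gundy and the cross-bracket identity, this reduces to controlling $\bbE\!\left[\int_0^T |X^\varepsilon_s - X_s|^2\,\eta_s(\nabla\phi_i)^2\,\dd s\right]^{1/2}$, and Cauchy--Schwarz in $\Omega$ then demands an $L^4(\Omega)$-estimate on $X^\varepsilon - X$ (with $\bbE[\eta_s(\nabla\phi_i)^4]$ controlled by incompressibility). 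This $L^4$-estimate is precisely what the Itô trick at $p = 4$ supplies, and it is the point at which the hypothesis $p_0 > 4$ is used.

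The principal obstacle is thus the stochastic-integral limit just described: because $A^{\phi_i}$ is not of finite variation, classical semimartingale stability results are unavailable, and every drift-like integral must be threaded through the Itô-trick extension while simultaneously respecting the non-cylinder, path-dependent structure of $\partial_{x_i}\tilde h(X^\varepsilon_s)$. Once all five limits have been executed, matching the surviving terms against the identity of the first paragraph identifies
\begin{equation*}
h(t,\eta_t) - h(0,\eta_0) - \int_0^t (\partial_t + \calL_0)h(s,\eta_s)\,\dd s - \int_0^t \calA h(s,\eta_s)\,\dd s = \sum_i \int_0^t \partial_{x_i}\tilde h(s, \eta_s)\,\dd M^{\phi_i}_s,
\end{equation*}
and the right-hand side is a continuous square-integrable martingale by BDG and incompressibility. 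This is exactly the statement that $M_t(h)$ in \eqref{eq:33} is a martingale, completing the proof.
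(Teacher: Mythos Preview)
Your overall strategy---regularise the additive functional to obtain a genuine semimartingale, apply the classical It\^o formula, and pass to the limit---is the same as the paper's. The gap lies in your treatment of the Lebesgue drift term, and it stems from a false claim: you assert that $g^{i,\varepsilon}\to\calL\ell_{\phi_i}$ in $L^2_T\vfock$ ``since $\calL\ell_{\phi_i}$ is a polynomial of degree at most two in Gaussians''. In the singular setting this is wrong. The second-chaos kernel of $\calA_+\ell_{\phi_i}$ is (up to symmetrisation) $p\,\phiHat_i(p)$, and its $\vfocki{2}$-norm involves $\int\VHat(q)\,\dd q$, which diverges for the canonical choice $V=\delta$. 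Proposition~\ref{prp:2} only places $\calA\ell_{\phi_i}$ in $\vsob{-1}{0}$, not in $\vfock$; the whole point of the It\^o-trick machinery is that $\calA\ell_{\phi_i}(\eta_s)$ is \emph{not} defined pointwise in $s$.

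This breaks both pieces of your split $\partial_{x_i}\tilde h(X^\varepsilon)[g^{i,\varepsilon}-\calL\ell_{\phi_i}]+[\partial_{x_i}\tilde h(X^\varepsilon)-\partial_{x_i}\tilde h(X)]\calL\ell_{\phi_i}$: the first cannot be controlled by $\|g^{i,\varepsilon}-\calL\ell_{\phi_i}\|_{L^2(V)}$ (there is no such limit), and the second is not even a well-defined integrand. The paper confronts exactly this obstacle and calls it the ``hard term''. Its resolution uses the specific spectral truncation $\calA^m=\ic_{-\calL_0\leqslant m}\calA$ (so that $\calA^m\ell_{\phi_i}\in L^2(V)$ for each $m$), splits as $[\partial_i h(\eta^m)-\partial_i h(\eta)]\calA^m\ell_{\phi_i}+\partial_i h(\eta)[\calA^m-\calA]\ell_{\phi_i}$, handles the second piece by the It\^o trick in $\vsob{-1}{0}$, and for the first piece---where both factors depend on $m$ and the naive H\"older bound blows up---invokes Young's inequality $\bigl|\int[\ldots]\,\dd A^m_s\bigr|\leqslant\|\eta^m-\eta\|_{C^\alpha_T}\|A^m\|_{C^\beta_T}$ with $\alpha+\beta>1$. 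Lemma~\ref{Lemmaforeimpliesc} supplies the required uniform H\"older bounds on $A^m_t=\int_0^t\calA^m\ell_\phi(\eta_r)\,\dd r$ in $C^\gamma_T$ for $\gamma<3/4-1/p_0$, via a two-scale interpolation between the It\^o trick and incompressibility. This is where $p_0>4$ is genuinely used: one needs $2(3/4-1/p_0)>1$. Your attribution of the $p_0>4$ hypothesis to the stochastic-integral limit is misplaced---that step needs only $p_0>2$.
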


The proof follows a strategy similar to that which can be found in \cite{GrafPerkPopa24_EnergySolutions,GubinelPerkows18_EnergySolutions,GubinelPerkows20_InfinitesimalGenerator}. The idea is as follows. All there is to prove is that the RHS of \eqref{eq:33} is a martingale. Writing $h(t, \eta) = h(t, \omega(\phi_1), \ldots, \omega(\phi_k))$, in which $\phi_i \in \calS(\bbR)$, it holds that the RHS of \eqref{eq:33} is equal to
\begin{equation}\label{eq:48}
\begin{split}
h(t, \eta_t) &- h(0, \eta_0) - \int_0^t \partial_t h (s, \eta_s) \dd s \\
&- \int_0^t \Big( \thalf \sum_{i = 1}^k \partial_i h(s, \eta_s) \eta_s(\Delta\phi_i) + \sum_{i,j = 1}^k \partial_{ij}^2 h(s, \eta_s) \eta_s(\nabla\phi_i)\eta_s(\nabla\phi_j) \Big) \dd s \\
&- \int_0^t \sum_{i=1}^k \partial_i h(s, \eta_s) \mathcal{A} \ell_{\phi_i}(\eta_s) \dd s
\end{split}
\end{equation}
Indeed, this is a simple consequence of the fact that $\calA h(s, \eta) = \sum_{i=1}^k \partial_i h(s, \eta) \mathcal{A} \ell_{\phi_i}(\eta_s)$ as elements of $L^2_T\vsob{-1}{0}$.

If we could apply Itô's formula to the process $h(\eta_t) = h(t, \eta_t(\phi_1), \ldots, \eta_t(\phi_k))$, in which the ``semi-martingale'' $(\eta_t(\phi))_{t \geqslant 0}$ is given according to \eqref{eq:32}, then this would exactly give us \eqref{eq:48}. However, $(\eta_t(\phi))_{t \geqslant 0}$ is not a semi-martingale, because the additive functional term $\int_0^t \calA \ell_{\phi}( \eta_s) \dd s$ in \eqref{eq:32} is not really an integral in time, in particular, it is not a finite variation process. Therefore, an approximation needs to be made.

In this proof it shall be convenient to use an abstract approximation of $\mathcal{A}$ given according to
\begin{equation}\label{eq:54}
\mathcal{A}^m_{\pm} := 1_{- \mathcal{L}_0 \leqslant m} \mathcal{A}_{\pm}, \qquad \mathcal{A}^m = \mathcal{A}^m_+ + \mathcal{A}^m_-
\end{equation}
for fixed $m \in \bbN$. In addition, we write $(C^\alpha_T, \lVert \cdot \rVert_{C^\alpha_T})$ for the space of H\"{o}lder continuous functions $f : [0, T] \rightarrow \bbR$ with exponent $\alpha \in (0, 1]$.

\begin{proof}[Proof of Theorem \ref{thm:etoc}]
Define the semi-martingale
\begin{equation*}
\eta^m_t(\phi_i) := \eta_0(\phi_i) + \int_0^t \eta_s(\Delta \phi_i) \dd s + \int_0^t \mathcal{A}^m \ell_{\phi_i}(\eta_s) \dd s + M_t(\phi_i)
\end{equation*}
where $M_t(\phi_i)$ is the martingale in \ref{eq:32}. Applying Itô's formula to $h(t, \eta^m_t)$, we get that the following process is a martingale:
\begin{equation}\label{approxmartingaleeimpliesc}
\begin{split}
h(t, \eta_t^m) &- h(0, \eta_0^m) - \int_0^t \partial_t h (s, \eta^m_s) \dd s \\
&- \int_0^t \Big( \thalf \sum_{i = 1}^k \partial_i h(s, \eta^m_s) \eta_s(\Delta\phi_i) + \sum_{i,j = 1}^k \partial_{ij}^2 h(s, \eta^m_s) \eta_s(\nabla\phi_i)\eta_s(\nabla\phi_j) \Big) \dd s \\
&- \int_0^t \sum_{i=1}^k \partial_i h(s, \eta^m_s) \mathcal{A}^m \ell_{\phi_i}(\eta_s) \dd s
\end{split}
\end{equation}
Note that $\eta$ and $\eta^m$ are both present in \eqref{approxmartingaleeimpliesc}.

Thanks to \eqref{eq:48}, it remains to show that we can pass to the limit $m \rightarrow \infty$ in $L^1(\bbP)$ and obtain \eqref{eq:48}. This is sufficient because the martingale property is maintained in the limit. We refer to all of the terms in \eqref{approxmartingaleeimpliesc} except the last as the ``easy terms'', for their convergence is straightforward, as we now demonstrate. The last term, the ``hard term'', will require a finer argument, which will depend on Lemma \ref{Lemmaforeimpliesc}.

{\it The ``easy terms'' of \eqref{approxmartingaleeimpliesc}.} We begin by writing, for some $\phi = \phi_i$,
\begin{equation}\label{eq:62}
\eta^m_s(\phi) - \eta_s(\phi) = \int_0^s (\calA^m - \mathcal{A}) \ell_{\phi}(\eta_r) \dd r
\end{equation}
Applying the Itô trick for $p \leqslant p_0$, Definition \ref{def:itoTrick}, we obtain that for some constant $C>0$ independent of $m$, it holds
$$  \bbE\Big[\sup_{0 \leqslant s \leqslant t} \Big|\int_0^s (\calA^m - \mathcal{A}) \ell_{\phi}(\eta_r) \dd r \Big|^p \Big] \leqslant C \lVert (\calA^m_+ - \mathcal{A}_+) \ell_{\phi} \rVert_{\vsob{-1}{0}}^p $$
Note that the factor $\sqrt{2p-1}^\calN$ is of no concern, because all terms are in chaos no larger than $n = 2$, and so this factor is absorbed into the constant $C$. Moreover, there is no contribution from $(\calA^m_- - \mathcal{A}_-) \ell_{\phi}$. We claim that $\lVert (\calA^m_+ - \mathcal{A}_+) \ell_{\phi} \rVert_{\vsob{-1}{0}} \rightarrow 0$ as $m \rightarrow \infty$ for any $\phi \in \calS(\bbR)$. Indeed, in a similar fashion to \eqref{eq:40}, we compute
$$ \lVert (\calA^m - \mathcal{A}) \ell_{\phi} \rVert_{\vsob{-1}{0}}^2 \leqslant \beta^2  \int_{\thalf |p + q|^2 \geqslant m} \frac{|p \phiHat(p)|^2}{1 + \thalf |p+q|^2} \mu_2(\dd p, \dd q) $$
Taking the integral in $q$ on the inside, and writing $M = \sup_{z \in \bbZ} \int_z^{z+1} \VHat(q) \dd q$, we obtain
$$ \lVert (\calA^m - \mathcal{A}) \ell_{\phi} \rVert_{\vsob{-1}{0}}^2 \leqslant M \beta^2 \int \VHat(p) |p \phiHat(p)|^2 \dd p \sum_{j = m}^\infty \frac{1}{1 + \thalf j^2}  $$
It holds that $\int \VHat(p) |p \phiHat(p)|^2 \dd p < \infty$ because $|p \phiHat(p)|^2$ decays faster than any polynomial. In particular, we have the claim. Combining, we obtain that for all $p \leqslant p_0$, it holds
\begin{equation}\label{eq:49}
\bbE[\sup_{0 \leqslant s \leqslant t} |\eta^m_s(\phi) - \eta_s(\phi)|^p]  \rightarrow 0 \quad \text{ as } m \rightarrow \infty
\end{equation}
In particular, using a simple union bound, \eqref{eq:49} holds not only in the case where $(\eta_t(\phi))_{t \geqslant 0}$ is a $\bbR$-valued process, but also in the case where $\phi = (\phi_1, \ldots, \phi_k)$ for some $\phi_i \in \calS(\bbR)$, in which case $(\eta_t(\phi))_{t \geqslant 0} = (\eta_t(\phi_1), \ldots, \eta_t(\phi_k))_{t \geqslant 0}$ is a $\bbR^k$-valued process.

To handle the first of the ``easy terms'', we must show $\bbE[|h(t, \eta^m_t) - h(t, \eta_t)|] \rightarrow 0$ as $m \rightarrow \infty$, but this follows directly from \eqref{eq:49}, simply because $h(t, \eta_t)$ is Lipschitz in the variable $\eta_t(\phi) \in \bbR^k$.
The remaining ``easy terms'' are handled as follows. We take just one of them as an example. Consider the term, with $f(\omega) = \omega(\nabla\phi_i)\omega(\nabla\phi_j)$,
\begin{equation}\label{eq:61}
\Big|\int_0^t \big( \partial_{ij}^2 h(s, \eta^m_s) - \partial_{ij}^2 h(s, \eta_s) \big) f(\eta_s) \dd s  \Big| \lesssim \sup_{0 \leqslant s \leqslant t} |\eta^m_s(\phi) - \eta_s(\phi)| \, \int_0^t |f(\eta_s)| \dd s
\end{equation}
where in the last line we used that $\partial_{ij}^2 h$ is also Lipschitz. Taking expectations and using H\"{o}lder with conjugate indices, $p, q > 1$, $p^{-1} + q^{-1} = 1$, we obtain
\begin{equation}\label{eq:50}
\begin{split}
\bbE\Big[ \Big|\int_0^t \big( \partial_{ij}^2 h(s, \eta^m_s) &- \partial_{ij}^2 h(s, \eta_s) \big) f(\eta_s) \dd s  \Big| \Big] \\
&\lesssim \bbE[ \sup_{0 \leqslant s \leqslant t} |\eta^m_s(\phi) - \eta_s(\phi)|^p ]^{1/p} \bbE\Big[ \Big(\int_0^t |f(\eta_s)| \dd s \Big)^q \Big]^{1/q}
\end{split}
\end{equation}
The first factor goes to zero as $m \rightarrow \infty$ by \eqref{eq:49}, provided that we choose $p \leqslant p_0$, and the second factor is finite by incompressibility, and it does not depend on $m$.

{\it The ``hard term'' of \eqref{approxmartingaleeimpliesc}.} We begin with the simple bound
\begin{align}
\bbE&\Big[ \Big| \int_0^t \sum_{i=1}^k \partial_i h(s, \eta^m_s) \mathcal{A}^m \ell_{\phi_i}(\eta_s) \dd s - \int_0^t \sum_{i=1}^k \partial_i h(s, \eta_s) \mathcal{A} \ell_{\phi_i}(\eta_s) \dd s \Big| \Big] \nonumber \\
&\leqslant \sum_{i=1}^k \bbE\Big[ \Big| \int_0^t \big( \partial_i h(s, \eta^m_s) - \partial_i h(s, \eta_s) \big) \mathcal{A}^m \ell_{\phi_i}(\eta_s) \dd s \Big| \Big] \label{eq:52} \\
&\, + \sum_{i=1}^k \bbE\Big[ \Big| \int_0^t  \big( \partial_i h(s, \eta_s) \mathcal{A}^m \ell_{\phi_i}(\eta_s) - \partial_i h(s, \eta_s) \mathcal{A} \ell_{\phi_i}(\eta_s) \big) \dd s \Big| \Big] \label{eq:53}
\end{align}
We need to show that both \eqref{eq:52} and \eqref{eq:53} go to zero as $m \rightarrow \infty$.

The term \eqref{eq:53} is easily controlled via the Itô trick. Indeed, it is sufficient to show that $\partial_i h \mathcal{A}^m \ell_{\phi_i} \rightarrow \partial_i h \mathcal{A} \ell_{\phi_i}$ in $L^2_T\vsob{-1}{0}$. We already showed $\mathcal{A}^m \ell_{\phi_i} \rightarrow \mathcal{A} \ell_{\phi_i}$, and therefore the result follows by continuity of the multiplication map $f \mapsto hf$ in $L^2_T\vsob{-1}{0}$, in which the factor $h \in \calC$ is a fixed cylinder function.

As for the term \eqref{eq:52}, if we were to proceed in the same way as we did for the ``easy terms'' then we run intro problems: in this case, the second term in \eqref{eq:50} would be $m$-dependent, and can only be controlled with a bound that diverges as $m \rightarrow \infty$. The key observation is to note that we have sufficient H\"{o}lder regularity so that we can replace the estimate \eqref{eq:61} with Young's inequality.

For $j \in 1{:}k$, write $A^m_t(\phi_j) := \int_0^t \mathcal{A}^m \ell_{\phi_j} (\eta_r) \dd r$, and $A_t(\phi_j) := \int_0^t \mathcal{A} \ell_{\phi_j} (\eta_r) \dd r$, then using the Lipschitz property of $\partial_ih$, we obtain via Young's inequality that for $\alpha, \beta \in (0, 1]$ such that $\alpha + \beta > 1$, it holds:
\begin{align*}
\Big| \int_0^t \big( \partial_i h(s, \eta^m_s) - \partial_i h(s, \eta_s) \big) \mathcal{A}^m \ell_{\phi_i}(\eta_s) \dd s \Big| &\leqslant \| \eta(\phi) - \eta^m(\phi) \|_{C^{\alpha}_T} \| A^m_t(\phi_i) \|_{C^{\beta}_T}  \\
&\leqslant \Big(\sum_{i = 1}^k \| A^m_t(\phi_j) - A_t(\phi_j) \|_{C^{\alpha}_T} \Big) \| A^m_t(\phi_i) \|_{C^{\beta}_T}
\end{align*}
where in the last line we used \eqref{eq:62}. We need to show that this term goes to zero in expectation as $m \rightarrow \infty$. Provided that $p_0 > 4$, we may choose $\alpha, \beta \in (0, 3/4 - 1/p_0)$ such that $\alpha + \beta > 1$. At this point the result follows from applying Cauchy-Schwartz and invoking Lemma \ref{Lemmaforeimpliesc}.
\end{proof}

\begin{lemma}\label{Lemmaforeimpliesc}
Let $\eta$ be an energy solution, Definition \ref{def:energysolution}, in which it is assumed that the Itô trick holds for all $p \in [1, p_0)$. Let $\calA^m$ be the approximation given in \eqref{eq:54} and for fixed $\phi \in \calS(\bbR)$, define $(A^m_t)_{t \geqslant 0}$ to be the finite-variation process, $A^m_t := \int_0^t \mathcal{A}^m \ell_\phi (\eta_r) \dd r$, and also define $(A_t)_{t \geqslant 0}$ to be the process $A_t := \int_0^t \mathcal{A} \ell_\phi (\eta_r) \dd r$, whose existence follows from Proposition \ref{prp:2}. Then, for all $\alpha \in (0, 3/4 - 1/p_0)$, $T \geqslant 0$ it holds
\begin{align}
\sup_m \bbE [\| A^m \|_{C^\alpha_T}^{p_0}], \: \bbE [\| A \|_{C^\alpha_T}^{p_0}] &< \infty \label{eq:56}\\
\lim_{m \rightarrow \infty} \bbE [\| A - A^m \|_{C^\alpha_T}^{p_0}] &= 0 \label{eq:58}
\end{align}
\end{lemma}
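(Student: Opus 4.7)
The strategy is to establish, uniformly in $m$ and for every $p \in [1, p_0)$, the moment bound
\begin{equation*}
\bbE |A^m_t - A^m_s|^p \leq C_p (t-s)^{3p/4}, \qquad 0 \leq s \leq t \leq T,
\end{equation*}
together with the analogous estimate for $A$. The Garsia-Rodemich-Rumsey inequality then converts this into $\bbE \|A^m\|_{C^\alpha_T}^p < \infty$ uniformly in $m$ (and the same for $A$) for any $\alpha < 3/4 - 1/p$. Letting $p \to p_0^-$ and appealing to Fatou (the $p$-dependence of the constants from BDG and hypercontractivity is polynomial, hence bounded) yields \eqref{eq:56}. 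For \eqref{eq:58}, the Itô trick applied directly to the additive functional with integrand $(\calA-\calA^m)\ell_\phi$ gives $\bbE\|A - A^m\|_{L^\infty_T}^p \lesssim m^{-p/4}$, and combining this with the uniform Hölder bound through interpolation between $L^\infty_T$ and $C^{\gamma}_T$ for $\gamma \in (\alpha, 3/4 - 1/p_0)$ concludes.

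To obtain the $(t-s)^{3p/4}$ bound I interpolate two complementary estimates, with the interpolation parameter being the spectral cut-off $m_0$. First, the Itô trick applied to the time-independent cylinder function $\calA^m\ell_\phi$ (which lies purely in chaos~$2$, since $\calA_- \ell_\phi = 0$, so the factor $\sqrt{2p-1}^{\calN}$ reduces to a constant) yields
$$\bbE |A^m_t - A^m_s|^p \lesssim (t-s)^{p/2} \|\calA^m \ell_\phi\|_{\vsob{-1}{0}}^p,$$
and the contraction $\|1_{-\calL_0 \leq m}\|_{\mathrm{op}} \leq 1$ together with Proposition \ref{prp:2} makes the right-hand side uniformly bounded in $m$. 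Second, incompressibility together with hypercontractivity in chaos~$2$ and the auxiliary computation $\|\calA^m \ell_\phi\|_{L^2_\pi}^2 \lesssim \sqrt{m}\int |p\phiHat(p)|^2 \VHat(p)\dd p$ (where the $\sqrt{m}$ factor comes from $\int_{|p_1+p_2|\leq\sqrt{2m}} \VHat(p_2)\dd p_2 \lesssim \sqrt{m}$ by Assumption \ref{ass:posdef}), followed by Minkowski's inequality in time, gives
$$\bbE |A^m_t - A^m_s|^p \lesssim (t-s)^p m^{p/4}.$$
Setting $m_0 := (t-s)^{-1}$ and decomposing $A^m = A^{m_0} + (A^m - A^{m_0})$ when $m \geq m_0$, I bound the first piece by Minkowski as $(t-s)\,m_0^{1/4} = (t-s)^{3/4}$, and the second by the Itô trick applied to the additive functional with integrand $(\calA^m - \calA^{m_0}) \ell_\phi$ together with the tail decay
$$\|1_{-\calL_0 > m_0}\,\calA \ell_\phi\|_{\vsob{-1}{0}} \lesssim m_0^{-1/4},$$
(a direct consequence of Assumption \ref{ass:posdef} applied to an inner integral similar to $J^1$ in Proposition \ref{prp:iib}) to obtain $(t-s)^{1/2} m_0^{-1/4} = (t-s)^{3/4}$; the complementary regime $m < m_0$ is covered directly by Minkowski alone, since there $(t-s) m^{1/4} \leq (t-s)^{3/4}$. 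The identical decomposition with $A$ in place of $A^m$ (and with the tail-decay bound for the Itô trick piece) handles the estimate for $A$.

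The main obstacle is achieving Hölder regularity strictly greater than $1/2$, which is crucial for the Young-integral argument in Theorem \ref{thm:etoc}, where one needs $\alpha + \beta > 1$. A naive combination of the Itô trick and Kolmogorov's criterion yields only $\alpha < 1/2 - 1/p_0$, insufficient for this purpose. The key new input is the hypercontractive Minkowski bound together with the precise $m^{-1/4}$ spectral tail decay of $\calA \ell_\phi$ in $\vsob{-1}{0}$, which allows the balancing at $m_0 = (t-s)^{-1}$ responsible for the improved exponent $3/4 - 1/p_0$.
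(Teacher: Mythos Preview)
Your argument is correct and follows essentially the same route as the paper: both combine an incompressibility/hypercontractivity estimate (giving $(t-s)^p$ with polynomial growth in the cut-off) and an It\^o-trick estimate (giving $(t-s)^{p/2}$ with spectral tail decay), balance them at $m_0 \sim (t-s)^{-1}$, then apply Kolmogorov/GRR and finish \eqref{eq:58} by H\"older-norm interpolation. The only cosmetic difference is that the paper parametrizes by an auxiliary exponent $q>1/2$ in $\vsob{-q}{0}$ (obtaining increment exponent $1-q/2$) and sends $q\to 1/2^+$, whereas you hit the endpoint $3/4$ directly via the explicit bounds $\|\calA^m\ell_\phi\|_{\vfock}^2 \lesssim \sqrt{m}$ and $\|1_{-\calL_0>m_0}\calA\ell_\phi\|_{\vsob{-1}{0}} \lesssim m_0^{-1/4}$.

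One small caution: your Fatou step to reach the $p_0$-th moment assumes the It\^o-trick constants stay bounded as $p\uparrow p_0$, which you justify via BDG/hypercontractivity; but for an \emph{abstract} energy solution Definition~\ref{def:itoTrick} only postulates some $C(p,T)$ for each $p<p_0$, with no regularity in $p$. The paper's proof has the same looseness (it writes $p=p_0$ in \eqref{eq:57} even though \eqref{eq:59} is only available for $p<p_0$). In practice this is harmless, since the sole application at the end of the proof of Theorem~\ref{thm:etoc} uses Cauchy--Schwarz and thus only second moments; one can simply run the whole argument with any fixed $p\in(4,p_0)$ in place of $p_0$.
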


\begin{proof}
First, we make use of Proposition \ref{prp:iib}, to see that for any $q > 1/2$, it holds that
\begin{equation}\label{eq:55}
\sup_m \| \mathcal{A}_+^m \ell_\phi \|_{\vsob{-q}{0}}, \| \mathcal{A}_+ \ell_\phi \|_{\vsob{-q}{0}} < \infty
\end{equation}
Indeed, this is just a computation (c.f. \eqref{eq:40}) in which we use the quantity $J^q(1)$, as defined in \eqref{eq:46}, to estimate an inner-integral:
\begin{align*}
\| \mathcal{A}_+^m \ell_\phi \|_{\vsob{-q}{0}}^2 \leqslant \beta^2  \int \frac{|p \phiHat(p)|^2}{(1 + \thalf |p+q|^2)^q} \mu_2(\dd p, \dd q)
\leqslant 2 J^q(1) \int \VHat(p) |p \phiHat(p)|^2 \dd p
\end{align*}
The bound also holds for $\mathcal{A}_+$ in place of $\mathcal{A}_+^m$, and the RHS is uniform in $m \in \bbN$, and finite because of  Proposition \ref{prp:iib} and the decay of $\phiHat(p)$.

To estimate H\"{o}lder norms of $A^m$ and $A$, we use two main estimates. First, consider $0 \leqslant s \leqslant t \leqslant T$, and apply the usual Jensen's inequality, then incompressibility, followed by hypercontractivity to obtain that for all $p \geqslant 1$
\begin{align}
\bbE [ | A^m_t - A^m_s |^p ] &= \bbE \big[ \big| \int_s^t \mathcal{A}^m\ell_\phi (\eta_r) \dd r \big|^p \big] \leqslant (t-s)^{p-1} \int_s^t \bbE \big[|\mathcal{A}^m\ell_\phi (\eta_r)|^p] \dd r \nonumber\\
&\leqslant (t-s)^p \| \mathcal{A}^m\ell_\phi \|_{L^{2p}(V)}^{p} \lesssim (t - s)^{p} \| \mathcal{A}^m\ell_\phi \|_{L^2(V)}^{p} \nonumber\\
&\leqslant (t - s)^{p} m^{pq/2} \| \mathcal{A}^m\ell_\phi \|_{\vsob{-q}{0}}^{p} \label{eq:60}
\end{align}
Provided that we consider $q > 1/2$, then \eqref{eq:55} implies that the RHS is finite.

Second, for $n \leqslant m$, applying the Itô trick, we obtain that for all $p \in [1, p_0)$, it holds
\begin{align}
\bbE [ | A^m_t - A^m_s - (A^n_t - A^n_s)|^p ] &= \bbE \Big[ \Big| \int_s^t (\mathcal{A}^m - \calA^n)\ell_\phi (\eta_r) \dd r \Big|^p \Big] \nonumber\\
                                  &\lesssim (t - s)^{p / 2} \| \ic_{- \mathcal{L}_0 > n} \mathcal{A}_+ \ell_\phi \|_{\vsob{-1}{0}}^p \nonumber\\
                                  &\leqslant (t - s)^{p / 2} n^{- p(1-q)/2} \| \mathcal{A}_+\ell_\phi \|_{\vsob{-q}{0}}^p \label{eq:59}
\end{align}
Again, the RHS is finite for $q > 1/2$. This estimate also holds for the case in which $A^m$ is replaced with $A$.

We use these two estimates to prove the claim that for each $p \in [1, p_0]$, there is a constant $C > 0$, such that uniformly in $m \in \bbN$ and $0 \leqslant s \leqslant t \leqslant T$, it holds
\begin{equation}\label{eq:57}
\bbE [ | A^m_t - A^m_s |^p ] \leqslant C (t-s)^{p(1-q/2)}
\end{equation}
Indeed, if $|t - s|/T \leqslant m^{-1}$, then the estimate follows from \eqref{eq:60}. Otherwise, choose $1 \leqslant n < m$ such that $(n+1)^{-1} < |t - s|/T \leqslant n^{-1}$, and consider
$$ \bbE [ | A^m_t - A^m_s |^p ] \lesssim_p \bbE [ | A^n_t - A^n_s |^p ] + \bbE [ | A^m_t - A^m_s - (A^n_t - A^n_s)|^p ] $$
The desired control follows by using \eqref{eq:60} on the first term, in the case $m = n$, and \eqref{eq:59} on the second term. The claim is proven.

Now we are ready to prove \eqref{eq:56}. Starting with \eqref{eq:57} in the case $p = p_0$, and applying Kolmogorov's criterion \cite[Theorem 23.7]{Kallenberg21_FoundationsModern}, noting that we can choose $q > 1/2$ arbitrarily close to $q = 1/2$, we conclude that
$$ \sup_m \bbE [\| A^m \|_{C^{\alpha}_T}^{p_0}] < \infty, \quad \text{ for all } \alpha \in (0, 3/4 - 1/p_0) $$
To obtain the result for $A$ in place of $A^m$, the argument is similar. Choose $n \in \bbN$ such that $(n+1)^{-1} < |t - s|/T \leqslant n^{-1}$, and we bound
$$ \bbE [ | A_t - A_s |^p ] \lesssim_p \bbE [ | A_t - A_s - (A^n_t - A^n_s)|^p ] + \bbE [ | A^n_t - A^n_s |^p ] $$
The former term we control with \eqref{eq:59}, which as we mentioned holds in the case $m = \infty$, and the latter we apply \eqref{eq:57} in the case $m = n$.

Finally we prove \eqref{eq:58}. It follows from \eqref{eq:59} in the case $m = \infty$ that the conclusion holds in the regime $\alpha \in (0, 1/2-1/p_0)$. To obtain the full regime $\alpha \in (0, 3/4 - 1/p_0)$, we use the interpolation estimate:
$$ \|A - A^m\|_{C^{\theta\gamma_1+(1-\theta)\gamma_2}_T}\lesssim \|A - A^m\|_{C^{\gamma_1}_T}^\theta \|A - A^m\|_{C^{\gamma_2}_T}^{1-\theta} $$
We have already proven that for any $\gamma_1 \in (0, 3/4 - 1/p_0)$, we have $\bbE[\|A - A^m\|_{C^{\gamma_1}_T}^{p_0}] < \infty$ uniformly among $m \in \bbN$. For each fixed $\alpha \in (0, 3/4 - 1/p_0)$ it suffices to make an appropriate choice $\gamma_1 \in (\alpha, 3/4 - 1/p_0)$, $\gamma_2 \in (0,1/2-1/p_0)$, and $\theta \in (0, 1)$ such that $\alpha = \theta\gamma_1+(1-\theta)\gamma_2$. The claim then follows by raising the interpolation estimate to the power of $p_0$, and applying H\"{o}lder's inequality with exponents $(p^{-1}, q^{-1}) = (\theta, 1-\theta)$.
\end{proof}

\section{Existence of energy solutions}
\label{sec:existenceenergy}
In this section we show that there exists a process $\eta$ that satisfies the notion of energy solution as given in Definition \ref{def:energysolution}, using established techniques from energy solution theory \cite{GubinelliJara13,GubinelPerkows20_InfinitesimalGenerator}. We construct the solution by considering a natural family of approximations $(\eta^\varepsilon : \varepsilon \in (0, 1))$. First we show that the approximations are tight, and then that any subsequential limit point satisfies Definition \ref{def:energysolution}.

\subsection{The natural approximation and tightness}
We restate the approximation detailed before the statement of Theorem \ref{thm:main}. Let $\rho \in \calS(\bbR)$ be a positive definite mollifier of unit mass, $\rhoHat \geqslant 0$, $\int \rho(x) \dd x = 1$, and take the rescaling $\rho_\varepsilon(x) = \varepsilon^{-1} \rho(x/\varepsilon)$, for $\varepsilon > 0$. For $V \in \calS'(\bbR)$ satisfying Assumption \ref{ass:posdef} define $V^\varepsilon = \rho_\varepsilon * \rho_\varepsilon * V$ to be the approximation obtained by applying the rescaled mollifier twice. Consider $\pi_0 \ll \pi$ be a measure on $(\calS', \calF)$ for which $\lVert \dd \pi_0 / \dd \pi \rVert_{\vfock} < \infty$ and write $\pi^\varepsilon_0$ (resp. $\pi^\varepsilon$) for the law of $\omega^\varepsilon := \rho_\varepsilon * \omega$ under which $\omega \sim \pi_0$ (resp. $\pi(V)$). In particular, it holds
\begin{equation}\label{eq:21}
\lVert \dd \pi_0^\varepsilon / \dd \pi^\varepsilon \rVert_{\efock} \leqslant \lVert \dd \pi_0 / \dd \pi \rVert_{\vfock}, \quad \forall \varepsilon > 0
\end{equation}

We shall write $\eta^\varepsilon, \calL^\varepsilon$ for the environment, generator etc.\@ from the smooth setting of Section \ref{sec:mollified-srbp} corresponding to this choice $V = V^\varepsilon$ and with initial state sampled according to $\eta^\varepsilon_0 \sim \pi^\varepsilon_0$. We shall refer to such objects as the {\it natural approximation}.

\begin{proposition}\label{prp:tight}
Let $V \in \calS'(\bbR)$ satisfy Assumption \ref{ass:posdef} and let $\pi_0 \ll \pi$ be a measure on $(\calS', \calF)$ for which $\lVert \dd \pi_0 / \dd \pi \rVert_{\vfock} < \infty$. Let $\rho \in \calS(\bbR)$ be a fixed mollifier, and for $\varepsilon \in (0,1)$, let $\eta^\varepsilon$ be the natural approximation described above. Then it holds that $(\eta^\varepsilon : \varepsilon \in (0, 1))$ is tight as a family of measures on $C([0,T], \calS'(\bbR))$.

Moreover, writing $\eta^0$ for any subsequential limit point of the family, and writing $V^0 = V$, we have the estimates:
\begin{itemize}
\item For all $p \geqslant 1, T \geqslant 0$ there exists a constant $C(p, T)$ such that for all such $V, \beta, \pi_0, \rho$, and $\varepsilon \in [0, 1)$, the process $(\eta^\varepsilon)_{t \geqslant 0}$ satisfies the Itô trick with respect to $V^\varepsilon$ and with constant $C = C(p, T) \lVert \dd \pi_0 / \dd \pi \rVert_{\vfock}$.
\item For all $p \geqslant 1, T \geqslant 0$, and for all such $V, \beta, \pi_0, \rho$ there exists a constant $C = C(p, T, V, \beta, \pi_0, \rho)$ such that uniformly among $\varepsilon \in [0, 1)$, it holds
\begin{equation}\label{eq:11}
\bbE \big[|\eta^\varepsilon_t(\phi)-\eta^\varepsilon_s(\phi)|^p\big] \leqslant C (t-s)^{p/2}, \quad \forall 0 \leqslant s \leqslant t \leqslant T
\end{equation}
\end{itemize}
\end{proposition}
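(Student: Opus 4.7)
The plan is to establish the Itô trick and the moment bound \eqref{eq:11} directly for $\eta^\varepsilon$ with $\varepsilon>0$, deduce tightness from Kolmogorov--Chentsov combined with Mitoma's theorem, and finally transfer both estimates to any subsequential limit by lower semi-continuity.

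The Itô trick for $\eta^\varepsilon$ is deduced from Proposition~\ref{prp:1} by the standard resolvent device. Given $f\in t\calC$, I would set $u:=(1-\calL_0)^{-1}f$ (well defined since $\calL_0\leqslant 0$ on $\efock$) and split $f = u - \calL_0 u$. The contribution $\int_0^t \calL_0 u(s,\eta^\varepsilon_s)\dd s$ is exactly what Proposition~\ref{prp:1} controls; combining \eqref{eq:10} with the contraction $\|(-\calL_0)^{1/2}(1-\calL_0)^{-1}f\|_{\efock}\leqslant\|f\|_{\esob{-1}{0}}$ and Nelson's hypercontractivity (which produces the factor $\sqrt{2p-1}^{\calN}$) bounds the drift contribution by $C\lVert \dd\pi_0/\dd\pi\rVert_{\vfock}\lVert \sqrt{2p-1}^{\calN} f\rVert^p_{L^2_T\esob{-1}{0}}$. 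The residual $\int_0^t u(s,\eta^\varepsilon_s)\dd s$ is treated by Jensen in time, the Radon--Nikodym bound \eqref{eq:21}, and hypercontractivity, yielding the same estimate. A standard truncation/density argument takes care of the fact that $(1-\calL_0)^{-1}f$ need not itself lie in $t\calC$.

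For \eqref{eq:11}, I use the Dynkin decomposition from Lemma~\ref{lem:2}:
$$\eta^\varepsilon_t(\phi)-\eta^\varepsilon_s(\phi)=\int_s^t \calL^\varepsilon\ell_\phi(\eta^\varepsilon_r)\dd r+M^\varepsilon_t(\ell_\phi)-M^\varepsilon_s(\ell_\phi),$$
with $\langle M^\varepsilon(\ell_\phi)\rangle_t=\int_0^t|\eta^\varepsilon_r(\nabla\phi)|^2\dd r$. The drift is $O((t-s)^{p/2})$ by the Itô trick just established combined with Proposition~\ref{prp:2}. The martingale piece is handled by Burkholder--Davis--Gundy plus Jensen, reducing to moments of $\eta^\varepsilon_r(\nabla\phi)$, which is Gaussian under $\pi^\varepsilon$ with variance $\int|\rhoHat(\varepsilon p)|^2 \VHat(p)|\widehat{\nabla\phi}(p)|^2\dd p$ uniformly bounded in $\varepsilon$ by $\lVert\rhoHat\rVert_\infty^2\int \VHat|\widehat{\nabla\phi}|^2$. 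The passage to the annealed law $\pi^\varepsilon_0$ is made via \eqref{eq:21} and hypercontractivity. Kolmogorov--Chentsov applied to each real-valued process $(\eta^\varepsilon_t(\phi))_t$ with $p>2$, together with uniform moment bounds on $\eta^\varepsilon_0(\phi)$, gives tightness in $C([0,T],\bbR)$ for every $\phi\in\calS$, and Mitoma's theorem for the nuclear space $\calS'(\bbR)$ lifts this to tightness in $C([0,T],\calS'(\bbR))$.

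For a subsequential limit $\eta^0$, the moment bound \eqref{eq:11} passes through via Skorokhod representation and Fatou, as the coordinate maps $\eta\mapsto \eta_t(\phi)-\eta_s(\phi)$ are continuous on path space. For the Itô trick with respect to $V$, the map $\eta\mapsto\int_0^t f(s,\eta_s)\dd s$ is also continuous for $f\in t\calC$, so the bound for $\eta^\varepsilon$ transfers to $\eta^0$ along the subsequence; the limiting constant involves the correct $V$-norm because $\lVert f\rVert_{\esob{-1}{0}}\to\lVert f\rVert_{\vsob{-1}{0}}$ by dominated convergence, using the pointwise convergence $\VHat^\varepsilon\to\VHat$ and the uniform domination $\VHat^\varepsilon\leqslant \lVert\rhoHat\rVert_\infty^2 \VHat$. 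The main technical point throughout is preserving uniformity of constants in $\varepsilon$, which ultimately rests on this monotone-type domination of $\VHat^\varepsilon$ and on \eqref{eq:21}.
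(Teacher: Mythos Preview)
Your proposal is correct and follows essentially the same approach as the paper's own proof: the resolvent substitution $u=(1-\calL_0)^{-1}f$ combined with Proposition~\ref{prp:1} and hypercontractivity for the It\^o trick, the Dynkin decomposition with Proposition~\ref{prp:2} and BDG for the increment bound, and Mitoma plus Kolmogorov--Chentsov for tightness. The only organisational difference is that the paper first reduces the increment bound to the stationary case via Cauchy--Schwarz and then uses time-stationarity to translate $[s,t]$ to $[0,t-s]$, whereas you treat the annealed case directly; and for transferring the It\^o trick to the limit, the paper passes to the limit in the intermediate bound \eqref{eq:20} and repeats the hypercontractivity step, whereas you pass to the limit in the final inequality using $\lVert f\rVert_{\esob{-1}{0}}\to\lVert f\rVert_{\vsob{-1}{0}}$ --- both are equivalent.
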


\begin{proof} The proof is structured as follows. First we prove the estimates in the case of $\varepsilon \in (0, 1)$. This allows us to deduce tightness for the sequence. At which point we finish by proving the estimates also in the case $\varepsilon = 0$.

We shall write $\bbP_0^\varepsilon$ for the law of the process $\eta^\varepsilon$ on the path space $C(\bbR_{\geqslant 0}, \bbR)$. Similarly, write $\bbP^\varepsilon$ for the time-stationary regime, in which $\pi_0^\varepsilon = \pi^\varepsilon$.

{\it Itô Trick for $\varepsilon \in (0, 1)$.} For $h \in t\calC$, define $u := (1 - \calL_0)^{-1} h$, with the operator acting on the field variable, $\omega \in \calW$, while keeping fixed $t \in [0, T]$.

Proposition \ref{prp:1}, implies that
\begin{equation}\label{eq:20}
\bbE^\varepsilon_0 \Big[ \sup_{t \in [0,T]} \Big\lvert \int_0^t \calL_0u(s, \eta^\varepsilon_s) \dd s \Big\rvert^p \Big] \leqslant C(p) \lVert \dd \pi_0 / \dd \pi \rVert_{\vfock} \lVert \nabla_0 u \rVert_{L^2_TL^{2p}(V^\varepsilon)}
\end{equation}
where we have also used the bound \eqref{eq:21}. Note that we have $\calL_0$ and $\nabla_0$ in the above, not $\calL_0^\varepsilon$ and $\nabla_0^\varepsilon$, because the action is independent of $\varepsilon \in (0, 1)$.
By hypercontractivity \cite[Theorem 5.10]{Janson97_GaussianHilbert}, it holds
\begin{equation}\label{eq:71}
\lVert \nabla_0 u \rVert_{L^2_TL^{2p}(V^\varepsilon)} \leqslant \lVert \sqrt{2p-1}^\calN \nabla_0 u \rVert_{L^2_TL^2(V^\varepsilon)} \leqslant \lVert \sqrt{2p-1}^\calN h \rVert_{L^2_T\esob{-1}{0}}
\end{equation}

Additionally, we have $ \sup_{t \in [0,T]} \Big\lvert \int_0^t u(s, \eta^\varepsilon_s) \dd s \Big\rvert \leqslant \int_0^T |u(s, \eta^\varepsilon_s)| \dd s $ and therefore by Cauchy-Schwarz, Minkowski's inequality, Jensen's inequality, and hypercontractivity:
\begin{align}
\bbE^\varepsilon_0 \Big[ \sup_{t \in [0,T]} \Big\lvert \int_0^t u(s, \eta^\varepsilon_s) \dd s \Big\rvert^p \Big] &\leqslant \lVert \dd \pi_0 / \dd \pi \rVert_{\vfock}  \big(\bbE^\varepsilon \Big[ \big( \int_0^T |u(s, \eta^\varepsilon_s)| \dd s \big)^{2p} \Big] \big)^\half \nonumber\\
&\leqslant \lVert \dd \pi_0 / \dd \pi \rVert_{\vfock} \big( \int_0^T \lVert u(s, \cdot) \rVert_{L^{2p}(V^\varepsilon)} \dd s \big)^p \nonumber\\
&\leqslant C(T) \lVert \dd \pi_0 / \dd \pi \rVert_{\vfock} \big( \int_0^T \lVert u(s, \cdot) \rVert_{L^{2p}(V^\varepsilon)}^2 \dd s \big)^{p/2} \nonumber\\
&\leqslant C(T) \lVert \dd \pi_0 / \dd \pi \rVert_{\vfock} \lVert \sqrt{2p-1}^\calN u(s, \cdot) \rVert_{L^2_T\efock}^p \nonumber\\
&\leqslant C(T) \lVert \dd \pi_0 / \dd \pi \rVert_{\vfock} \lVert \sqrt{2p-1}^\calN h(s, \cdot) \rVert_{L^2_T\esob{-1}{0}}^p \label{eq:19}
\end{align}

Combining \eqref{eq:20}, \eqref{eq:71}, and \eqref{eq:19} along with the identity $(a+b)^p \leqslant 2^p(a^p + b^p)$ yields the Itô trick with constant $C = 2^p(C(p) + C(T)) \lVert \dd \pi_0 / \dd \pi \rVert_{\vfock}$.

{\it Control of increments for $\varepsilon \in (0, 1)$.} Fix $\phi \in \calS(\bbR)$. We begin by reducing to the stationary case:
\begin{align}
\bbE^\varepsilon_0 \big[|\eta^\varepsilon_t(\phi)-\eta^\varepsilon_s(\phi)|^p\big] \leqslant \lVert \dd \pi^\varepsilon_0 / \dd \pi^\varepsilon \rVert_{\efock} \big( \bbE^\varepsilon \big[|\eta^\varepsilon_t(\phi)-\eta^\varepsilon_s(\phi)|^{2p} \big]\big)^{1/2} \nonumber\\
\leqslant \lVert \dd \pi_0 / \dd \pi \rVert_{\vfock} \big( \bbE^\varepsilon \big[|\eta^\varepsilon_{t - s}(\phi)-\eta^\varepsilon_0(\phi)|^{2p} \big]\big)^{1/2} \label{eq:70}
\end{align}
where in the last line we used \eqref{eq:21} and time-stationarity.

Let $(M^\varepsilon_t(\ell_\phi))_{t \geqslant 0}$ be the Dynkin martingale of Lemma \ref{lem:2}, in which $\ell_\phi$ is the linear functional $\ell_\phi(\eta) := \eta(\phi)$, and take the upper bound
\begin{equation}\label{eq:12}
\bbE^\varepsilon \big[|\eta^\varepsilon_t(\phi)-\eta^\varepsilon_0(\phi)|^{2p}\big] \lesssim_p \bbE^\varepsilon \Big[ \Big|\int_0^t \calL^\varepsilon\ell_\phi(\eta^\varepsilon_s) \dd s \Big|^{2p} \Big] + \bbE^\varepsilon \big[|M^\varepsilon_t(\ell_\phi)|^{2p}\big]
\end{equation}

For the first term of \eqref{eq:12}, we apply the Itô trick with $h(s, \omega) = \ic\{s \in [0, t]\} \calL^\varepsilon\ell_\phi(\omega)$. Although originally proved for cylinder functions $h$, the Itô trick extends to the present case via a density argument. This yields
$$ \bbE^\varepsilon \Big[ \Big|\int_0^t \calL^\varepsilon\ell_\phi(\eta^\varepsilon_s) \dd s \Big|^{2p} \Big] \lesssim_{p, T} t^p \lVert \calL^\varepsilon\ell_\phi \rVert_{\esob{-1}{0}}^{2p} $$
where we may omit the factor of $\sqrt{4p-1}^\calN$ because $\calL^\varepsilon\ell_\phi$ has no elements in chaos higher than $n = 2$. Applying Proposition \ref{prp:2}, we have
\begin{align*}
\lVert \calL^\varepsilon\ell_\phi \rVert_{\esob{-1}{0}}^2 \lesssim J^1_\varepsilon(1) \int \VHat^\varepsilon(p) |p|^2 | \phiHat(p)|^2 \dd p
\end{align*}
In which we write $J^1_\varepsilon(1)$ for the quantity defined in \eqref{eq:46} but in the case $V = V^\varepsilon$. Because $\VHat^\varepsilon(p) = \rhoHat(\varepsilon p)^2 \VHat(p)$, it holds that there is a constant $C(V, \beta, \rho, \phi) > 0$ such that uniformly in $\varepsilon \in (0, 1)$, it holds
\begin{equation}\label{eq:69}
J^1_\varepsilon(1),  \int \VHat^\varepsilon(p) |p|^2 | \phiHat(p)|^2 \dd p \lesssim C(V, \beta, \rho, \phi)
\end{equation}

For the second term of \eqref{eq:12} we use Burkholder-Davis-Gundy to obtain
\begin{align*}
\bbE^\varepsilon \big[ |M^\varepsilon_t(\ell_\phi)|^{2p} \big] \lesssim_p \bbE^\varepsilon\big[ \langle M^\varepsilon(\ell_\phi) \rangle^p_t \big] \leqslant t^p \lVert \nabla_0 \ell_\phi \rVert_{L^{2p}(V^\varepsilon)}^{2p}
\end{align*}
where in the last line we used the explicit form of the quadratic variation given in Lemma \ref{lem:2}, and Jensen's inequality. The uniform control in $\varepsilon \in (0, 1)$ follows from hypercontractivity:
$$\lVert \nabla_0\ell_\phi \rVert_{L^{2p}(V^\varepsilon)} \lesssim_p \lVert \nabla_0\ell_\phi \rVert_{\efock} \lesssim \lVert \ell_\phi \rVert_{\esob{1}{0}} = \int \VHat^\varepsilon(p) |p|^2 | \phiHat(p)|^2 \dd p $$
In particular, this can be controlled uniformly in $\varepsilon \in (0, 1)$ thanks to \eqref{eq:69}. Plugging all of these estimates back into \eqref{eq:70} in the case $t = t - s$ yields the estimate \eqref{eq:11}.

{\it Tightness.} Mitoma's criterion \cite{Mitoma83_TightnessProbabilities} implies that it is sufficient to fix $\phi \in \calS(\bbR)$ and show tightness of the family $\{\eta^\varepsilon(\phi) : \varepsilon \in (0,1)\}$ as a sequence of processes on $C([0,T], \bbR)$. Kolmogorov's criterion \cite[Theorem 23.7]{Kallenberg21_FoundationsModern} implies that it is enough to show that the initial state $(\eta^\varepsilon_0(\phi) : \varepsilon \in (0, 1))$ is tight as a family of measures on $\bbR$, and that for some fixed $p > 2$ we may control the increments as in \eqref{eq:11}. In which case it remains to prove tightness of the initial state. This is trivial, because $\eta_0^\varepsilon(\phi) \convd \eta(\phi)$ when $\eta \sim \pi_0$, so it remains to control the increments.

{\it Estimates in the case $\varepsilon = 0$} It follows from Kolmogorov's criterion \cite[Theorem 23.7]{Kallenberg21_FoundationsModern} that the control on the increments, as stated in \eqref{eq:11}, also holds with the same constants for any subsequential limit $\eta^0$.
As for the Itô trick, taking limits in \eqref{eq:20} along a subsequence, yields
$$ \bbE \Big[ \sup_{t \in [0,T]} \Big\lvert \int_0^t \calL_0 u(s, \eta_s) \dd s \Big\rvert^p \Big] \leqslant C(p) \lVert \dd \pi_0 / \dd \pi \rVert_{\vfock} \lVert \nabla_0 u \rVert^{p}_{L^2_TL^{2p}(V)} $$
Note that we have also used the weak convergence $\pi(V^\varepsilon) \rightarrow \pi(V)$. The Itô trick follows by repeating the same argument given above, in which the same constant is obtained.

\end{proof}

\subsection{Existence via the natural approximation}

\begin{theorem}\label{thm:exist}
Consider the hypothesis of Proposition \ref{prp:tight}. Any subsequential limit point $\eta$ of the tight family $(\eta^\varepsilon : \varepsilon \in (0, 1))$ is an energy solution in the sense of Definition \ref{def:energysolution}.
\end{theorem}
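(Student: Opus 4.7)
The plan is to verify the four conditions of Definition \ref{def:energysolution} for any subsequential limit $\eta$ of the mollified family. By Skorokhod's representation theorem, I may assume (after relabelling the subsequence and passing to a new probability space) that $\eta^\varepsilon \to \eta$ almost surely in $C([0,T], \calS'(\bbR))$. With this reduction in place, Items 1 and 2 will be immediate consequences of Proposition \ref{prp:tight}: the Itô trick at $\varepsilon=0$ is exactly the content of the second bullet there (valid for every $p \geqslant 1$, so any $p_0$ works), and incompressibility is inherited from the uniform estimate $\bbE[|f(\eta^\varepsilon_t)|] \lesssim \lVert \dd \pi_0 / \dd \pi \rVert_{\vfock} \lVert f \rVert_{\vfock}$ (obtained by the same Cauchy--Schwarz reduction to the stationary case employed in the proof of Proposition \ref{prp:tight}) together with the fact that every $f \in \calC$ is bounded continuous on $\calS'(\bbR)$.

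The substantive work lies in Items 3 and 4. Lemma \ref{lem:2} supplies, for each $\varepsilon \in (0,1)$, the Dynkin martingale
\begin{equation*}
M^\varepsilon_t(\ell_\phi) = \eta^\varepsilon_t(\phi) - \eta^\varepsilon_0(\phi) - \int_0^t \calL_0 \ell_\phi(\eta^\varepsilon_s)\, \dd s - \int_0^t \calA^\varepsilon \ell_\phi(\eta^\varepsilon_s)\, \dd s
\end{equation*}
with quadratic variation $\int_0^t |\eta^\varepsilon_s(\nabla\phi)|^2\, \dd s$. Since $\calL_0 \ell_\phi = \thalf \ell_{\Delta \phi}$ is again a linear functional, the first three terms are continuous in $\eta^\varepsilon \in C([0,T], \calS'(\bbR))$ and converge almost surely to the corresponding functionals of $\eta$, with uniform integrability supplied by the moment bound \eqref{eq:11} for some $p > 1$. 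The decisive obstacle is passing to the limit in the singular term $\int_0^t \calA^\varepsilon \ell_\phi(\eta^\varepsilon_s)\, \dd s$: the pointwise evaluation $\calA \ell_\phi(\eta_s)$ is ill-defined along a typical trajectory of $\eta$ and must be interpreted via the continuous extension $I_T$ of Definition \ref{def:i}.

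My strategy for the singular term is to combine the uniform Itô trick with the spectral cutoff $\calA^{m,\varepsilon} := \ic_{-\calL_0 \leqslant m} \calA^\varepsilon$ from \eqref{eq:54}, decomposing
\begin{equation*}
\int_0^t \calA^\varepsilon \ell_\phi(\eta^\varepsilon_s)\, \dd s = \int_0^t \calA^{m,\varepsilon}\ell_\phi(\eta^\varepsilon_s)\, \dd s + \int_0^t (\calA^\varepsilon - \calA^{m,\varepsilon})\ell_\phi(\eta^\varepsilon_s)\, \dd s.
\end{equation*}
For fixed $m$, $\calA^{m,\varepsilon}\ell_\phi$ is a polynomial of degree at most two in linear functionals of the environment, with a Fourier-localised kernel that converges pointwise as $\varepsilon \to 0$ (using $\VHat^\varepsilon = \rhoHat(\varepsilon\cdummy)^2 \VHat$); combined with the almost sure convergence of $\eta^\varepsilon$ this yields convergence of the first term to $\int_0^t \calA^m \ell_\phi(\eta_s)\, \dd s$. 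For the error term, a computation parallel to the proof of Lemma \ref{lem:1}, with the inner integral bound of Proposition \ref{prp:iib} applied at $\lambda = m$ and the uniform dominance $\VHat^\varepsilon \lesssim \VHat$ (consequence of $\rhoHat$ being bounded), gives $\sup_{\varepsilon \in [0,1)} \lVert (\calA^\varepsilon - \calA^{m,\varepsilon})\ell_\phi \rVert_{\esob{-1}{0}} \to 0$ as $m \to \infty$. Feeding this into the uniform Itô trick (Proposition \ref{prp:tight}) with some $p > 1$ gives
\begin{equation*}
\sup_{\varepsilon \in [0,1)} \bbE\Big[\sup_{t \in [0,T]}\Big|\int_0^t (\calA^\varepsilon - \calA^{m,\varepsilon})\ell_\phi(\eta^\varepsilon_s)\, \dd s\Big|^p\Big] \xrightarrow{m \to \infty} 0,
\end{equation*}
and the analogous estimate at $\varepsilon = 0$ (using Item 1 already established for $\eta$) identifies the limit as $\int_0^t \calA \ell_\phi(\eta_s)\, \dd s$ in the sense of Definition \ref{def:i}. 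The martingale property of Item 3 then passes to the limit by testing against bounded continuous functionals of $\eta|_{[0,s]}$ and invoking the resulting $L^p$ uniform integrability.

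For Item 4, I will apply the same scheme to the auxiliary martingale $(M^\varepsilon_t(\ell_\phi))^2 - \int_0^t |\eta^\varepsilon_s(\nabla\phi)|^2\, \dd s$ from Lemma \ref{lem:2}: the quadratic-variation integrand converges pointwise and uniformly in $s$ by continuity of $\eta \mapsto \eta_s(\nabla\phi)$, and \eqref{eq:11} again supplies $L^p$ uniform integrability. The entire difficulty is concentrated in the convergence of the singular term above; once that is handled through the cutoff/Itô trick interplay, the remaining pieces of both items are standard.
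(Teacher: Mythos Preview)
Your overall architecture---Skorokhod coupling, Items 1--2 inherited from Proposition \ref{prp:tight}, and a two-scale argument to handle the singular drift---is sound and parallels the paper's route (the paper works directly with weak convergence and a second mollification scale $\kappa$ in place of your Skorokhod coupling and spectral cutoff $m$, cf.\ the decomposition \eqref{eq:25}--\eqref{eq:27}). The genuine gap is in your treatment of the cutoff term $\int_0^t \calA^{m,\varepsilon}\ell_\phi(\eta^\varepsilon_s)\,\dd s$.

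First a minor point: the Fourier kernel of $\calA_+\ell_\phi$ is $\tfrac{\iota\beta}{2}\big(p_1\phiHat(p_1)+p_2\phiHat(p_2)\big)$, which is entirely independent of $V$; hence $\calA^{m,\varepsilon}\ell_\phi=\calA^{m}\ell_\phi$ as chaos-2 kernels and there is nothing to ``converge pointwise as $\varepsilon\to 0$''. The real issue is that the sharp spectral projector $\ic_{-\calL_0\leqslant m}$ does \emph{not} return a cylinder function: the resulting kernel $\hat k(p_1,p_2)$ carries a jump along $\{|p_1+p_2|^2=2m\}$, so $k\notin\calS(\bbR^2)$ and $\omega\mapsto\int k(x,y)\,\omega(x)\omega(y)\,\dd x\,\dd y$ is not a continuous functional on $\calS'(\bbR)$. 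Consequently the almost-sure convergence $\eta^\varepsilon\to\eta$ in $C([0,T],\calS')$ does not by itself deliver convergence of $\int_0^t\calA^m\ell_\phi(\eta^\varepsilon_s)\,\dd s$. The fix is to choose a regularisation that \emph{does} produce a continuous functional: either smooth out the spectral cutoff (then $\hat k\in\calS(\bbR^2)$, as one checks from the rapid decay of $p\phiHat(p)$ on the strip $|p_1+p_2|\lesssim\sqrt m$), or---more in keeping with the paper---approximate the point evaluation directly via $g^m(\omega)=\omega(\nabla\phi)\,\omega(\rho_{1/m})-V(\nabla\phi)\in\calC_p$. The latter is manifestly continuous on $\calS'$, and $\lVert\calA\ell_\phi-g^m\rVert_{\esob{-1}{0}}\to 0$ uniformly in $\varepsilon\in[0,1)$ by the same inner-integral estimate you already invoke (the computation appears verbatim after \eqref{eq:72} in the proof of Theorem \ref{thm:Xconvergence}). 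With either substitution your scheme goes through unchanged.
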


\begin{proof}
Let $\eta$ be a subsequential limit. That is, writing $\bbP_0$ for the law of $\eta$, and $\bbP^\varepsilon_0$ for the law of $\eta^\varepsilon$, it holds that $\bbP^\varepsilon_0 \rightarrow \bbP_0$ along a subsequence. We have already proven in Proposition \eqref{prp:tight} that the Itô trick holds, and therefore it remains to verify properties 2--4 of Definition \eqref{def:energysolution}.

{\it Incompressibility.} For any $F \in \calC$, it holds
\begin{align}
\bbE[|F(\eta_t)|] &= \lim_{\varepsilon\rightarrow0} \bbE^\varepsilon[|F(\eta^\varepsilon_t)|] \le \lim_{\varepsilon\rightarrow0} \lVert \dd \pi_0^\varepsilon / \dd \pi^\varepsilon \rVert_{\vfock} \big( \bbE^\varepsilon[|F(\eta^\varepsilon_t)|^2] \big)^{1/2}
\\
&\leqslant \lVert \dd \pi_0 / \dd \pi \rVert_{\vfock}  \lim_{\varepsilon\rightarrow0} \lVert F \rVert_{\efock} = \lVert \dd \pi_0 / \dd \pi \rVert_{\vfock} \lVert F \rVert_{\vfock}
\end{align}
where the last line follows from stationarity of $(\eta^\varepsilon_t)_{t \geqslant 0}$ under the initial condition $\eta^\varepsilon_0 \sim \pi^\varepsilon$.

{\it The martingale property.} For $\phi \in \calS$, $0 \leqslant s \leqslant t \leqslant T$ and for a bounded and continuous function $G : C([0, s], \calS') \rightarrow \bbR$, it holds by Lemma \ref{lem:2}
$$\bbE^\varepsilon_0 \Big[ \big( \eta_t^\varepsilon(\phi) - \eta_s^\varepsilon(\phi) - \int_s^t \calL^\varepsilon\ell_\phi (\eta_r^\varepsilon) \dd r \big) G( (\eta_r^\varepsilon)_{r \in [0, s]} )  \Big] = 0 $$
The martingale property for the limit process follows from the claim
\begin{align}
\lim_{\varepsilon\rightarrow0} \bbE^\varepsilon_0 \Big[ \eta_t^\varepsilon(\phi) G( (\eta_r^\varepsilon)_{r \in [0, s]} ) \Big]  &= \bbE \Big[ \eta_t(\phi) G( (\eta_r)_{r \in [0, s]} ) \Big], \quad \forall t \in [s, T] \label{eq:23}\\
\lim_{\varepsilon\rightarrow0} \bbE^\varepsilon_0 \Big[ \int_s^t \calL^\varepsilon\ell_\phi (\eta_r^\varepsilon) \dd r G( (\eta_r^\varepsilon)_{r \in [0, s]} ) \Big]  &= \bbE \Big[ \int_s^t \calL\ell_\phi (\eta_r) \dd r G( (\eta_r)_{r \in [0, s]} ) \Big] \label{eq:24}
\end{align}
where the RHS of \eqref{eq:24} is understood in the sense of Definition \ref{def:itoTrick}. We give only a proof of \eqref{eq:24} as it is the more challenging of the two.

The difficulty of \eqref{eq:24} is that the parameter $\varepsilon\in(0,1)$ appears ``diagonally'': attached both to the generator $\calL^\varepsilon$ and the process $\eta^\varepsilon$. Accordingly, we introduce another parameter $\kappa \geqslant \varepsilon$ and expand the difference of the LHS and RHS of \eqref{eq:24} according to
\begin{align}
&\bbE^\varepsilon_0 \Big[ \int_s^t \calL^\varepsilon \call_\phi (\eta_r^\varepsilon) \dd r G( (\eta_r^\varepsilon)_{r \in [0, s]} ) \Big] - \bbE^\varepsilon_0 \Big[ \int_s^t \calL^\kappa \call_\phi (\eta_r^\varepsilon) \dd r G( (\eta_r^\varepsilon)_{r \in [0, s]} ) \Big] \label{eq:25}\\
&+ \bbE^\varepsilon_0 \Big[ \int_s^t \calL^\kappa \call_\phi (\eta_r^\varepsilon) \dd r G( (\eta_r^\varepsilon)_{r \in [0, s]} ) \Big] - \bbE \Big[ \int_s^t \calL^\kappa \call_\phi (\eta_r) \dd r G( (\eta_r)_{r \in [0, s]} ) \Big] \label{eq:26}\\
&+ \bbE \Big[ \int_s^t \calL^\kappa \call_\phi (\eta_r) \dd r G( (\eta_r)_{r \in [0, s]} ) \Big] - \bbE \Big[ \int_s^t \calL \call_\phi (\eta_r) \dd r G( (\eta_r)_{r \in [0, s]} ) \Big] \label{eq:27}
\end{align}
To obtain \eqref{eq:24}, we show that these terms converge to zero in the regime where we first send $\varepsilon \rightarrow 0$, followed by $\kappa \rightarrow 0$. More precisely, we claim that \eqref{eq:26} converges to zero as $\varepsilon \rightarrow 0$, \eqref{eq:27} is independent of $\varepsilon$ and converges to zero as $\kappa \rightarrow 0$, and we have a uniform control of \eqref{eq:25} among $\varepsilon \in (0, \kappa)$ in terms of a vanishing function of $\kappa$.

To obtain the claim on \eqref{eq:25}, we begin with Cauchy-Schwarz and and application of the Itô trick for $\eta^\varepsilon$ with respect to $V^\varepsilon$ in the case $p = 2$:
\begin{equation}\label{eq:28}
\big| \bbE^\varepsilon_0 \Big[ \int_s^t (\calL^\varepsilon - \calL^\kappa)\ell_\phi (\eta_r^\varepsilon) \dd r G( (\eta_r^\varepsilon)_{r \in [0, s]} ) \Big] \big| \lesssim_{G, T, \pi_0} \lVert (\calL^\varepsilon - \calL^\kappa)\ell_\phi \rVert_{\esob{-1}{0}}
\end{equation}
It is crucial here that the Itô trick holds with a constant $C$ that is $\varepsilon$-independent. The RHS of \eqref{eq:28} may be computed explicitly:
\begin{equation}\label{eq:29}
(\calL^\kappa - \calL^\varepsilon)\ell_\phi = V^\varepsilon(\nabla\phi) - V^\kappa(\nabla\phi) = \int_{\bbR} \VHat(p) \widehat{\nabla\phi}(p) (\rhoHat(\varepsilon p)^2 - \rhoHat(\kappa p)^2) \dd p
\end{equation}
and the claim follows by performing a Taylor expansion of $\rhoHat$.

The claim on \eqref{eq:26} follows from the weak convergence $\eta^\varepsilon \rightarrow \eta$. The functional under consideration is not bounded, however, so to apply weak convergence we need an additional ingredient: a $p$-moment bound for some $p > 1$. This is also obtained by applying the Itô trick for $\eta^\varepsilon$ with respect to $V^\varepsilon$, along the lines of \eqref{eq:28} but for a higher moment, and with $\calL^\kappa$ in place of $(\calL^\kappa - \calL^\varepsilon)$. It remains to control $\lVert \calL^\kappa \ell_\phi \rVert_{\esob{-1}{0}}$ uniformly among $\varepsilon \in (0, 1)$. Lemma \ref{lem:1} gives us a uniform control of $\lVert \calL^\varepsilon \ell_\phi \rVert_{\esob{-1}{0}}$ (see \eqref{eq:22}) and the residual $\lVert (\calL^\varepsilon - \calL^\kappa) \ell_\phi \rVert_{\esob{-1}{0}}$ can be easily controlled, e.g.\@ through \eqref{eq:29}.

The claim on \eqref{eq:27} is similarly a consequence of the Itô trick for $\eta$ with respect to $V$, and the observation $\calL^\kappa \ell_\phi - \calL \ell_\phi = V(\nabla\phi) - V^\kappa(\nabla\phi)$. This concludes the proof of the martingale problem.

{\it The quadratic variation.} We repeat the strategy that we just used for the martingale property. We start with knowledge of the martingale property at scale $\varepsilon > 0$:
$$\bbE^\varepsilon_0 \Big[ \big( M^\varepsilon_t(\phi)^2 - \int_s^t|\eta^\varepsilon_r(\nabla \phi)|^2 \dd r \big) G( (\eta_r^\varepsilon)_{r \in [0, s]} )  \Big] = 0 $$
and then we take a limit as $\varepsilon \rightarrow 0$.

\end{proof}

\section{Uniqueness of energy solutions}\label{sec:uniq-energy-solut}
In this section we show that there is only one distribution on the path space satisfying the definition of energy solution.

\subsection{Solving Kolmogorov's Backward Equation}
The key property of $\calL$ required in this section is the bound
\begin{equation}\label{eq:34}
\calL \in L(\vsob{1}{2}, \vsob{-1}{0})
\end{equation}
In fact, Lemma \ref{lem:1} implies the stronger regularity $\calL \in L(\vsob{1}{1}, \vsob{-1}{0})$. However, in this section we assume only \eqref{eq:34} because it is all that is needed.

\begin{remark}\label{rmk:3}
The property of \eqref{eq:34} is just weaker than the Graded Sector Condition of e.g.\ \cite[Section 2.7.4]{KomoLandOlla12_FluctuationsMarkov}, which, in our language, asks for $\calL \in L(\calH^1_{2\gamma}(V), \calH^{- 1}_0(V))$ for some $\gamma \in (0, 1)$.
\end{remark}

We define $\calX(V) \assign L^2_T \vsob{1}{2} \cap H^1_T\vsob{-1}{0}$. We briefly explain why this is a natural choice. For $F \in \calX(V)$, it follows from \eqref{eq:34} that
\begin{equation}\label{eq:35}
\partial_t F, \calL F \in L^2_T\vsob{-1}{0}
\end{equation}
With this level of regularity, we can use Definition \ref{def:i} to give meaning to the additive functional $\int (\partial_t + \calL) F \dd t$. Such a process plays a crucial role in the proof of uniqueness (see Proposition \ref{martingalepropextension}).

The main theorem of this section is to show that we are able to construct solution of Kolmogorov's Backward Equation that live in the space $\calX(V)$.
\begin{theorem} \label{existenceKBE}
Let $V \in \calS'(\bbR)$ satisfy Assumption \ref{ass:posdef}, $\beta^2 > 0$, and suppose that $J^1(1) \leqslant 1/400$, where $J^s(\lambda)$ is as given in \eqref{eq:46}. Then, for all time horizons $T \geqslant 0$ and initial conditions $F_0 \in \calC$, there exists a weak solution $F \in \calX(V)$ to
\begin{equation}\label{eq:18}
\partial_t F = \calL F, \quad F (0) = F_0
\end{equation}
meaning that for all $G \in t\calC$ it holds $F(0) = F_0$ and
\begin{equation}\label{KBEweakformulation}
\int_0^T \langle  G(t), \partial_t F(t) \rangle_{\vsob{1}{0}, \vsob{-1}{0}} \dd t = \int_0^T \langle G(t), \calL F(t) \rangle_{\vsob{1}{0}, \vsob{-1}{0}} \dd t
\end{equation}
\end{theorem}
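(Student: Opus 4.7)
The plan is to construct $F$ as a weak subsequential limit of a finite-chaos Galerkin approximation, the core ingredient being a weighted energy estimate in $\vsob{0}{2}$ which closes thanks to the smallness hypothesis $J^1(1)\leqslant 1/400$. Let $P_m$ denote the orthogonal projection in $\vfock$ onto $\bigoplus_{n\leqslant m}\vfocki{n}$, and set $\calA^m\assign P_m\calA P_m$, $\calL^m\assign \calL_0+\calA^m$. On $P_m\vfock$, the operator $\calL_0$ is self-adjoint nonpositive and $\calA^m$ is bounded and skew-symmetric, so $\calL^m$ is maximally dissipative and by Lumer--Phillips generates a $C_0$-contraction semigroup. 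For $F_0\in\calC$ (which has finite chaos decomposition), this yields a classical solution $F^m$ of $\partial_t F^m=\calL^m F^m$, $F^m(0)=P_m F_0$, smooth enough to justify the subsequent computations.

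The key a priori estimate is obtained by differentiating $\lVert F^m\rVert_{\vsob{0}{2}}^2$. Since $[\calL_0,\calN]=0$, we get $\langle F^m,(1+\calN)^2\calL_0 F^m\rangle=-\lVert F^m\rVert_{\vsob{1}{2}}^2+\lVert F^m\rVert_{\vsob{0}{2}}^2$. For the antisymmetric part, the identities $\calA_-^*=-\calA_+$ and $[\calN,\calA_\pm]=\pm\calA_\pm$ yield, after telescoping in the chaos decomposition $F^m=\sum_n F^m_n$,
\begin{equation*}
\langle F^m,(1+\calN)^2\calA^m F^m\rangle=\sum_{n\geqslant 0}\bigl((n+2)^2-(n+1)^2\bigr)\langle F^m_{n+1},\calA^m_+ F^m_n\rangle.
\end{equation*}
The graded sector condition (Lemma~\ref{lem:1}) at $\alpha=1$ gives $\lVert \calA^m_+ F^m_n\rVert_{\vsob{-1}{0}}\leqslant \sqrt{3J^1(1)}(n+1)^{1/2}\lVert F^m_n\rVert_{\vsob{1}{0}}$, and combining $(2n+3)(n+1)^{1/2}\leqslant 3(n+1)^{3/2}$ with a Cauchy--Schwarz in the chaos index (together with the embedding $\vsob{1}{2}\hookrightarrow\vsob{1}{3/2}$) yields
\begin{equation*}
\bigl|\langle F^m,(1+\calN)^2\calA^m F^m\rangle\bigr|\leqslant 3\sqrt{3J^1(1)}\,\lVert F^m\rVert_{\vsob{1}{2}}^2.
\end{equation*}
The hypothesis $J^1(1)\leqslant 1/400$ ensures $1-3\sqrt{3J^1(1)}\geqslant c>0$, and Gronwall's inequality yields $\sup_m\bigl(\lVert F^m\rVert_{L^\infty_T\vsob{0}{2}}+\lVert F^m\rVert_{L^2_T\vsob{1}{2}}\bigr)\lesssim_T \lVert F_0\rVert_{\vsob{0}{2}}$. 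Combined with $\calL\in L(\vsob{1}{2},\vsob{-1}{0})$, this gives a uniform $L^2_T\vsob{-1}{0}$ bound on $\partial_t F^m$, so $(F^m)$ is bounded in $\calX(V)$.

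Weak compactness extracts a subsequence $F^{m_k}\rightharpoonup F$ in $\calX(V)$. To verify the weak formulation, test against $G\in t\calC$: since $G$ has only finitely many nonzero chaos components, $(\calL^{m_k})^*G=\calL^*G$ for $k$ large, so $\int_0^T \langle G,\partial_t F^{m_k}\rangle \dd t=\int_0^T\langle G,\calL^{m_k}F^{m_k}\rangle \dd t=\int_0^T\langle \calL^* G,F^{m_k}\rangle \dd t$ once $k$ is large enough. Passing to the limit using the weak convergence of $\partial_t F^{m_k}$ in $L^2_T\vsob{-1}{0}$ on the left and of $F^{m_k}$ in $L^2_T\vsob{1}{0}$ on the right produces \eqref{KBEweakformulation}. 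The initial condition $F(0)=F_0$ is inherited by integration by parts in time against a test function of the form $\phi(t)g$, using $P_{m_k}F_0\to F_0$ strongly in $\vfock$, together with the continuity $F\in C([0,T],\vsob{-1}{0})$ provided by $H^1_T\vsob{-1}{0}$.

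The main obstacle is the energy estimate itself: $\calA$ is skew-symmetric in $\vfock$, but the weight $(1+\calN)^2$ destroys this and produces a commutator term of the \emph{same order} as the dissipation coming from $\calL_0$. Only the quantitative smallness afforded by the graded sector condition and the hypothesis $J^1(1)\leqslant 1/400$ allows this term to be absorbed into the dissipation. Without such a smallness assumption, closing the estimate in $\calX(V)$ would require a genuinely different approach (such as resolvent methods in a carefully tuned Hilbert space) to exploit a hidden cancellation in the structure of $\calA_+$ and $\calA_-$.
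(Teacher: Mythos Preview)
Your overall strategy matches the paper's: truncate $\calA$, solve the approximate equation, derive a uniform $\calX(V)$ bound via a weighted energy estimate (where the commutator with $(1+\calN)^2$ is absorbed using the Graded Sector Condition and the smallness $J^1(1)\leqslant 1/400$), extract a weak limit, and pass to the limit in the weak formulation. The energy estimate and the passage to the limit are essentially correct and parallel the paper's argument.

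There is, however, a genuine gap in the construction of $F^m$. You claim that $\calA^m=P_m\calA P_m$ is bounded on $P_m\vfock$, but this is false. Take $V=\delta$ (so $\VHat\equiv 1$) and $\psi\in\vfocki{1}\cap\calS$: then $\calA_+\psi(p_1,p_2)=\tfrac{\iota\beta}{2}(p_1\psi(p_1)+p_2\psi(p_2))$, and the diagonal part of $\lVert\calA_+\psi\rVert_{\vfocki{2}}^2$ contains $\int|p_1\psi(p_1)|^2\,\dd p_1\,\dd p_2=\infty$. So $\calA_+\psi\notin\vfocki{2}$, and $P_m\calA P_m$ is not even well-defined as a densely defined operator into $P_m\vfock$, let alone bounded. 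Consequently the Lumer--Phillips argument fails as stated, and you have no $F^m$ to run the energy estimate on. The Graded Sector Condition only tells you $\calA:\vsob{1}{\alpha}\to\vsob{-1}{\alpha-1}$, i.e.\ $\calA$ is of the \emph{same order} as $\calL_0$, not a relatively bounded perturbation.

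The paper's fix is to truncate in both $\calN$ and $\calL_0$: set $\calA^m_\pm:=\ic_{(1-\calL_0),\calN\leqslant m}\calA_\pm\ic_{(1-\calL_0),\calN\leqslant m}$. This projection onto a bounded region of Fourier space makes $\calA^m$ genuinely bounded on all $\vsob{\alpha}{\beta}$, and the approximate equation is then solved by a standard fixed-point argument in $C_T\vsob{1}{2}$. With this modification your energy estimate goes through verbatim (the commutator $[(1+\calN),\calL^m]=\calA^m_+-\calA^m_-$ is the paper's way of writing what you computed chaos-by-chaos). One small consequence: with the double truncation you can no longer argue that $(\calL^{m})^*G=\calL^*G$ for large $m$ even when $G$ has finite chaos, since the $\calL_0$-cutoff persists; the paper instead splits the error as $\langle(\calL^m-\calL)F^m,G\rangle+\langle\calL(F^m-F),G\rangle$ and shows each term vanishes using duality and the uniform $L^2_T\vsob{1}{2}$ bound.
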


It is a consequence of \eqref{eq:35} that both sides of \eqref{KBEweakformulation} are well-defined. However, a priori it is not clear that $F(0)$ is well-defined for $F \in \calX(V)$. Before we prove Theorem \ref{existenceKBE}, we exhibit the following lemma which gives an embedding that ensures that $F(0)$ is well-defined in this case. This lemma is essentially an application of the fundamental theorem of calculus.
\begin{lemma}\label{embeddingcont}
For each time horizon $T \geqslant 0$, there is a continuous embedding
$$L^2_T \calH^1_0 (V) \cap H^1_T \calH^{- 1}_0 (V) \subset C_T \vfock$$
Moreover, for $F\in L^2_T \calH^1_0 (V) \cap H^1_T \calH^{- 1}_0 (V)$, it holds
\begin{equation}\label{differentialformulahnorm}
\| F (t) \|^2_{\vfock} = \| F (s) \|^2_{\vfock} + 2 \int_s^t \langle F (r) , \partial_tF(r) \rangle_{\calH^1_0 (V),\calH^{- 1}_0 (V)} \dd r .
\end{equation}
\end{lemma}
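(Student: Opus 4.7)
The plan is a standard Gelfand-triple argument based on the chain $\vsob{1}{0}(V) \hookrightarrow \vfock \hookrightarrow \vsob{-1}{0}(V)$, in which the $\vfock$ inner product extends to the duality pairing on the outer terms. I would proceed in three steps: first prove \eqref{differentialformulahnorm} for smooth test elements; then derive from it a uniform $C_T\vfock$-bound in terms of the intersection norm; finally, extend by density.

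For the first step, I would take $F \in t\calC$. Such an $F$ is jointly smooth in the field and time variables and lies in every $\vsob{s}{\alpha}$, so the map $t \mapsto \|F(t)\|_{\vfock}^2$ is classically differentiable and the fundamental theorem of calculus gives
$$ \|F(t)\|_{\vfock}^2 - \|F(s)\|_{\vfock}^2 = 2\int_s^t \langle F(r), \partial_t F(r) \rangle_{\vfock} \dd r. $$
Since $F(r), \partial_t F(r) \in \vfock$, the $\vfock$ inner product on the right coincides with the $\vsob{1}{0}(V), \vsob{-1}{0}(V)$ duality pairing, which is \eqref{differentialformulahnorm}.

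For the second step, applying Cauchy--Schwarz to \eqref{differentialformulahnorm} yields
$$ \|F(t)\|_{\vfock}^2 \leqslant \|F(s)\|_{\vfock}^2 + 2\|F\|_{L^2_T\vsob{1}{0}}\|\partial_t F\|_{L^2_T\vsob{-1}{0}}. $$
The mean value theorem for integrals provides some $s_0 \in [0,T]$ with $\|F(s_0)\|_{\vfock}^2 \leqslant T^{-1}\|F\|_{L^2_T\vfock}^2$. Combined with the continuous embedding $\vsob{1}{0}(V) \hookrightarrow \vfock$ (since $(1-\calL_0)^{1/2}$ has spectrum in $[1,\infty)$), this delivers
$$ \|F\|_{C_T \vfock}^2 \lesssim_T \|F\|_{L^2_T\vsob{1}{0}}^2 + \|\partial_t F\|_{L^2_T\vsob{-1}{0}}^2. $$

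For the third step, $t\calC$ is dense in each of $L^2_T\vsob{1}{0}(V)$ and $H^1_T\vsob{-1}{0}(V)$ (as noted in Section \ref{sec:notat-wien-space}); joint density in the intersection follows by a time-mollification combined with an approximation in the spatial variable by cylinder functions. Given $F$ in the intersection, approximants $F_n \in t\calC$ are Cauchy in $C_T\vfock$ by the bound above, so $F$ admits a continuous representative in $\vfock$. Passing to the limit in \eqref{differentialformulahnorm} written for $F_n$, using Cauchy--Schwarz to control the duality pairing in the limit, yields \eqref{differentialformulahnorm} in general. I expect the main technical point to be the joint density step; although this is a textbook combination of temporal mollification and chaos-level truncation, it needs a moment of care to ensure that the approximation is simultaneous in both norms of the intersection.
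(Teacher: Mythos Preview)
Your argument is correct and follows the same standard Gelfand-triple route as the paper, with one mild difference in the choice of approximating sequence. The paper approximates a general $F$ by time-mollification only: it sets $F^\varepsilon(t) = \int \rho_\varepsilon(t-s)F(s)\,\dd s$ with $\rho_\varepsilon$ a smooth compactly supported mollifier, which lands in $C^\infty_T\vsob{1}{0}$ and for which convergence in both $L^2_T\vsob{1}{0}$ and $H^1_T\vsob{-1}{0}$ is immediate from standard convolution theory. This bypasses exactly the joint-density step you flag as the main technical point; your use of $t\calC$ works too, but requires the extra care you mention, whereas the paper's choice makes the simultaneous approximation automatic. The derivation of the $C_T\vfock$ bound and the passage to the limit are otherwise identical.
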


\begin{proof}
Our proof is based on \cite[{\textsection}5.9.2, Theorem 3]{Evans10_PartialDifferential}. Let $\rho \in C^\infty_c(\bbR)$, $\rho \geqslant 0$, be a smooth mollifier satisfying $\supp(\rho) \subset B_1(0)$ and define $\rho_\varepsilon = \varepsilon^{- 1} \rho(\varepsilon^{- 1} \cdummy)$. Let $F \in L^2_T \mathcal{H}^1_0 \cap H^1_T \mathcal{H}^{- 1}_0$ and define
\[ F^{\varepsilon} (t) = \rho_{\varepsilon} F (t) = \int_{\bbR} \rho_{\varepsilon} (t - s) F (s) \dd s, \quad t \in \bbR \]
where we extend $F (t)$ to be $0$ for $t$ outside of its original domain. Then we have $F^{\varepsilon} \in C^{\infty}_T \mathcal{H}^1_0$ and moreover $F^{\varepsilon} \in H^1_T \mathcal{H}^{-1}_0$ with weak time-derivative $\partial_tF^\varepsilon = \rho_\varepsilon \partial_tF$. It also holds $\| F^{\varepsilon} \|_{L^2_T\mathcal{H}^1_0} \lesssim \| F \|_{L^2_T \mathcal{H}^1_0}, \| F^{\varepsilon} \|_{H^1_T \mathcal{H}^{- 1}_0} \lesssim \| F \|_{H^1_T \mathcal{H}^{- 1}_0}$ and $F^{\varepsilon} \rightarrow F$ in $L^2_T \mathcal{H}^1_0$ and $H^1_T \mathcal{H}^{- 1}_0$. 
By considering $\frac{\dd}{\dd t}\| F^{\varepsilon} (t) \|^2$ we obtain for $s, t \in [0, T]$,

\begin{equation}
\| F^{\varepsilon} (t) \|^2 = \| F^{\varepsilon} (s) \|^2 + 2 \int_s^t \langle F^{\varepsilon} (r), \partial_tF^{\varepsilon} (r) \rangle \dd r . \label{maybetherelevantformula}
\end{equation}
Taking a limit as $\varepsilon \rightarrow 0$ proves \eqref{differentialformulahnorm}. Moreover, integrating \eqref{maybetherelevantformula} over $s \in [0, T]$ yields
\begin{align*}
\| F^{\varepsilon} (t) \|^2 &= T^{- 1} \int_0^T \|F^{\varepsilon} (s) \|^2 \dd s + 2 T^{- 1} \int_0^T \int_s^t \langle F^{\varepsilon} (r) , \partial_tF^{\varepsilon} (r) \rangle \dd r \dd s \\
&\leqslant \lVert F^\varepsilon \rVert_{L^2_T \vfock}^2 + \lVert F^\varepsilon \rVert_{L^2_T \vsob{1}{0}} \lVert \partial_tF^\varepsilon \rVert_{L^2_T \vsob{-1}{0}} \\
&\lesssim \lVert F^\varepsilon \rVert_{L^2_T \vsob{1}{0}}^2 + \lVert F^\varepsilon \rVert_{H^1_T \vsob{-1}{0}}^2
\end{align*}
Taking a supremum over $t \in [0, T]$ implies $\lVert F^\varepsilon \rVert_{C_T\vfock} \lesssim \lVert F^\varepsilon \rVert_{L^2_T \vsob{1}{0}} + \lVert F^\varepsilon \rVert_{H^1_T \vsob{-1}{0}}$. This proves the embedding $L^2_T \mathcal{H}^1_0 \cap H^1_T \mathcal{H}^{- 1}_0 \subset C_T \vfock$.

\end{proof}

\begin{proof}[Proof of Theorem \ref{existenceKBE}]
For $m \in \bbN$, define the truncation $\calA^m_\pm$ of $\calA_\pm$ according to
$$ \mathcal{A}^m_{\pm} := \ic_{(1 - \mathcal{L}_0), \mathcal{N} \leqslant m} \mathcal{A}_\pm \ic_{(1 - \mathcal{L}_0), \mathcal{N} \leqslant m} $$
By construction, it holds that $\mathcal{A}^m_{\pm} \in L(\vsob{\alpha}{\beta})$ for all $\alpha, \beta \in \bbR$.  As a consequence of Lemma \ref{lem:1}, it holds that uniformly among $m \in \bbN$,
\begin{equation}\label{eq:31}
\| \calA^m_+ - \calA^m_- \|_{L(\calH^1_2, \calH^{- 1}_0)} \leqslant \| \calA^m_+ \rVert_{L(\calH^1_2, \calH^{- 1}_0)} + \lVert \calA^m_- \|_{L(\calH^1_2, \calH^{- 1}_0)} \leqslant 10 \sqrt{J^1(1)}
\end{equation}
Due to the smallness assumption on $\beta^2$, it holds that the RHS is bounded by $c = 1/2$. 

Define $\mathcal{L}^m = \mathcal{L}_0 + \mathcal{A}^m_+ + \calA^m_-$. Our strategy is to begin by solving the truncated equation
\begin{equation}\label{approxabstractPDE}
F^m (t) = F_0 + \int_0^t \mathcal{L}^m  F^m (s) \dd s
\end{equation}
It is a standard argument (see the appendix, Section \ref{sec:proof-abstractpde}) that this equation exhibits a solution $F^m \in \calX(V)$. At which point, we seek a bound $\lVert F^m \rVert_{\calX(V)} \leqslant C$ that is uniform in $m \in \bbN$, at which point we can invoke weak-compactness to obtain a subsequential limit, $F^m \convw F$.

Consider, for the moment, the easier task of obtaining a uniform bound $\lVert F^m \rVert_{L^2_T \vsob{1}{0}} \leqslant C$. Combining \eqref{approxabstractPDE} and \eqref{maybetherelevantformula} yields
\begin{equation}\label{eq:30}
\begin{split}
\| F^m(t) \|^2 &= \| F_0 \|^2 + 2 \int_0^t \langle F^m (r), \calL^mF^m (r) \rangle \dd r \\
&= \| F_0 \|^2 + 2 \int_0^t \| F^m(r) \|^2 \dd r  -2\lVert F^m \rVert_{L^2_T \vsob{1}{0}}^2
\end{split}
\end{equation}
where we have used that the symmetric part of $1-\calL^m$ is given by $1-\calL_0$, independently of $m \in \bbN$. Dropping the last term on the RHS of \eqref{eq:30} and applying Gronwall implies that $\| F^m(t) \|^2 \leqslant \| F_0 \|^2 e^{2t}$, which is uniform among $m \in \bbN$. Plugging this back into \eqref{eq:30} yields a uniform bound $\lVert F^m \rVert_{L^2_T \vsob{1}{0}}^2 \lesssim_T \| F_0 \|^2$.

We can obtain a uniform bound for the stronger norm, $\lVert F^m \rVert_{L^2_T \vsob{1}{2}} \leqslant C$, by running the same argument with $(1 + \mathcal{N}) F^m$ in place of $F^m$:
\begin{align*}
\| F^m(t) \|^2_{\vsob{0}{2}} &= \| F_0 \|^2_{\vsob{0}{2}} + 2 \int_0^t \langle (1 + \mathcal{N})F^m (r), (1 + \mathcal{N}) \calL_m F^m (r) \rangle_{\vsob{-1}{-2}, \vsob{1}{2}} \dd r \\
&= \| F_0 \|^2_{\vsob{0}{2}} + 2 \int_0^t \| F^m(r) \|^2_{\vsob{0}{2}} \dd r - 2\lVert F^m \rVert_{L^2_T \vsob{1}{2}}^2 \\
&\qquad - 2\int_0^t \langle (1 + \mathcal{N})F^m (r), [(1 + \mathcal{N}), \calL_m] F^m (r) \rangle_{\vsob{-1}{-2}, \vsob{1}{2}} \dd r
\end{align*}
In comparison with \eqref{eq:30}, we have an additional term that comes from commuting $(1+\calN)$ and $\calL_m$, producing the commutator $[(1 + \mathcal{N}), \calL_m] := (1 + \mathcal{N}) \calL_m - \calL_m (1 + \mathcal{N})$. Using the observation $[(1 + \mathcal{N}), \calL_m] = \calA^m_+ - \calA^m_-$, we control this additional term simply according to
$$ |\int_0^t \langle (1 + \mathcal{N})F^m (r), [(1 + \mathcal{N}), \calL_m] F^m (r) \rangle_{\vsob{-1}{-2}, \vsob{1}{2}} \dd r | \leqslant c \lVert F^m \rVert_{L^2_T \vsob{1}{2}}^2 $$
where $c = 1/2$ comes from the RHS of \eqref{eq:31}. In total, we have derived
$$ \| F^m(t) \|^2_{\vsob{0}{2}} \leqslant \| F_0 \|^2_{\vsob{0}{2}} + 2 \int_0^t \| F^m(r) \|^2 \dd r - 2(1-c)\lVert F^m \rVert_{L^2_T \vsob{1}{0}}^2 $$
Crucially, the last term is still negative, thanks to $c \in (0, 1)$. From here we repeat the Gronwall argument to obtain the uniform bound among $m \in \bbN$ that $\lVert F^m \rVert_{L^2_T \vsob{1}{2}} \lesssim_{c, T} \| F_0 \|_{\vsob{0}{2}}$.

To obtain the uniform bound $\lVert F^m \rVert_{\calX(V)} \leqslant C$ it remains to control $\lVert F^m \rVert_{H^1_T\vsob{-1}{0}}$. This control is obtained due to the fact that $F^m$ solves equation \eqref{approxabstractPDE}, and that $\calL^m \in L(\calH^1_2 (V),\calH^{- 1}_0(V))$ with operator norm uniformly bounded among $m \in \bbN$.

The uniform bounds allow for us to apply the Banach-Alaoglu theorem, and we obtain a weakly convergent subsequence, $F^m \convw F$ in $\calX(V)$. By Lemma \ref{embeddingcont}, consideration of the linear functional $C_T \vfock \ni F \mapsto F(0)$ implies that $F(0) = F_0$.

It remains to show that $F$ fulfils \eqref{KBEweakformulation}. To that end, fix $G \in t\calC$. Equation \eqref{approxabstractPDE} implies that

\begin{eqnarray}
\langle \partial_t F^m, G \rangle_{L^2_T \mathcal{H}^{- 1}_0, L^2_T \mathcal{H}^1_0} & = & \langle \mathcal{L}^m F^m, G \rangle_{L^2_T \mathcal{H}^{- 1}_0, L^2_T \mathcal{H}^1_0} \nonumber\\
& = & \langle (\mathcal{L}^m - \mathcal{L}) F^m, G \rangle_{L^2_T \mathcal{H}^{- 1}_0, L^2_T \mathcal{H}^1_0} \nonumber\\
&  & + \langle \mathcal{L} (F^m - F), G \rangle_{L^2_T \mathcal{H}^{- 1}_0, L^2_T \mathcal{H}^1_0} + \langle \mathcal{L} F, G \rangle_{L^2_T \mathcal{H}^{- 1}_0, L^2_T \mathcal{H}^1_0} .\label{approxPDE}
\end{eqnarray}
Provided that we can show that the first two terms on the RHS vanish in the limit $m \rightarrow \infty$, then the result follows from the weak convergence $\partial_t F^m \convw \partial_t F$ in $L^2_T \mathcal{H}^{- 1}_0$.
For the first term, we have
\begin{align*}
\langle (\mathcal{L}^m - \mathcal{L})F^m, G \rangle_{L^2_T \mathcal{H}^{- 1}_0, L^2_T \mathcal{H}^1_0} & = \langle \mathcal{A} (\ic - \ic_{(1 - \mathcal{L}_0), \mathcal{N} \leqslant m}) F^m, G \rangle_{L^2_T \mathcal{H}^{- 1}_0, L^2_T \mathcal{H}^1_0}\\
& = -\langle F^m, (\ic - \ic_{(1 - \mathcal{L}_0), \mathcal{N} \leqslant m}) \mathcal{A} G \rangle_{L^2_T \mathcal{H}^1_2, L^2_T \mathcal{H}^{- 1}_{-2}}\\
& \leqslant_{\text{abs}} \| F^m \|_{L^2_T \mathcal{H}^1_2} \| (\ic - \ic_{(1 - \mathcal{L}_0), \mathcal{N} \leqslant m}) \mathcal{A} G \|_{L^2_T \mathcal{H}^{- 1}_{-2}},
\end{align*}
which vanishes as $m \rightarrow \infty$ due to the uniform control on $\| F^m \|_{\calX(V)}$ and dominated convergence. For the second term, we use the weak convergence $F^m \rightarrow F$ in $L^2_T \mathcal{H}^1_2$ to conclude
$$ \langle \mathcal{L} (F^m - F), G \rangle_{L^2_T  \mathcal{H}^{- 1}_0, L^2_T \mathcal{H}^1_0} = \langle F^m -  F, \mathcal{L}^{\ast} G \rangle_{L^2_T \mathcal{H}^1_2, L^2_T  \mathcal{H}^{- 1}_{-2}} \rightarrow 0 $$

\end{proof}

\subsection{Uniqueness of the cylinder function martingale problem}
The goal of this section is to prove Theorem \ref{cylmartprob}, which gives us uniqueness of solutions for the cylinder martingale problem. We finish this section by proving our main theorem concerning the convergence of the environment, Theorem \ref{thm:env}, which is a short application of the results from this section and from Section \ref{sec:existenceenergy}.

\begin{theorem}\label{cylmartprob}
Let $V \in \calS'(\bbR)$ satisfy Assumption \ref{ass:posdef}, let $\beta^2 > 0$. Then, given a distribution $\pi_0 \ll \pi$, there is at most one law on the path space $C(\bbR_{\geqslant 0}, \calS'(\bbR))$ for which $\eta$ is a solution to the cylinder function martingale problem, Definition \ref{def:mcyl}, with initial state distribution $\law(\eta_0) = \pi_0$. Moreover, if $\eta$ is such a solution, then it is a Markov process.
\end{theorem}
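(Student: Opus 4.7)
The strategy is classical: combine the Kolmogorov Backward Equation provided by Theorem \ref{existenceKBE} with the martingale property to pin down the one-dimensional marginals, then bootstrap to finite-dimensional marginals and the Markov property.

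\emph{Step 1 (Extension of the martingale property).} First I would show that for every $F \in \calX(V)$ the process
\[ M_t(F) := F(t,\eta_t) - F(0,\eta_0) - \int_0^t (\partial_s + \calL)F(s,\eta_s)\, \dd s \]
is a martingale, where the additive functional is interpreted via Definition \ref{def:i} using \eqref{eq:35}. This is achieved by approximating $F$ in $\calX(V)$ by $F_n \in t\calC$: the boundary terms $F(t,\eta_t), F(0,\eta_0)$ are controlled via incompressibility and the embedding $\calX(V) \hookrightarrow C_T\vfock$ of Lemma \ref{embeddingcont}, while the additive-functional term is controlled via the Itô-trick bound \eqref{eq:17}, which exactly expresses continuity of $I_T$ on $L^2_T\vsob{-1}{0}$.

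\emph{Step 2 (Duality via KBE).} Fix $T > 0$ and $F_0 \in \calC$. Let $G \in \calX(V)$ solve $\partial_t G = \calL G$ with $G(0) = F_0$ via Theorem \ref{existenceKBE}, and set $F(s,\omega) := G(T-s, \omega)$. Then $F \in \calX(V)$ and $(\partial_s + \calL)F = 0$ as an identity in $L^2_T\vsob{-1}{0}$, so applying Step 1 at time $T$ yields
\[ \bbE[F_0(\eta_T)] = \bbE[F(0,\eta_0)] = \int G(T,\omega) \, \pi_0(\dd\omega), \]
which depends only on $F_0, T$ and $\pi_0$, not on the particular solution. Since $\calC$ is dense in $\vfock$ and incompressibility gives $\bbE[|F_0(\eta_T)|] \lesssim \|F_0\|_{\vfock}$, extending by continuity pins down $\law(\eta_T)$ uniquely.

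\emph{Step 3 (Finite-dimensional marginals and Markov property).} To promote one-dimensional uniqueness to the full statement, I would argue that for any $t \geqslant 0$ the regular conditional distribution of $(\eta_{t+s})_{s \geqslant 0}$ given $\calF_t$ is again a solution of the cylinder function martingale problem, with random initial law $\law(\eta_t \mid \calF_t)$. The martingale property transfers to conditional expectations by standard disintegration, incompressibility transfers by Fubini, and the Itô-trick bound at $p=1$ is preserved under conditioning via Jensen. By the uniqueness of one-dimensional marginals from Step 2, the conditional law of $\eta_{t+s}$ depends on $\calF_t$ only through $\eta_t$; this yields the Markov property and, by iterating along finite sequences of times, uniqueness of all finite-dimensional marginals and hence of the law on path space.

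\emph{Main obstacle.} The delicate point is Step 1: one must verify that both the Itô-trick control of the additive functional and the incompressibility control of the boundary terms are stable under $\calX(V)$-approximation of cylinder functions, which is somewhat tight because Definition \ref{def:mcyl} imposes the Itô trick only at $p=1$, leaving only $L^1$ convergence. A secondary subtlety is the conditioning step: producing a version of the regular conditional distribution that is, almost surely, a genuine solution of the cylinder function martingale problem (in particular satisfying a quantitative Itô trick strong enough to feed back into Step 2) requires careful bookkeeping, and may be cleanest via a Stroock--Varadhan-style abstract Markov family constructed from the semigroup built in Step 2.
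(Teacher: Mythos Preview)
Your Steps 1 and 2 match the paper's approach (the paper packages Step 1 as Lemma \ref{martingalepropextension}), but there is a genuine gap: you invoke Theorem \ref{existenceKBE} without checking its hypotheses. That theorem requires the smallness condition $J^1(1) \leqslant 1/400$, which is \emph{not} assumed in Theorem \ref{cylmartprob}. As written, your argument only establishes uniqueness in the small-coupling regime. The paper closes this gap via the diffusive rescaling of Proposition \ref{prp:4}: if $\eta$ solves the cylinder function martingale problem for $(V,\beta)$, then $Q_N\eta$ solves it for rescaled parameters $(V_N,\beta_N)$, and one computes $J^1_N(1) = J^1(N^2) \to 0$ as $N\to\infty$ by Proposition \ref{prp:iib}. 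Choosing $N$ large enough places the rescaled problem inside the domain of Theorem \ref{existenceKBE}, and uniqueness / the Markov property transfer back through the invertible map $Q_N$.

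Separately, your Step 3 is more involved than necessary. You propose showing that the regular conditional distribution of $(\eta_{t+s})_{s\geqslant 0}$ given $\calF_t$ is again a solution of the cylinder function martingale problem, and you correctly flag that verifying the Itô trick for the conditional law is delicate. The paper sidesteps this entirely: once Step 1 is in place, the martingale property of $M_\cdot(\FHat)$ (with $\FHat$ the time-reversed KBE solution with terminal datum $h_{N+1}$) gives directly
\[
\bbE[h_1(\eta_{t_1})\cdots h_{N+1}(\eta_{t_{N+1}})] = \bbE[h_1(\eta_{t_1})\cdots h_N(\eta_{t_N})\,\FHat(t_N,\eta_{t_N})],
\]
by conditioning on $\calF_{t_N}$. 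The right-hand side is an $N$-point expectation to which the inductive hypothesis applies, and the Markov property is read off from the same identity. No disintegration of the Itô trick is needed.
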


As previously mentioned, a key object in the proof of Theorem \ref{cylmartprob} will be the Dynkin martingales associated to elements $F \in \calX(V)$.

\begin{lemma}\label{martingalepropextension}
Let $\eta$ be a solution to the cylinder function martingale problem associated to $\calL$. Then for all $F \in \calX(V)$, it holds that $F(t, \eta_t) - F (0, \eta_0) - \int (\partial_t + \calL) F(s, \eta_s) \dd s$ is a martingale in the filtration generated by $\eta$.
\end{lemma}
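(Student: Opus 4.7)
The plan is to approximate $F \in \calX(V)$ by time-dependent cylinder functions $F^n \in t\calC$ in the $\calX(V)$-norm and then pass to the limit in the martingale identity. For $F^n \in t\calC$, Definition \ref{def:mcyl} directly yields that
$$M^n_t := F^n(t, \eta_t) - F^n(0, \eta_0) - \int_0^t (\partial_t + \calL)F^n(s, \eta_s)\dd s$$
is a martingale, where the integral is understood via $I_T$ from Definition \ref{def:i} after combining the $\calL_0$- and $\calA$-parts in \eqref{eq:33}. It then suffices to show $M^n_t \to M_t$ in $L^1(\bbP)$ for each $t \in [0,T]$, since the martingale property is preserved under $L^1$ limits.

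The crucial ingredient is density of $t\calC$ in $\calX(V) = L^2_T\vsob{1}{2}\cap H^1_T\vsob{-1}{0}$, and I expect this to be the main obstacle. My approach is a three-step truncation: (i) chaos-truncate using a spectral cutoff of $\calN$; (ii) for each fixed chaos level, approximate the symmetric Fourier kernel by a smooth compactly supported symmetric kernel in $\calS(\bbR^n)$; (iii) mollify in the time variable. Since $\partial_t$ and the chaos projections commute, since time convolution commutes with the spatial multipliers defining $(1-\calL_0)^{s/2}$ and $(1+\calN)^{\alpha/2}$, and since the intersection space $\calX(V)$ controls both a spatial norm and a time-derivative norm, the combined procedure produces approximants converging simultaneously in both factors of the intersection.

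Given density, I pass to the limit as follows. By Lemma \ref{embeddingcont}, $\calX(V) \hookrightarrow C_T\vfock$ continuously, so $\|F^n - F\|_{C_T\vfock} \to 0$. Extending the incompressibility bound of Definition \ref{def:incomp} from $\calC$ to all of $\vfock$ by density gives
$$\sup_{t \in [0,T]} \bbE\bigl[|F^n(t,\eta_t) - F(t,\eta_t)|\bigr] \lesssim \|F^n - F\|_{C_T\vfock} \to 0.$$
For the additive functional, the bound $\calL \in L(\vsob{1}{2}, \vsob{-1}{0})$ (Equation \eqref{eq:34}) together with the definition of $\calX(V)$ makes $(\partial_t + \calL) : \calX(V) \to L^2_T\vsob{-1}{0}$ continuous, so $(\partial_t + \calL)F^n \to (\partial_t + \calL)F$ in $L^2_T\vsob{-1}{0}$. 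The Itô trick at $p=1$ (Definition \ref{def:itoTrick}) renders $I_T : L^2_T\vsob{-1}{0} \to L^1_\bbP C_T$ continuous, yielding
$$\sup_{t \in [0,T]} \bbE\Bigl[\Bigl|\int_0^t (\partial_t + \calL)F^n(s, \eta_s)\dd s - \int_0^t (\partial_t + \calL)F(s, \eta_s)\dd s\Bigr|\Bigr] \to 0.$$
Combining the two convergences, $M^n_t \to M_t$ in $L^1(\bbP)$ for each $t \in [0, T]$, and the martingale identity is inherited by $M$, completing the argument.
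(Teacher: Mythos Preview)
Your proposal is correct and follows essentially the same approach as the paper: approximate $F$ by $F^n \in t\calC$ in $\calX(V)$, use Lemma~\ref{embeddingcont} plus incompressibility for the boundary terms, use \eqref{eq:34} plus the It\^o trick for the additive functional, and pass the martingale property through the $L^1(\bbP)$ limit. The only difference is that you spell out the density of $t\calC$ in $\calX(V)$ via a three-step truncation, whereas the paper simply invokes it without further comment; your sketch of that step is reasonable and fills a gap the paper leaves implicit.
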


\begin{proof}
Let $(F^n)_{n \in \bbN} \subset t\calC$ such that $F^n \rightarrow F$ in $\calX(V)$. By Lemma \ref{embeddingcont} we have that $F^n \rightarrow F$ in $C_T \vfock$, therefore, by incompressibility, it holds for all $s \in [0, T]$ that $F^n(s, \eta_s) \rightarrow F(s, \eta_s)$ in $L^1(\bbP)$. Furthermore, by \eqref{eq:34},  $(\partial_t + \calL) F^n \rightarrow (\partial_t + \calL) F$ in $L^2_T\vsob{-1}{0}$ so that by the Itô trick in the case $p = 1$, it holds $I_t (\partial_t + \calL) F^n \rightarrow I_t (\partial_t + \calL) F$ in $L^1(\bbP)$. In conclusion, we have that for all $t \in [0, T]$, it holds
\begin{equation}\label{eq:36}
F(t, \eta_t) - F (0, \eta_0) - I_t( (\partial_t + \calL) F ) = \lim_n \big\{ F^n(t, \eta_t) - F^n(0, \eta_0) - I_t( (\partial_t + \calL) F^n ) \big\}
\end{equation}
where the limit is taken in $L^1(\bbP)$.

Define $M_t$ (resp. $M^n_t$) according to the LHS (resp. RHS) of \eqref{eq:36}. By the definition of solution to the cylinder function martingale problem, it holds that $(M^n_t)_{t \in [0, T]}$ is a martingale: for all $0 \leqslant s \leqslant t \leqslant T$ and bounded and continuous $G : C([0, s], \calS')$ it holds
\begin{equation}\label{eq:37}
\bbE[ M^n_t G(\eta_r : r \in [0, s]) ] = \bbE[ M^n_s G(\eta_r : r \in [0, s]) ]
\end{equation}
The $L^1(\bbP)$ convergence of \eqref{eq:36} allows for us to pass to the limit in \eqref{eq:37} and conclude that $(M_t)_{t \in [0, T]}$ is also a martingale.
\end{proof}

One further obstacle in the proof of Theorem \ref{cylmartprob} is the smallness assumption that was needed in Theorem \ref{existenceKBE}. To overcome this assumption, we use the fact that our notion of solution behaves as expected under diffusive scaling. This is the content of the following proposition. We omit the proof, for which the only task is the rescale the three conditions appearing in Definition \ref{def:mcyl}.

\begin{proposition}\label{prp:4}
Define the rescaling map $Q_N : C(\bbR_{\geqslant 0}, \calS'(\bbR)) \rightarrow C(\bbR_{\geqslant 0}, \calS'(\bbR))$ according to $Q_N \eta(t, x) := N^{-1/2} \eta(t/N^2,x/N)$. If $\eta$ is a solution of the cylinder function martingale problem corresponding to parameters $(V, \beta, \pi_0)$, in which $\pi_0 := \law(\eta_0)$, then $Q_N\eta$ is a solution corresponding to $(V_N, \beta_N, Q_N\pi_0)$, in which $V_N(x) := N^{-1} V(N^{-1} x)$, $\beta_N = N^{-1/2} \beta$ and $Q_N\pi_0$ is defined to be the law of $N^{-1/2} \omega(N^{-1} x)$ when $\omega \sim \pi_0$.
\end{proposition}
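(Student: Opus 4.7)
The plan is to transport each of the three conditions of Definition \ref{def:mcyl} through the change of variables $Q_N$. Writing $T_N : \calS'(\bbR) \to \calS'(\bbR)$ for the spatial rescaling $T_N\omega(x) := N^{-1/2}\omega(x/N)$, so that $(Q_N\eta)_t = T_N \eta_{t/N^2}$, a direct computation with Gaussian covariances (using $\VHat_N(q) = \VHat(Nq)$) shows that $T_N$ pushes $\pi(V)$ forward to $\pi(V_N)$. Consequently the pullback $T_N^\ast : L^2(V_N) \to L^2(V)$, $T_N^\ast g(\omega) := g(T_N\omega)$, is a Hilbert-space isometry preserving chaos. Since $T_N\omega(\phi) = N^{1/2}\omega(\phi(N\cdummy))$, the pullback $\bar h(u,\omega) := h(uN^2, T_N\omega)$ of a cylinder function $h \in t\calC$ in the $V_N$-Fock space is again a cylinder function, now in the $V$-Fock space. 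Tracking the kernels in momentum space yields the intertwining identities
\begin{equation*}
T_N^\ast \calN = \calN \, T_N^\ast, \qquad T_N^\ast \calL_0 = N^{-2} \calL_0 \, T_N^\ast, \qquad T_N^\ast \calA_{\pm}^{V_N,\beta_N} = N^{-2} \calA_{\pm}^{V,\beta} \, T_N^\ast,
\end{equation*}
where $\calA_\pm^{V,\beta}$ (respectively $\calA_\pm^{V_N,\beta_N}$) denotes the operator from \eqref{eq:2} with its choice of parameters, and the last identity uses precisely the scaling $\beta_N = N^{-1/2}\beta$. Writing $\calL^{V,\beta}$ and $\calL^{V_N,\beta_N}$ for the two generators, one then has $T_N^\ast \calL^{V_N,\beta_N} = N^{-2} \calL^{V,\beta} T_N^\ast$ on cylinder functions.

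Next I would combine the intertwining identity with the time change $s = uN^2$. The chain rule gives $(\partial_u + \calL^{V,\beta})\bar h(u,\omega) = N^2 (\partial_s + \calL^{V_N,\beta_N}) h(uN^2, T_N\omega)$, and a direct substitution produces
\begin{equation*}
\int_0^t f(s, Y_s) \dd s = \int_0^{t/N^2} \bar f(u, \eta_u) \dd u, \qquad \bar f(u, \omega) := N^2 f(uN^2, T_N\omega),
\end{equation*}
for $Y := Q_N\eta$ and any admissible time-dependent function $f$. Specialising to $f = (\partial_s + \calL^{V_N,\beta_N}) h$ gives the central identity $M^{V_N,\beta_N}_t(h) = M^{V,\beta}_{t/N^2}(\bar h)$, where $M^{V,\beta}$ and $M^{V_N,\beta_N}$ are the martingale candidates of \eqref{eq:33} for $\eta$ and $Y$ respectively. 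The martingale property for $\eta$ with all cylinder functions therefore transfers to the martingale property for $Y$ after a deterministic time change.

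The remaining two conditions transport in the same way. Incompressibility for $Y$ with horizon $T$ follows from the isometry $T_N^\ast$ and the incompressibility of $\eta$ with horizon $T/N^2$. The Itô trick for $Y$ reduces, via the same substitution, to estimating $\|\bar f\|_{L^2_{T/N^2}\vsob{-1}{0}}$ by a multiple of $\|f\|_{L^2_T \calH^{-1}_0(V_N)}$; this follows from the operator identity $(1-\calL_0) T_N^\ast = T_N^\ast(1 - N^2\calL_0)$ together with the monotonicity $(1 - N^2\calL_0)^{-1/2} \leqslant (1-\calL_0)^{-1/2}$ for $N \geqslant 1$, combined with the commutation $\calN T_N^\ast = T_N^\ast\calN$ which preserves the hypercontractivity factor $\sqrt{2p-1}^\calN$. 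The initial-condition compatibility $\law(Y_0) = Q_N\pi_0 \ll \pi(V_N)$ with the required $L^2$-density bound is tautological from the isometry. The whole argument is essentially mechanical bookkeeping; the only genuinely non-trivial step is the kernel computation yielding the intertwining identity for $\calA_\pm$, which is what forces the choice $\beta_N = N^{-1/2}\beta$.
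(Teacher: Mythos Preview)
Your proposal is correct and follows precisely the approach the paper indicates: the paper omits the proof entirely, stating only that ``the only task is to rescale the three conditions appearing in Definition~\ref{def:mcyl}'', and you have carried out exactly that rescaling with the right intertwining identities. Your kernel computation forcing $\beta_N = N^{-1/2}\beta$ and your handling of the It\^o-trick constant via $(1 - N^2\calL_0)^{-1/2} \leqslant (1-\calL_0)^{-1/2}$ are the substantive points, and both are correct.
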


\begin{proof}[Proof of Theorem \ref{cylmartprob}]
Suppose first that $(V, \beta)$ are such that $J^1(1) < 1/400$, as this puts us in the domain of Theorem \ref{existenceKBE}. Therefore, for $F_0 \in \calC$, there exists $F \in \calX(V)$ which solves the Kolmogorov Backward Equation. Define $\FHat \in \calX_T(V)$ according to $\FHat(t) = F(T - t)$ for $t \in [0, T]$. Then by construction, $(\partial_t + \mathcal{L} )\FHat = 0$ as an element of $\vsob{-1}{0}$ and therefore by Lemma \ref{martingalepropextension}, it holds that
\begin{equation}\label{dualityKBE}
\bbE [F_0(\eta_T)] = \bbE [F(T, \eta_0)]
\end{equation}
This implies uniqueness of one-dimensional distributions.

Uniqueness of the process follows provided we can show uniqueness of the $N$-dimensional distributions for arbitrary $N \in \bbN$. This is derived in much the same way as \cite{GubinelPerkows20_InfinitesimalGenerator}, by performing an induction in $N$. TO that end, let us write $\calX_T(V)$ in place of $\calX(V)$ to emphasise the dependence on $T$.

Assume the uniqueness holds for $N \in \bbN$, and fix $0 \leqslant t_1 < \ldots < t_{N + 1} \leqslant T$. Let $h_i \in \calC$ for $i \in 1{:}(N+1)$. Let $F \in \calX_{t_{N+1}}(V)$ be the solution of the Kolmogorov Backward Equation with $F_0 = h_{N+1}$. Then, again due to the above martingale property of Lemma \ref{martingalepropextension}, it holds
\begin{equation}\label{finitediminductive}
\bbE[h_1(\eta_{t_1}) \ldots h_{N + 1}(\eta_{t_{N + 1}})] = \bbE [h_1 (\eta_{t_1}) \ldots h_N (\eta_{t_N}) \FHat (t_N, \eta_{t_N})]
\end{equation}
Uniqueness follows by the inductive hypothesis.
again by incompressibility and denseness of $\vsob{1}{1}$ in $\vfock$.

The Markov property can be concluded from \eqref{finitediminductive} which shows that
$$ \bbE [h_{N + 1} (\eta_{t_{N + 1}}) \mid (\eta_t)_{t \in [0, t_N]} ] = \FHat (t_N, \eta_{t_N}) $$

It remains to relax the assumption of smallness. For general $(V, \beta)$ case, we invoke Proposition \ref{prp:4}. This tells us that $Q_N\eta$ solves the cylinder function martingale problem with respect to the rescaled parameters $(V_N, \beta_N, Q_N\pi_0)$, where the rescaled parameters are as given in that proposition. Let $J^s_N(\lambda)$ denote the quantity of \eqref{eq:46} in the case of $(V, \beta) = (V_N, \beta_N)$. It holds $J^1_N(1) = J^1(N^2)$.

Due to Proposition \ref{prp:iib}, it holds that $\lim_{N \rightarrow \infty} J^1_N(1) = 0$, so in particular, we may take $N \in \bbN$ large enough so that $J^1_N(1) < 1/400$. In particular, the above argument ensures that the law of $Q_N \eta$ is uniquely determined, and that $Q_N\eta$ is a Markov process. These properties are preserved upon inverting the map $Q_N$.
\end{proof}

As a consequence of the duality identity \eqref{dualityKBE}, varying $\pi_0 \ll \pi$ directly leads to the following
\begin{corollary}
The solutions in Theorem \ref{existenceKBE} are unique in terms of the initial condition $F_0$.
\end{corollary}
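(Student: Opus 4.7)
The plan is to leverage the duality identity \eqref{dualityKBE} by letting the initial environment distribution $\pi_0$ vary. Given two solutions $F, F' \in \calX(V)$ of \eqref{eq:18} sharing the same initial condition $F_0 \in \calC$, set $G := F - F'$. The identity \eqref{dualityKBE}, as derived in the proof of Theorem \ref{cylmartprob} via Lemma \ref{martingalepropextension}, applies to any element of $\calX(V)$ that solves the KBE, not just the one constructed in Theorem \ref{existenceKBE}; in particular, it is valid simultaneously for $F$ and $F'$. Subtracting the two identities yields
$$\langle G(T), \dd \pi_0/\dd \pi \rangle_{\vfock} \,=\, \bbE_{\pi_0}[G(T, \cdot)] \,=\, 0$$
for every probability measure $\pi_0 \ll \pi$ such that the environment process started from $\pi_0$ is a solution to the cylinder function martingale problem of Definition \ref{def:mcyl}.

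The second step is to verify that the class of admissible $\pi_0$ is dense enough in $L^2(V)$. By Proposition \ref{prp:tight}, whenever $\lVert \dd \pi_0 / \dd \pi \rVert_{\vfock} < \infty$ the natural approximation satisfies the Itô trick for every $p \geqslant 1$, with constants depending only on $\lVert \dd \pi_0 / \dd \pi \rVert_{\vfock}$. The subsequential limit produced by Theorem \ref{thm:exist} inherits this property, so the hypothesis $p_0 > 4$ of Theorem \ref{thm:etoc} is met and the limit also solves the cylinder function martingale problem. Consequently the duality identity is at our disposal for every $\pi_0$ with $\dd \pi_0 / \dd \pi \in L^2(V)$.

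The final step is an elementary $L^2(V)$-density argument. Taking $\pi_0 = \pi$ kills the zeroth-chaos part of $G(T)$. For any bounded mean-zero $f \in L^2(V)$ with $\lVert f \rVert_{L^\infty} < 1$, the function $1 + f$ is a non-negative $L^2(V)$-density with unit $\pi$-mass, so testing against it gives $\langle G(T), f \rangle_{\vfock} = 0$; as such $f$ are dense in the mean-zero subspace of $L^2(V)$, this forces $G(T) = 0$ in $\vfock$. Since $T > 0$ was arbitrary and Lemma \ref{embeddingcont} gives $G \in C_T \vfock$, we conclude $G \equiv 0$ in $\calX(V)$. The only real care point is the chain of references \ref{prp:tight}--\ref{thm:etoc} needed to ensure \eqref{dualityKBE} applies to $G$ for a rich enough family of $\pi_0$; once that is in place, the $L^2$-density step is essentially automatic.
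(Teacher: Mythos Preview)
Your proof is correct and follows exactly the approach the paper sketches in one sentence (``As a consequence of the duality identity \eqref{dualityKBE}, varying $\pi_0 \ll \pi$ directly leads to the following''). You have carefully spelled out the two points the paper leaves implicit: that the duality argument of Lemma~\ref{martingalepropextension} applies to \emph{any} $F\in\calX(V)$ solving the KBE (not just the one produced by Theorem~\ref{existenceKBE}), and that for every $\pi_0$ with $L^2$ density the existence chain Proposition~\ref{prp:tight}--Theorem~\ref{thm:exist}--Theorem~\ref{thm:etoc} supplies a process $\eta$ to which \eqref{dualityKBE} can be applied.
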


\begin{proof}[Proof of Theorem \ref{thm:env}]
Fix $T \geqslant  0$. Thanks to Proposition \ref{prp:tight}, we know that $(\eta^\varepsilon : \varepsilon \in (0, 1))$ is tight as a sequence of measures on the path space $C([0, T], \calS'(\bbR))$, and we know from Theorem \ref{thm:exist} that any subsequential limit $\eta$ is an energy solution. Theorem \ref{thm:etoc} tells that $\eta$ is a solution to the cylinder function martingale problem, in which case the result follows after applying Theorem \ref{cylmartprob}.

The only remaining claim to be proved is that $\pi_0 = \pi$ yields a time-stationary solution. This property is inherited from the natural approximation. Indeed,
$$ \law(\eta_t) = \lim_{\varepsilon \rightarrow 0} \law(\eta^\varepsilon_t) = \lim_{\varepsilon \rightarrow 0} \pi^\varepsilon = \pi $$
\end{proof}
\begin{remark}\label{rmk:criticality}
The majority of our analysis depends only on the finiteness $J^1(\lambda) < \infty$. For the purpose of this work, we consider only dimension $d = 1$, in which case it is convenient to impose the equivalent condition, Assumption \ref{ass:posdef}. In general dimensions, one could use our techniques to prove existence and uniqueness, working directly under the assumption $J^1(\lambda) < \infty$. This is of interest because such a condition is invariant under the diffusive scaling of Proposition \ref{prp:4}, which is to say that it is scaling critical. 
\end{remark}
\section{Construction and properties of the polymer}\label{sec:limit-srbp}
The goal of this section it to go from the SPDE describing the environment $\eta$ to the diffusion $X$. First, in Subsection \ref{subsec:constructionX}, we will first give a canonical construction of $X$, Definition \ref{defconstructionX}, and then show in Theorem \ref{thm:Xconvergence} that it is the limit in law of the natural approximation $(X^\varepsilon : \varepsilon \in (0, 1))$. Then, in Subsection \ref{sec:char-srbp}, we give a notion of energy solution $X$ to the SDE and prove in Theorem \ref{wellpsoednessenergysolutionsX} that this notion is well-posed and agree with the construction from Definition \ref{defconstructionX}. Finally, in Subsection \ref{subsec:superdiffusivity} we then show that $X$ behaves superdiffusively, using the connection to the environment process $\eta$ and a more fine-grained analysis of its generator.

\subsection{Construction of SRBP and convergence}\label{subsec:constructionX}
In the smooth setting, the polymer is given in terms of the environment and the Brownian motion according to the identity \eqref{eq:1}, which we restate here for convenience,
\begin{equation}\label{Xfrometa}
X_t = B_t - \beta \int_0^t f(\eta_s) \dd s,
\end{equation}
in which $f(\omega) = \omega(0)$. As for the singular setting, we have now constructed the environment $\eta$, and we aim to use this identity as a definition of the polymer.

To that end, suppose that $(\eta_t)_{t \geqslant 0}$ is an energy solution, as in Definition \ref{def:energysolution}. To define the additive functional in \eqref{Xfrometa} we use Definition \ref{def:i}, for which we need only check that $f \in \vsob{-1}{0}$. This follows from Assumption \ref{ass:posdef}, which ensures
\[ \|f\|_{\vsob{-1}{0}}^2 = \int_\bbR \frac{\VHat(p)}{1 + \thalf |p|^2} \dd p < \infty . \]

As for the Brownian motion $(B_t)_{t \geqslant 0}$ in \eqref{Xfrometa}, this also needs to be recovered from $\eta$. This is a classical problem when working with martingale problems: although we think of $\eta$ as being a solution to the SPDE \eqref{eq:spde}, we never explicitly reference the noise $(B_t)_{t \geqslant 0}$ that drives the equation. The noise is present only implicitly within the martingales $M(\ell_\phi)$ of \eqref{eq:32}, which ought to obey the representation $M_t(\ell_\phi) = \int_0^t \eta_s(\nabla \phi) \dd B_s$. To recover $(B_t)_{t \geqslant 0}$, we have to invert this stochastic integral.

To that end, fix a non-zero $\phi\in\mathcal{S}$, and define
\begin{equation}\label{eq:4}
B_t := \int_0^t \frac{\ic\{\eta_s(\nabla \phi)\neq 0\}}{\eta_s(\nabla\phi)} \dd M_s(\ell_\phi) .
\end{equation}
To see that this stochastic integral is well-defined, we compute the expected quadratic variation:
\begin{align*}
\bbE \Big[\int_0^t \Big|\frac{\ic\{\eta_s(\nabla \phi)\neq 0\}}{\eta_s(\nabla \phi)}\Big|^2 \dd \langle M(\ell_\phi)\rangle_s\Big] &= \bbE \Big[\int_0^t \ic\{\eta_s(\nabla \phi)\neq 0\} \dd s\Big] \\
&= \int_0^t \bbP(\eta_s(\nabla \phi)\neq 0)\dd s = t
\end{align*}
In the first line, we used \eqref{eq:qvenergy}. In the last line we used the absolute continuity $\law (\eta_s) \ll \pi$, which is derived from the incompressibility condition.

To see that $B$ is a Brownian motion, we use Lévy's characterisation. We have $t - \langle B\rangle_t = \int_0^t \ic\{\eta_s(\nabla \phi) = 0\} \dd s$, which is non-negative. By the argument above, it is expectation zero, and therefore zero almost surely. This implies that $B$ is indeed a Brownian motion.

Moreover, it holds that $B$ is independent of our choice of $\phi$. Indeed, consider elements $\phi_1, \phi_2 \in \calS$. Applying \eqref{eq:qvenergy} to $\phi = \phi_1+\phi_2$ we obtain the quadratic covariation
\[\langle M(\ell_{\phi_1}),M(\ell_{\phi_2})\rangle_t=\int_0^t \eta_s(\nabla \phi_1)\eta_s(\nabla \phi_2)\dd s\]
Denoting the corresponding Brownian motions by $B^{\phi_1},B^{\phi_2}$, one has
\begin{align*}
\bbE \langle B^{\phi_1}-B^{\phi_2}\rangle_t &= \bbE\langle B^{\phi_1}\rangle_t-2 \bbE\langle B^{\phi_1},B^{\phi_2}\rangle_t+ \bbE\langle B^{\phi_2}\rangle_t \\
&=2t-2 \bbE\big[ \int_0^t \frac{\ic_{\eta_s(\nabla \phi_1)\neq 0}}{\eta_s(\nabla \phi_1)} \frac{\ic_{\eta_s(\nabla \phi_2)\neq 0}}{\eta_s(\nabla \phi_2)} \dd \langle M(\ell_{\phi_1}),M(\ell_{\phi_2})\rangle_s \big] \\ &= 0
\end{align*}
\begin{definition}\label{defconstructionX}
Let $(\eta_t)_{t \geqslant 0}$ be an energy solution with coupling constant $\beta^2 > 0$ and initial state distribution $\pi_0 := \law(\eta_0)$. That is to say, $\eta \sim \text{ENV}(V, \beta, \pi_0)$. We write $X=\mathcal{X}^V(\eta)$ for the process $X$ given according to \eqref{Xfrometa}, in which the additive functional and the Brownian motion are derived from $\eta$ in the above way, and we write $\text{SRBP}(V, \beta, \pi_0)$ for the law of $X$ on the path space $C(\bbR_{\geqslant 0}, \bbR)$.
\end{definition}

We now prove Theorem \ref{thm:main}, which we restate here in a slightly stronger form so as to include the convergence of the random environment.
\begin{theorem}\label{thm:Xconvergence}
Let $V \in \calS'(\bbR)$ satisfy Assumption \ref{ass:posdef}, let $\pi_0 \ll \pi(V)$ be such that $\dd \pi_0 / \dd \pi \in L^2(V)$, and let $\eta \sim ENV(V, \beta, \pi_0)$ be the environment process as given by Theorem \ref{thm:env}. Recall the definition of the natural approximation $(\omega^\varepsilon, X^\varepsilon)$ from Section \ref{sec:model-main-results} corresponding to the mollified interaction $V^\varepsilon$. Then, as distributions on $\mathcal{S}'(\bbR) \times C(\bbR_+,\bbR)$, it holds
\begin{equation}\label{eq:convergencejointlaw}
(\omega^\varepsilon,X^\varepsilon) \convd (\eta_0,\mathcal{X}^V(\eta))
\end{equation}
In particular, the limit is independent of the choice of mollifier $\rho \in \calS(\bbR)$ used in the natural approximation.
\end{theorem}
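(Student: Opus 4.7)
The strategy is tightness plus identification, bootstrapping from the environment convergence in Theorem~\ref{thm:env}. First, I establish tightness of $(\omega^\varepsilon, \eta^\varepsilon, B^\varepsilon, X^\varepsilon)$, where $B^\varepsilon$ is the driving Brownian motion of the $\varepsilon$-SDE. Tightness of $(\omega^\varepsilon, \eta^\varepsilon)$ is provided by Theorem~\ref{thm:env}, that of $B^\varepsilon$ is trivial, and for $X^\varepsilon$ I write $X^\varepsilon_t = B^\varepsilon_t - \beta \int_0^t f(\eta^\varepsilon_s)\,\dd s$ with $f(\omega) = \omega(0)$ and note that $f \in \vsob{-1}{0}$ by Assumption~\ref{ass:posdef}; Proposition~\ref{prp:tight} then yields $\bbE \big| \int_s^t f(\eta^\varepsilon_r)\,\dd r \big|^p \lesssim |t-s|^{p/2}$ uniformly in $\varepsilon$, whence Kolmogorov's criterion delivers tightness on $C([0,T],\bbR)$. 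Along any convergent subsequence we extract a limit $(\omega^0, \eta, \tilde B, X)$; by Theorem~\ref{thm:env}, $\omega^0 = \eta_0$ and $\eta \sim \text{ENV}(V,\beta,\pi_0)$, while $\tilde B$ is a Brownian motion as a weak limit of Brownian motions.

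Next, the additive functional must pass to the limit jointly. Approximating $f$ by cylinder functions $f_\delta(\omega) := \omega(\rho_\delta)$, with $f_\delta \to f$ in $\vsob{-1}{0}$ by dominated convergence under Assumption~\ref{ass:posdef}, continuous mapping handles $f_\delta$ for fixed $\delta$ while the uniform Itô trick controls the residual as $\delta \to 0$, giving the joint convergence $\int_0^\cdot f(\eta^\varepsilon_s)\,\dd s \to \int_0^\cdot f(\eta_s)\,\dd s$, with the right-hand side interpreted via Definition~\ref{def:i}. Hence $X = \tilde B - \beta \int_0^\cdot f(\eta_s)\,\dd s$, and it suffices to show $\tilde B = B$, where $B$ is the Brownian motion canonically built from $\eta$ in \eqref{eq:4}.

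For this identification, at the smooth level Itô's formula gives that $\tilde B^\varepsilon_t M^\varepsilon_t(\ell_\phi) - \int_0^t \eta^\varepsilon_s(\nabla\phi)\,\dd s$ is a martingale for any $\phi \in \calS$. After adding $M^\varepsilon(\ell_\phi)$ to the joint convergence---which requires $\int_0^\cdot \calL^\varepsilon \ell_\phi(\eta^\varepsilon_s)\,\dd s \to \int_0^\cdot \calL \ell_\phi(\eta_s)\,\dd s$, again obtained from mollification and the Itô trick as in the proof of Theorem~\ref{thm:exist}---and checking uniform $L^p$ bounds via BDG applied to the quadratic variations controlled through Proposition~\ref{prp:tight}, the martingale property passes to the limit: $\tilde B_t M_t(\ell_\phi) - \int_0^t \eta_s(\nabla\phi)\,\dd s$ is a martingale. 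This identifies $\langle \tilde B, M(\ell_\phi)\rangle_t = \int_0^t \eta_s(\nabla\phi)\,\dd s$; combining with \eqref{eq:4} and using $\law(\eta_s) \ll \pi$ (from incompressibility) to ensure $\eta_s(\nabla\phi) \neq 0$ for a.e.\ $s$, we obtain $\langle \tilde B, B\rangle_t = t$, hence $\langle \tilde B - B\rangle_t = 0$ and $\tilde B = B$. Therefore $X = \mathcal{X}^V(\eta)$ along the subsequence; since Theorem~\ref{thm:env} identifies $\eta$ uniquely, this upgrades to the full convergence \eqref{eq:convergencejointlaw} and automatically yields independence from the mollifier $\rho$.

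\textbf{Main obstacle.} The most delicate step is the identification $\tilde B = B$, as it requires passing to the limit in the quadratic covariation of two continuous martingales living on different regularity scales: $M^\varepsilon(\ell_\phi)$ depends on the singular generator $\calL^\varepsilon$ while $\tilde B^\varepsilon$ is smooth. The uniform Itô trick of Proposition~\ref{prp:tight} and the BDG bounds developed in Section~\ref{sec:existenceenergy} provide the uniform integrability needed to preserve the Itô correction under the weak limit.
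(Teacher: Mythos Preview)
Your argument is correct and takes a genuinely different route from the paper's. The paper proceeds by constructing, via Lemma~\ref{lem:abstractF}, a family of \emph{continuous} functionals $F^m:C([0,T],\calS'(\bbR))\to C_T$ with $\lVert X^\varepsilon-F^m(\eta^\varepsilon)\rVert_{C_T}\to 0$ in $L^1$ as $m\to\infty$, uniformly in $\varepsilon\geqslant 0$. Concretely, it writes $X^\varepsilon=B^\varepsilon-\beta\int_0^\cdot\eta^\varepsilon_s(0)\dd s$ and approximates each piece: the additive functional by $\int_0^\cdot\eta^\varepsilon_s(\rho_{1/m})\dd s$ (as you do), and the Brownian motion $B^\varepsilon=\int_0^\cdot(\eta^\varepsilon_s(\nabla\phi))^{-1}\dd M^\varepsilon_s(\ell_\phi)$ by three nested approximations --- smoothing $x\mapsto x^{-1}$ by a Schwartz function $p^m$, replacing the stochastic integral by a Riemann sum on a mesh $n=n(m)$, and replacing $M^\varepsilon(\ell_\phi)$ by a continuous functional $M^N(\eta^\varepsilon)$ via $\omega(0)\approx\omega(\rho_{1/N})$ --- so that $F^m$ is manifestly a continuous function of $\eta^\varepsilon$ alone and the continuous mapping theorem applies directly.

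Your approach bypasses the Riemann-sum and reciprocal-smoothing layers entirely: you carry $B^\varepsilon$ as an extra tight coordinate, extract a joint subsequential limit $(\eta,\tilde B)$, and then identify $\tilde B$ with the canonical $B$ of \eqref{eq:4} by passing the product martingale $B^\varepsilon_tM^\varepsilon_t(\ell_\phi)-\int_0^t\eta^\varepsilon_s(\nabla\phi)\dd s$ to the limit, reading off $\langle\tilde B,M(\ell_\phi)\rangle_t=\int_0^t\eta_s(\nabla\phi)\dd s$ and hence $\langle\tilde B-B\rangle\equiv 0$. This is arguably cleaner and more conceptual. Two small points you should make explicit when writing it out: (i) to interpret $\langle\tilde B,M(\ell_\phi)\rangle$ you need both $\tilde B$ and $M(\ell_\phi)$ to be martingales in the \emph{joint} filtration of $(\eta,\tilde B)$, which requires testing the $\varepsilon$-level martingale identities against bounded continuous functionals of $(\eta^\varepsilon,B^\varepsilon)|_{[0,s]}$ rather than of $\eta^\varepsilon$ alone (the argument of Theorem~\ref{thm:exist} goes through verbatim); (ii) the joint $C_T$-convergence $M^\varepsilon(\ell_\phi)\to M(\ell_\phi)$ needs one more word than ``as in Theorem~\ref{thm:exist}'' since there only convergence of tested expectations is shown --- but the same $g^N$ approximation from the paper's proof (or your $f_\delta$ device) upgrades this to joint convergence in $C_T$ via the uniform It\^o trick. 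Both routes ultimately rest on the same ingredient, namely Proposition~\ref{prp:tight}'s $\varepsilon$-uniform It\^o trick controlling $\calA^\varepsilon\ell_\phi$ and $\delta_0$ in $\esob{-1}{0}$.
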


We give a rough idea of the proof. We have proven in Theorem \ref{thm:env} that $\eta^\varepsilon \rightarrow \eta$. The result for the polymer can be derived from this fact, essentially because the polymer is a function of the environment, as in \eqref{Xfrometa}. If the polymer were a continuous bounded function of the environment, then we would be done. Unfortunately, the relationship is more complicated, so we must make an approximation argument.

The additive functional in \eqref{Xfrometa} is in fact the easiest term, and can be approximated by the continuous bounded function $\int_0^\cdummy \eta_t(\rho_{1/m}) \dd t$, where we continue to write $\rho_\kappa$ for the ``sharpened'' mollifier, $\rho_\kappa(x) = \kappa^{-1} \rho(\kappa^{-1} x)$. . The Brownian motion, on the other hand, is more delicate: it's definition, \eqref{eq:4}, involves the reciprocal function $x \mapsto x^{-1}$; a stochastic integral against the martingale $(M(\ell_\phi)_t)_{t\geqslant 0}$; and the martingale itself features a term $\int_0^t \calA \ell_\phi(\eta_s) \dd s$ which is not a continuous function of $(\eta_t)_{t\in[0,T]}$, but is given by Definition \ref{def:i}. We have to approximate each of these dependencies one-by-one to obtain a bounded continuous function $F^m : C([0, T], \mathcal{S}'(\bbR)) \rightarrow  C([0, T], \bbR)$ for which $X \approx F^m(\eta)$.

We begin with Lemma \ref{lem:abstractF}, which details the requirements of our approximation $(F^m : m \in \bbN)$. Then, all that remains is to spell out our specific choice of $F^m$, in which each discontinuous dependency has been replaced by a continuous approximation. Throughout the rest of this section, we write $C_T := C([0, T], \bbR)$.

\begin{lemma}\label{lem:abstractF}
Assume the hypothesis of Theorem \ref{thm:Xconvergence}. It holds that the natural approximation $(X^\varepsilon : \varepsilon \in (0, 1))$ is tight. Assume further that for each $T \geqslant 0$, there exists a family $\{ F^m \}_{m \in \bbN}$ of continuous functions $F^m : C([0, T], \mathcal{S}'(\bbR)) \rightarrow C_T$, such that
\begin{equation}\label{approxfamily}
\lim_{m \rightarrow \infty} \big( \limsup_{\varepsilon \rightarrow 0} \bbE[\| X^{\varepsilon} - F^m(\eta^{\varepsilon}) \|_{C_T}] + \bbE[\| X - F^m(\eta) \|_{C_T}] \big) = 0
\end{equation}
Then the conclusion of Theorem \ref{thm:Xconvergence} holds.
\end{lemma}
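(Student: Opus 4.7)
The plan is to combine the already-established convergence $\eta^\varepsilon \convd \eta$ from Theorem \ref{thm:env} with the hypothesised family $\{F^m\}$ to identify every subsequential limit of $X^\varepsilon$ as $\mathcal{X}^V(\eta)$.

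First I would verify tightness of $(X^\varepsilon : \varepsilon \in (0,1))$ in $C_T$ via Kolmogorov's criterion. The smooth-setting identity \eqref{eq:1} gives $X^\varepsilon_t = B_t - \beta \int_0^t f(\eta^\varepsilon_s) \dd s$ with $f(\omega) = \omega(0)$. Since $\rho$ is positive definite with $\rhoHat(0) = 1$ one has $|\rhoHat| \leqslant 1$, hence $\VHat^\varepsilon \leqslant \VHat$ pointwise, so
\begin{equation*}
\lVert f \rVert_{\esob{-1}{0}}^2 = \int \frac{\VHat^\varepsilon(p)}{1+\tfrac{1}{2}|p|^2} \dd p \leqslant \int \frac{\VHat(p)}{1+\tfrac{1}{2}|p|^2} \dd p < \infty
\end{equation*}
is uniform in $\varepsilon$ by Assumption \ref{ass:posdef}. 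A density argument, first approximating $\delta_0$ by the mollifier $\rho_\kappa$ and then the resulting linear functional by Schwartz-cylinder cutoffs $\chi_n(\omega(\rho_\kappa))$ with $\chi_n(x) = x$ on $[-n, n]$, extends the Itô trick of Proposition \ref{prp:tight} to $f$ and yields $\bbE[|\int_s^t f(\eta^\varepsilon_r) \dd r|^p] \lesssim_p (t-s)^{p/2}$ uniformly in $\varepsilon$. Combined with the Brownian increment bound this gives $\bbE[|X^\varepsilon_t - X^\varepsilon_s|^p] \lesssim (t-s)^{p/2}$, and Kolmogorov (for $p$ large) delivers tightness.

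Next I would pass to a subsequential limit. The joint family $(\omega^\varepsilon, \eta^\varepsilon, X^\varepsilon)$ is tight on $\calS'(\bbR) \times C([0,T], \calS'(\bbR)) \times C_T$. By Skorokhod representation, pass along a subsequence to an a.s.\@ convergent coupling $(\omega^\varepsilon, \eta^\varepsilon, X^\varepsilon) \to (\omega^*, \eta^*, Y)$, where $\omega^* = \eta^*_0$ is preserved in the limit and $(\omega^*, \eta^*)$ has the law of $(\omega, \eta)$ by Theorem \ref{thm:env}. Continuity of $F^m$ gives $F^m(\eta^\varepsilon) \to F^m(\eta^*)$ in $C_T$ a.s., so $\|X^\varepsilon - F^m(\eta^\varepsilon)\|_{C_T} \to \|Y - F^m(\eta^*)\|_{C_T}$ a.s. Fatou and \eqref{approxfamily} then yield $\bbE[\|Y - F^m(\eta^*)\|_{C_T}] \to 0$ as $m \to \infty$. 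Since $\mathcal{X}^V$ is an intrinsic measurable functional on the filtered path space of $\eta$ (its prescription \eqref{Xfrometa}--\eqref{eq:4} is canonical), the joint law of $(\eta^*, \mathcal{X}^V(\eta^*))$ coincides with that of $(\eta, \mathcal{X}^V(\eta))$, and \eqref{approxfamily} also gives $\bbE[\|\mathcal{X}^V(\eta^*) - F^m(\eta^*)\|_{C_T}] \to 0$. A triangle inequality then forces $Y = \mathcal{X}^V(\eta^*)$ a.s., so every subsequential limit of $(\omega^\varepsilon, X^\varepsilon)$ agrees in distribution with $(\eta_0, \mathcal{X}^V(\eta))$, and the full sequence converges to \eqref{eq:convergencejointlaw}.

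The main obstacle is the last identification. One must justify that the stochastic integral \eqref{eq:4}, which depends on the natural filtration of $\eta$, produces a functional whose distribution is determined by the law of $\eta$ alone. Although stochastic integrals are not defined pathwise, the recipe \eqref{Xfrometa}--\eqref{eq:4} is adapted and canonical on any probability space supporting an energy solution, so pushing it forward under equality in law is legitimate. All the other steps are routine applications of tightness, continuous mapping, Fatou, and the Itô trick.
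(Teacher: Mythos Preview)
Your proof is correct and takes a genuinely different route from the paper. The tightness argument is essentially identical. For the convergence, the paper argues directly at the level of bounded continuous test functions $f:C_T\to\bbR$: it writes the ``$\varepsilon/3$'' decomposition
\[
|\bbE[f(X^\varepsilon)]-\bbE[f(X)]|\leqslant \bbE[|f(X^\varepsilon)-f(F^m(\eta^\varepsilon))|]+|\bbE[f(F^m(\eta^\varepsilon))]-\bbE[f(F^m(\eta))]|+\bbE[|f(F^m(\eta))-f(X)|],
\]
kills the middle term via $\eta^\varepsilon\convd\eta$ and continuity of $F^m$, and controls the outer two by exploiting tightness of $X^\varepsilon$ to restrict to a compact $K\subset C_T$ on which $f$ is uniformly continuous, so that \eqref{approxfamily} (which gives closeness only in the $C_T$-norm, not in the observable $f$) can be converted into smallness of $\bbE[|f(X^\varepsilon)-f(F^m(\eta^\varepsilon))|]$.

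Your route via Skorokhod representation, continuous mapping for $F^m$, and Fatou is cleaner: it identifies the limit point directly rather than going through test functions, and it handles the joint law $(\omega^\varepsilon,X^\varepsilon)$ more transparently. The price is twofold. First, Skorokhod on $C([0,T],\calS'(\bbR))$ needs a word of justification (one typically embeds into a Polish space such as $C([0,T],H^{-s}_\gamma)$ where tightness already lives). Second, and more substantially, you must argue that $\mathcal{X}^V$ is law-determined, i.e.\ that the stochastic integral \eqref{eq:4} has a joint law with $\eta$ depending only on $\law(\eta)$; you flag this correctly and your justification (adapted Riemann-sum approximation) is sound. The paper's approach sidesteps both points entirely, never invoking Skorokhod or the law-invariance of $\mathcal{X}^V$, at the cost of the slightly fiddly uniform-continuity-on-compacts step.
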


\begin{proof}
Tightness follows a standard route. Proposition \ref{prp:tight} tells us that the Itô trick holds uniformly in $\varepsilon \in (0, 1)$, in which case for $p \geqslant 1$, and $T\geqslant 0$, we have a constant $C = C(p, T)$ such that uniformly in $\varepsilon > 0$, $s, t \in [0, T]$, it holds
\[ \bbE [| X^{\varepsilon}_t - X^{\varepsilon}_s |^p] \lesssim_p \bbE \Big[ \Big| \int_s^t \eta^{\varepsilon}_r (0) \dd r \Big|^p \Big] +\bbE [| B^{\varepsilon}_t - B^{\varepsilon}_s |^p] \leqslant C | t - s |^{p / 2}, \]
for any $p \geqslant 2$.

Next we turn to proving the conclusion of Theorem \ref{thm:Xconvergence} conditional on the existence of $\{ F^m \}_{m \in \bbN}$. Fix $T>0$ and a continuous bounded function $f : C([0, T],\bbR) \rightarrow \bbR$. Consider the decomposition for fixed $m \in \bbN$
\begin{equation}\label{epsilonthird}
\begin{split}
| \bbE[f(X^{\varepsilon})] - \bbE[f(X)] | &\leqslant \bbE[| f(X^{\varepsilon}) - f(F^m(\eta^{\varepsilon})) |] + | \bbE[f(F^m(\eta))] - \bbE[f(F^m(\eta^{\varepsilon}))] | \\
&\qquad + \bbE[| f(F^m(\eta)) - f(X) |].
\end{split}
\end{equation}
We wish to show $\limsup_{\varepsilon \rightarrow 0}| \bbE[f(X^{\varepsilon})] - \bbE[f(X)] | = 0$. To obtain this, we shall take limsup as $\varepsilon \rightarrow 0$ in \eqref{epsilonthird}. The middle term on the RHS of \eqref{epsilonthird} vanishes in this limit due to the convergence $\eta^\varepsilon \convd \eta$. The first and last terms on the RHS do not vanish in this limit, but they vanish upon further taking $m \rightarrow \infty$. In detail, we claim
\begin{align}
\lim_{m \rightarrow \infty} \big( \limsup_{\varepsilon \rightarrow 0} \bbE[| f(X^{\varepsilon}) - f(F^m(\eta^{\varepsilon})) |] + \bbE[| f(X) - f(F^m(\eta)) |]\big) = 0 \label{eq:39}
\end{align}

This claim follows from the property \eqref{approxfamily}. We prove the claim only for the first term, as the second term is similar. Fix $\delta > 0$. Tightness gives a compact subset $K \subset C_T$ such that $\sup_{\varepsilon \in (0, 1)} \bbP (X^\varepsilon \notin K) < \delta$. In particular, we have uniformly among $\varepsilon \in (0, 1)$ that
\begin{align*}
\bbE[| f(X^{\varepsilon}) - f (F^m(\eta^{\varepsilon})) |] \leqslant \delta\| f \|_{\infty} + \bbE[ | f(X^{\varepsilon}) - f (F^m(\eta^\varepsilon)) | \ic_{\{ X^{\varepsilon} \in K \}} ]
\end{align*}

It holds that $f : C_T \rightarrow \bbR$ is uniformly continuous upon being restricted to the compact subset $K \subset C_T$. In fact, we have the slightly stronger property which allows points $y \in C_T$ that are slightly outside of $K$. For any $\delta' > 0$, there exists $\kappa = \kappa(\delta', K)$ such that $|f(y_1) - f(y_2)| < \delta'$ for any $y_1, y_2 \in C_T$ satisfying $\| y_1 - y_2 \|_{C_T} < \kappa$, provided that $y_i \in K$ for at least one of $i \in \{1, 2\}$.

Using this property with $\delta' = \delta$, we continue our estimate to find
\begin{align*}
\bbE[| f(X^{\varepsilon}) - f (F^m(\eta^{\varepsilon})) |] &\leqslant \delta (\| f \|_{\infty} + 1) + 2 \| f \|_{\infty} \bbP(\|X^{\varepsilon} - F^m(\eta^{\varepsilon})\|_{C_T} > \kappa(\delta) ) \\
&\leqslant \delta (\| f \|_{\infty} + 1) + 2 \| f \|_{\infty} \kappa(\delta)^{-1} \bbE[\|X^{\varepsilon} - F^m(\eta^{\varepsilon})\|_{C_T}]
\end{align*}
In particular, \eqref{eq:39} follows from invoking \eqref{approxfamily}, and the fact that $\delta > 0$ is arbitrary.
\end{proof}

\begin{proof}[Proof of Theorem \ref{thm:Xconvergence}]
Thanks to Lemma \ref{lem:abstractF}, it suffices to construct a family $F^m$ satisfying \eqref{approxfamily}. Both terms in \eqref{approxfamily} are handled with the same argument. To allow us to present both cases with the same notation, we write $\eta^0, X^0, V^0$, in place of $\eta, X, V$. For all $\varepsilon \geqslant 0$, it holds that $X^\varepsilon$ is given according to \eqref{Xfrometa}: in the case $\varepsilon > 0$, this is an identity, whereas in the case $\varepsilon = 0$, it is a definition.

The additive functional term in \eqref{Xfrometa} can easily be approximated by a continuous function of the environment $(\eta^\varepsilon_t)_{t \in [0, T]}$. Indeed, consider $F^m_0 : C([0, T], \mathcal{S}'(\bbR)) \rightarrow C_T$ given by $F^m_0(\eta) = \int_0^{\cdummy} \eta_s(\rho_{1 / m}) \dd s$. The corresponding error in this approximation is controlled by the Itô trick, which holds for $\varepsilon > 0$ thanks to Proposition \ref{prp:tight} and for $\varepsilon = 0$ thanks to the definition of energy solution. Applying the Itô trick yields
\begin{align*}
\bbE \big[ \big\Vert \int_0^{\cdummy} \eta^\varepsilon_s(0) \dd s - \int_0^{\cdummy} \eta^\varepsilon_s(\rho_{1 / m})  \dd s  \big\rVert_{C_T} \big] &\lesssim_T \| \ell_\delta - \ell_{\rho_{1 / m}} \|_{\esob{-1}{0}}  \\
&\lesssim \int_{\bbR} \frac{\VHat(k)\rhoHat^2(\varepsilon k) (1 - \rhoHat(k / m))^2}{1 + |k|^2} \dd k
\end{align*}
In particular, thanks to Assumption \ref{ass:posdef}, it holds
\begin{equation}\label{eq:66}
\lim_{m \rightarrow \infty} \Big( \limsup_{\varepsilon \rightarrow 0} \bbE \Big[ \Big\Vert \int_0^{\cdummy} \eta^\varepsilon_s(0) \dd s - F^m_0(\eta^\varepsilon)  \Big\rVert_{C_T} \Big] + \bbE \Big[ \Big\Vert \int_0^{\cdummy} \eta_s(0) \dd s - F^m_0(\eta)  \Big\rVert_{C_T} \Big]\Big) = 0
\end{equation}

Now we approximate the Brownian motion in \eqref{Xfrometa} as a continuous function of $(\eta^\varepsilon_t)_{t \in [0, T]}$. We do not assume that the natural approximation is constructed on the same probability space, uniformly among $\varepsilon > 0$, in which case the Brownian motion appearing in \eqref{Xfrometa} is $\varepsilon$-dependent, and we write $(B^\varepsilon_t)_{t \geqslant 0}$ to emphasise this. Fix $\phi \in \calS \setminus\{0\}$, then for any $\varepsilon > 0$, it is a consequence of Lemma \ref{lem:2} that
\begin{equation}\label{eq:65}
B^\varepsilon_t = \int_0^t \frac{\ic\{\eta_s^\varepsilon(\nabla \phi)\neq 0\}}{\eta_s^\varepsilon(\nabla\phi)} \dd M_s^\varepsilon
\end{equation}
in which $(M^\varepsilon_t)_{t\geqslant 0}$ is the Dynkin martingale
\begin{equation}\label{eq:64}
M^\varepsilon_t = \eta^\varepsilon_t(\phi) - \eta^\varepsilon_0(\phi) - \thalf \int_0^t \eta^\varepsilon(\Delta \phi) \dd s - \int_0^t \calA^\varepsilon \ell_\phi(\eta^\varepsilon_s) \dd s
\end{equation}
Upon setting $B^0 := B$ for the Brownian motion as defined in \eqref{eq:4}, and $M^0 = M(\ell_\phi)$ for the Dynkin martingale given to us by the the notion of energy solution, we have that \eqref{eq:65} and \eqref{eq:64} also hold in the case $\varepsilon = 0$.

As described after the statement of the theorem, \eqref{eq:65} has within it three different instances of discontinuous dependence upon the environment $\eta^\varepsilon$. We take each one of these in turn, performing an appropriate substitution in each case, until we end up with a continuous function $F^m_2 : C([0, T], \mathcal{S}'(\bbR)) \rightarrow C_T$ so that $F^m_2(\eta^\varepsilon)$ will serve as a good approximation of $(B^\varepsilon_t)_{t\geqslant 0}$.

We begin by replacing the reciprocal function $x \mapsto x^{-1}$. For $m \in \bbN$, let $p^m \in \mathcal{S} (\bbR)$ be such that $p^m(x) \uparrow x^{- 1}$ on $(0, 1]$, $p^m (x) \downarrow x^{- 1}$ on $[- 1, 0)$ and $p^m \rightarrow (x \mapsto x^{- 1})$ locally uniformly in $\bbR \backslash [- 1, 1]$. For $\varepsilon \geqslant 0$, define $B^{\varepsilon, m}_t := \int_0^{\cdot} p^m (\eta_s^\varepsilon(\nabla \phi)) \dd M^\varepsilon_s$. We compute the approximation error for $\varepsilon \geqslant 0$: writing $\bbE_{\pi^\varepsilon}$ for the measure in which $\eta^{\varepsilon}$ is in the stationary regime, $\eta^\varepsilon_0 \sim \pi(V^\varepsilon)$, it holds
\begin{align*}
\bbE [ \lVert B^{\varepsilon, m} - B^{\varepsilon} \rVert_{C_T}^2] &\lesssim \int_0^T \bbE \Big[ \Big| p^m(\eta_s^{\varepsilon}(\nabla \phi)) - \frac{\ic\{\eta_s^\varepsilon(\nabla \phi) \neq 0\}}{\eta_s^\varepsilon(\nabla\phi)} \Big|^2 \eta_s^{\varepsilon} (\nabla \phi)^2 \Big] \dd s \\
&\lesssim \Big\lVert \frac{\dd \pi^\varepsilon_0}{\dd \pi^\varepsilon} \Big\rVert_{\efock} \int_0^T \bbE_{\pi^\varepsilon} \Big[ \Big|p^m(\eta_s^{\varepsilon}(\nabla \phi)) - \frac{\ic\{\eta_s^\varepsilon(\nabla \phi) \neq 0\}}{\eta_s^\varepsilon(\nabla\phi)} \Big|^4 \eta_s^{\varepsilon} (\nabla \phi)^4 \Big]^{1/2} \dd s \\
&\leqslant T \Big\lVert \frac{\dd \pi^\varepsilon_0}{\dd \pi^\varepsilon} \Big\rVert_{\vfock} \bbE[ |Z p^m(Z) - 1|^4 : Z \sim N(0, \sigma_\varepsilon^2) ]^{1/2}
\end{align*}
in which we define $\sigma^2_\varepsilon := \lVert \ell_{\nabla\phi} \rVert_{\efock}^2$. We observe that
$$ \limsup_{\varepsilon \rightarrow 0} \bbE[ |Z p^m(Z) - 1|^4 : Z \sim N(0, \sigma_\varepsilon^2) ] = \bbE[ |Z p^m(Z) - 1|^4 : Z \sim N(0, \sigma_0^2) ] \overset{m \rightarrow \infty}{\longrightarrow} 0 $$
Combining the above two displays, along with the uniform bound \eqref{eq:21}, we conclude that
\begin{equation}\label{eq:67}
\lim_{m \rightarrow \infty} \big( \limsup_{\varepsilon \rightarrow 0} \bbE[\lVert B^{\varepsilon, m} - B^{\varepsilon} \rVert_{C_T} ] + \bbE[\lVert B^{0, m} - B^{0} \rVert_{C_T} ]\big) = 0
\end{equation}

Next, we replace the stochastic integral appearing in the definition of $B^{\varepsilon, m}$ by a Riemann sum. For $n \in \bbN$, $s \in [0, T]$, define $[s]^n = i T / n$ for the value $i \in 1{:}n$ that satisfies $i T / n \leqslant s < (i +1) T / n$. Define the process
\begin{equation*}
\begin{split}
B^{\varepsilon, m, n}_t &:= \int_0^t p^m (\eta_{[s]^n}^{\varepsilon}(\nabla \phi)) \dd M^\varepsilon_s \\
&= \sum_{i = 1}^n p^m (\eta_{t \wedge i T / n}^{\varepsilon} (\nabla \phi)) (M^{\varepsilon}_{t \wedge (i + 1) T / n} - M^{\varepsilon}_{t \wedge i T / n})
\end{split}
\end{equation*}
We compute the following approximation error for $\varepsilon \geqslant 0$: using Cauchy-Schwarz twice, and the control $(p^m(x) - p^m(y))^4 \lesssim \| p^m \|_{C^1}^4 (x-y)^4$, we obtain
\begin{align}
\bbE [\lVert B^{\varepsilon, m} - B^{\varepsilon, m, n}\rVert_{C_T}^2] &\leqslant \int_0^T \bbE[(p^m(\eta_s^{\varepsilon}(\nabla \phi)) - p^m(\eta_{[s]^n}^{\varepsilon}(\nabla \phi)))^2 \eta_s^{\varepsilon}(\nabla \phi)^2] \dd s \nonumber \\
&\leqslant \int_0^T \bbE[\eta_s^{\varepsilon}(\nabla \phi)^4]^{1/2} \bbE[(p^m(\eta_s^{\varepsilon}(\nabla \phi)) - p^m(\eta_{[s]^n}^{\varepsilon}(\nabla \phi)))^4]^{1/2} \dd s \nonumber \\
&\lesssim T \| p^m \|_{C^1}^2 \Big\lVert\frac{\dd \pi^\varepsilon_0}{\dd \pi^\varepsilon} \Big\rVert_{\efock}^{1/2} \sigma_\varepsilon^2 \sup_{s \in [0, T]} \bbE[|\eta_s^{\varepsilon}(\nabla \phi) - \eta_{[s]^n}^{\varepsilon}(\nabla \phi)|^4]^{1/2} \label{eq:68}
\end{align}
For the last term, we use Proposition \ref{prp:tight} in the case $p = 4$: there exists a constant $C = C(T, V, \beta, \pi_0, \rho)$ such that uniformly among $\varepsilon \in [0, 1)$ it holds
$$ \bbE[|\eta_s^{\varepsilon}(\nabla \phi) - \eta_{[s]^n}^{\varepsilon}(\nabla \phi)|^4] \leqslant C |s - [s]^n|^2 $$
Moreover, we have the following uniform control for $\varepsilon \in [0, 1)$:
$$ \lVert \dd \pi^\varepsilon_0/\dd \pi^\varepsilon\rVert_{\efock}, \sigma_\varepsilon^2, \lVert\ell_{\nabla\phi}\rVert_{\efock} \lesssim_{V, \phi, \pi_0} 1 $$
Plugging these last two displays into \eqref{eq:68}, and using $|s - [s]^n|\lesssim_T n^{-1}$, we conclude that there is a constant $C = C(T, V, \beta, \pi_0, \rho)$ such that
\begin{align}
\limsup_{\varepsilon \rightarrow 0} \big( \bbE[\lVert B^{\varepsilon, m} - B^{\varepsilon, m, n} \rVert_{C_T} ] + \bbE[\lVert B^{0, m} - B^{0, m, n} \rVert_{C_T}] \big) \leqslant \, C n^{-1/2} \| p^m \|_{C^1} \overset{m \rightarrow \infty}{\longrightarrow} 0 \label{eq:75}
\end{align}
provided that we choose $n = n(m)$ going to infinity fast enough with $m \in \bbN$.

Finally, we replace the martingale $M^\varepsilon$ appearing in the Riemann sum, with an approximation that is a continuous function of $(\eta^\varepsilon)_{t \in [0, T]}$. The problem being that in the case $\varepsilon = 0$, the final term of \eqref{eq:64} is given by Definition \ref{def:i}, and it is not a continuous function of $(\eta^0_t)_{t \in [0, T]}$. To that end, define the continuous function $M^N : C([0, T], \mathcal{S}'(\bbR)) \rightarrow C_T$ for $N \in \bbN$ according to
$$ M^N(\eta) := \eta_\cdummy(\phi) - \eta_0(\phi) - \thalf \int_0^\cdummy \eta(\Delta \phi) \dd s - \int_0^\cdummy g^N(\eta_s) \dd s $$
in which $g^N(\eta) := \eta(\nabla\phi) \eta(\rho_{1/N}) - V(\nabla \phi)$. Define the continuous function $F^{m, n, N} : C([0, T], \calS'(\bbR)) \rightarrow C_T$ according to
$$ F^{m, n, N}(\eta) = \sum_{i = 1}^n p^m (\eta_{\cdummy \wedge i T / n}^{\varepsilon} (\nabla \phi)) (M^{N}_{\cdummy \wedge (i + 1) T / n}(\eta) - M^{N}_{\cdummy \wedge i T / n}(\eta)) $$
In other words, $F^{m, n, N}(\eta^\varepsilon)$ coincides with the Riemann sum $B^{\varepsilon, m, n}$, only with $M^N(\eta^\varepsilon)$ in place of the Dynkin martingale $M^\varepsilon$. We compute the approximation error for $\varepsilon \geqslant 0$:
\begin{align*}
\bbE [ \lVert B^{\varepsilon, m, n} - F^{m, n, N}(\eta^\varepsilon)\rVert_{C_T} ] & \lesssim n \| p^m \|_{L^\infty(\bbR)} \bbE[ \lVert M^\varepsilon - M^N(\eta^\varepsilon) \rVert_{C_T} ]\\
& = n \| p^m \|_{L^\infty(\bbR)} \bbE[ \lVert \int_0^\cdummy (\mathcal{A}^{\varepsilon} \ell_{\phi}(\eta^\varepsilon_s) - g^N(\eta^\varepsilon_s)) \dd s \rVert_{C_T} ] \\
& \lesssim_T n \| p^m \|_{L^\infty(\bbR)} \| \mathcal{A}^{\varepsilon} \ell_{\phi} - g^N \|_{\esob{-1}{0}}
\end{align*}
where in the last line we used the Itô trick for $p = 1$, which holds uniformly in $\varepsilon \geqslant 0$ thanks to Proposition \ref{prp:tight}.

We decompose $g^N = g^N_0 + g^N_2$ into its chaos components:
$$ g^N_2(\eta^\varepsilon) = \eta^\varepsilon(\nabla\phi) \eta^\varepsilon(\rho_{1/N}) - \langle\ell_{\nabla \phi}, \ell_{\rho_{1/N}}\rangle_{\efock},\quad g^N_0 = \langle\ell_{\nabla \phi}, \ell_{\rho_{1/N}}\rangle_{\efock} - V(\nabla \phi) $$
Note that while $g^N$ does not depend on $\varepsilon \geqslant 0$, the decomposition is $\varepsilon$-dependent. By orthogonality:
\begin{equation}\label{eq:72}
\| \mathcal{A}^{\varepsilon} \ell_{\phi} - g^N \|_{\esob{-1}{0}}^2 = \| \mathcal{A}^{\varepsilon} \ell_{\phi} - g^N_2 \|_{\esob{-1}{0}}^2 + |g^N_0|^2
\end{equation}

For the first term in \eqref{eq:72}, we have
\begin{align*}
\| \mathcal{A}^{\varepsilon} \ell_{\phi} - g^N_2 \|_{\esob{-1}{0}}^2 &\lesssim \int \int \VHat^\varepsilon(p) \VHat^\varepsilon(q) \frac{ |p \phiHat(p)|^2 |\rhoHat(N^{-1} q) - 1|^2 }{1 + \thalf |p+q|^2} \dd p \dd q \\
&\lesssim_\rho \int \int \VHat(p) \VHat(q) \frac{ |p \phiHat(p)|^2 |\rhoHat(N^{-1} q) - 1|^2 }{1 + \thalf |p+q|^2} \dd p \dd q
\end{align*}
where we use $\VHat^\varepsilon(p) = \rhoHat(\varepsilon p)^2 \VHat(p)$. We split this integral into regions $R_1 = |p+q| < N^{1/2}$ and $R_2 = |p+q| \geqslant N^{1/2}$. The contribution from $R_1$ is bounded above by
\begin{align*}
\lVert\rhoHat\rVert_{C^1}^2 \int_{\bbR}  \VHat(p) |p \phiHat(p)|^2 &\big( \int_{|q| \leqslant |p| + N^{1/2}} \VHat(q) N^{-2} |q|^2 \dd q \big)  \dd p \\
&\lesssim_{\rho, V}  N^{-2} \int_{\bbR} \VHat(p) |p \phiHat(p)|^2  (|p| + N^{1/2})^3  \dd p \lesssim_{V, \phi} N^{-1/2}
\end{align*} 
The contribution from $R_2$ is bounded above by
\begin{align*}
\lVert\rhoHat\rVert_{L^\infty(\bbR)}^2\int \VHat(p) |p \phiHat(p)|^2 \big( \int_{|p+q| \geqslant N^{1/2}}  \frac{\VHat(q)}{1 + \thalf |p+q|^2} \dd q \big) \dd p  \lesssim_{\rho, V, \phi} N^{-1/2}
\end{align*}
For the second term in \eqref{eq:72}, we have
$$ |g^N_0|^2 \lesssim | \langle \ell_{\nabla\phi}, \ell_{\rho_{1/N}} - \ell_{\delta} \rangle_{\efock}|^2 + |(V^\varepsilon-V)(\nabla\phi)|^2 $$
The second term goes to zero as $\varepsilon \rightarrow 0$, and equals zero in the case $\varepsilon = 0$. Moreover, we also have
\begin{align*}
|\langle \ell_{\nabla\phi}, \ell_{\rho_{1/N}} - \ell_{\delta} \rangle_{\efock}| &\lesssim \lVert\rhoHat\rVert_{L^\infty(\bbR)}^2 \int \VHat(p) |p\phiHat(p)| |\rhoHat(N^{-1} p) - 1| \dd p  \\
&\lesssim_{\rho, V, \phi} N^{-1}
\end{align*}
Plugging this in, we conclude that there is a constant $C = C(T, \rho, V, \phi)$ such that
\begin{align}
\limsup_{\varepsilon \rightarrow 0} \bbE [ \lVert B^{\varepsilon, m, n} - F^{m, n, N}(\eta^\varepsilon)\rVert_{C_T} ] + \bbE [ \lVert B^{0, m, n} - F^{m, n, N}(\eta^0)\rVert_{C_T} ] &\leqslant C N^{-1/2} n \| p^m \|_{L^\infty(\bbR)} \nonumber \\
&\overset{m \rightarrow \infty}{\longrightarrow} 0 \label{eq:76}
\end{align}
provided that we also choose $N = N(m)$ going to infinity fast enough with $m \in \bbN$.

To approximate both the additive functional and the Brownian motion, we consider $F^m$ defined according to $F^m = F^m_0 + F^{m, n(m), N(m)}$. It remains to verify \eqref{approxfamily}. Consider the decomposition
\begin{align*}
X^\varepsilon - F^m(\eta^\varepsilon) = & \big( \int_0^t \eta^{\varepsilon}_s (0) \dd s - F_0^\varepsilon(\eta^\varepsilon) \big) + (B^{\varepsilon} - B^{\varepsilon, m}) \\
&\, + (B^{\varepsilon, m} - B^{\varepsilon, m, n}) + (B^{\varepsilon, m, n} - F^{m, n, N}(\eta^\varepsilon))
\end{align*}
The result follows from the triangle inequality, and invoking \eqref{eq:66}, \eqref{eq:67}, \eqref{eq:75}, and \eqref{eq:76}.
\end{proof}

\subsection{Characterisation of the SRBP}\label{sec:char-srbp}
\begin{definition}\label{defenergysolutionsX}
Let $V \in \calS'(\bbR)$ satisfy Assumption \ref{ass:posdef} and let $\pi_0 \ll \pi(V)$ be such that $\dd \pi_0 / \dd \pi \in L^2(V)$. Let $(\omega,X)$ be an $\mathcal{S}'(\bbR)\times C(\bbR_+,\bbR)$-valued random variable such that $\omega\sim\pi_0$ and $X_0=0$. The pair $(\omega,X)$ is called an \textit{energy solution} to the SDE \eqref{eq:3} if the continuous $\mathcal{S}'(\bbR)$-valued process
\[\eta=\mathcal{E}^V(\omega,X),\]
defined, in the weak sense, by
\begin{equation}\label{eq:etaFromX}
\eta_t=\omega(X_t+\cdot)+\beta\int_0^t\nabla V(X_t-X_s+\cdot)\dd s,
\end{equation}
is an energy solution with respect to $V$ and $\mathcal{L}$ in the sense of Definition \ref{def:energysolution}.
\end{definition}
The following result characterises the process $X=(\eta_0,\mathcal{X}^V(\eta))$ from Subsection \ref{subsec:constructionX} as the unique energy solution to \eqref{eq:3}.
\begin{theorem}\label{wellpsoednessenergysolutionsX}
Let $V \in \calS'(\bbR)$ satisfy Assumption \ref{ass:posdef} and suppose that there exist $\alpha \in (-1, \infty),\alpha^\ast\in (-\infty,\infty)$ such that $\alpha^\ast<2+\alpha$ and
\begin{equation}\label{largescalebehaviourwellposed}
0 < \liminf_{p \rightarrow 0} |p|^{-\alpha^\ast} \VHat(p) \leqslant \limsup_{p \rightarrow 0} |p|^{-\alpha} \VHat(p) < \infty .
\end{equation}
Let $\pi_0 \ll \pi(V)$ be such that $\dd \pi_0 / \dd \pi \in L^2(V)$. Then, there exists a unique (in law) energy solution $(\omega,X)$ to \eqref{eq:3} such that $\omega\sim \pi_0$. In fact, it holds
\[ (\omega,X)\sim (\eta_0,\mathcal{X}^V(\eta)), \]
where $\eta\sim\text{ENV}(V,\beta,\pi_0)$.
\end{theorem}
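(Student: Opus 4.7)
The strategy is to view $\mathcal{E}^V$ from Definition \ref{defenergysolutionsX} and the composite $\eta \mapsto (\eta_0, \mathcal{X}^V(\eta))$ from Definition \ref{defconstructionX} as mutual inverses on the class of energy solutions, so as to transfer the problem to the environment, for which well-posedness is given by Theorem \ref{thm:env}. I will prove existence via the first direction and uniqueness via the second.

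For existence, given $\eta \sim \text{ENV}(V, \beta, \pi_0)$ (supplied by Theorem \ref{thm:env}), set $\omega := \eta_0$ and $X := \mathcal{X}^V(\eta)$. By Definition \ref{defenergysolutionsX}, $(\omega, X)$ is an energy solution to \eqref{eq:3} precisely when $\mathcal{E}^V(\omega, X) = \eta$, and I would establish this identity by approximation. In the mollified setting, the pathwise identity $\mathcal{E}^{V^\varepsilon}(\omega^\varepsilon, X^\varepsilon) = \eta^\varepsilon$ is a direct consequence of It\^o's formula, as already used in Section \ref{sec:mollified-srbp}. The joint convergence $(\omega^\varepsilon, X^\varepsilon) \to (\eta_0, X)$ from Theorem \ref{thm:Xconvergence}, together with $\eta^\varepsilon \to \eta$ from Theorem \ref{thm:env}, then yields the identity in the limit, provided one can pass to the limit in the nonlinear drift $\beta \int_0^t \nabla V^\varepsilon(X^\varepsilon_t - X^\varepsilon_s + \phi) \, ds$ tested against a Schwartz function $\phi$. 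The infrared hypothesis \eqref{largescalebehaviourwellposed}, and specifically $\alpha^\ast < 2 + \alpha$, controls the low-frequency behaviour of $\hat V$ in such a way that the relevant Fourier-side integrals are dominated uniformly in $\varepsilon$, and dominated convergence finishes the argument.

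For uniqueness, take any energy solution $(\omega, X)$ with $\omega \sim \pi_0$. By Definition \ref{defenergysolutionsX}, $\eta := \mathcal{E}^V(\omega, X)$ is an energy solution of the SPDE, so by Theorem \ref{thm:env} it holds $\eta \sim \text{ENV}(V, \beta, \pi_0)$, and evaluating \eqref{eq:etaFromX} at $t = 0$ gives $\omega = \eta_0$ almost surely. It remains to show $X = \mathcal{X}^V(\eta)$ almost surely. Define
\[ Y_t := X_t + \beta \int_0^t \eta_s(0) \, ds, \]
where the additive functional is well-defined via Definition \ref{def:i}, noting that the functional $f(\omega) = \omega(0)$ lies in $\vsob{-1}{0}$ under Assumption \ref{ass:posdef}. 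To identify $Y$ with the canonical Brownian motion $B$ extracted from $\eta$ via \eqref{eq:4}, it suffices, by the defining formula for $B$, to show that $Y$ is a continuous local martingale satisfying $\int_0^t \eta_s(\nabla \phi) \, dY_s = M_t(\ell_\phi)$ for every $\phi \in \mathcal{S}(\bbR)$. Once this is granted, $X_t = B_t - \beta \int_0^t \eta_s(0) \, ds = \mathcal{X}^V(\eta)_t$, completing the proof.

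The main obstacle will be proving this stochastic-integral identity without access to pathwise It\^o calculus on $\eta$, whose trajectories are merely distributions. My plan is to argue by mollification: for a mollified test function $\phi_m$, the cylinder martingale problem from Theorem \ref{thm:etoc}, applied to polynomial cylinder functionals of $\eta$ combined with the explicit formula \eqref{eq:etaFromX}, allows one to write down a mollified decomposition in which the $Y$-integral appears exactly as predicted, up to remainders that can be controlled in $\vsob{-1}{0}$ via the It\^o trick (Definition \ref{def:itoTrick}) and the Graded Sector Condition (Lemma \ref{lem:1}). Passing $m \to \infty$ then produces the identity for $\phi$. The assumption $\alpha^\ast < 2 + \alpha$ plays a central role in controlling the cross-terms generated by the interaction drift under this limit; at the borderline $\alpha^\ast = 2 + \alpha$ the quadratic-covariation estimates would be exactly critical and the argument would fail, which is why the strict inequality appears in the hypothesis.
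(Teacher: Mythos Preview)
Your existence argument is essentially the paper's, though you misattribute the role of the infrared hypothesis \eqref{largescalebehaviourwellposed}: in the paper it plays no part in existence. Passing to the limit in $\mathcal{E}^{V^\varepsilon}(\eta^\varepsilon_0,X^\varepsilon)=\eta^\varepsilon$ only needs Theorem \ref{thm:Xconvergence}, Theorem \ref{thm:env}, and the elementary estimate $|(V^\varepsilon-V)(\tau_y\nabla\phi)|\lesssim \varepsilon\int \hat V(p)|p||\hat\phi(p)|\,dp$, which is finite under Assumption \ref{ass:posdef} alone.

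Your uniqueness argument, however, has a genuine circularity. You define $Y_t=X_t+\beta\int_0^t\eta_s(0)\,ds$ and then want to show that $Y$ is a continuous local martingale with $\int_0^t\eta_s(\nabla\phi)\,dY_s=M_t(\ell_\phi)$. But the only information you have about $X$ is the \emph{static} identity \eqref{eq:etaFromX}; nothing in Definition \ref{defenergysolutionsX} asserts that $X$ is a semimartingale. To derive the stochastic-integral identity from \eqref{eq:etaFromX} and the cylinder martingale problem for $\eta$ you would have to differentiate $\eta_t(\phi)=\omega(\tau_{-X_t}\phi)+\beta\int_0^t V(\tau_{-X_t+X_s}\nabla\phi)\,ds$ in $t$, i.e.\ apply It\^o's formula to a functional of $X_t$, which already presupposes the semimartingale structure you are trying to establish. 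Mollifying $\phi$ does not help: the obstruction is in $X$, not in the test function.

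The paper avoids this entirely. It never attempts to prove $X$ is a semimartingale. Instead it tests the static identity \eqref{eq:etaFromX} against $f^m(x)=m f(x/m)$ with $\mathrm{supp}\,\hat f\subset[-1,1]$ (a smooth proxy for $x\mapsto x$), and uses \emph{deterministic} Taylor expansion to write
\[
X_t=\frac{\eta_0(f^m)-\eta_t(f^m)-t\beta V(\nabla f^m)}{\eta_0(\nabla f^m)}+\frac{\eta_0(R^m_t)-\int_0^t V(Q^m_{s,t})\,ds}{\eta_0(\nabla f^m)}.
\]
The first fraction is a measurable function of $\eta$ alone. The infrared hypothesis \eqref{largescalebehaviourwellposed} enters only here, and with a different mechanism than you suggest: the upper bound $\limsup|p|^{-\alpha}\hat V(p)<\infty$ gives pathwise Besov regularity of $\eta_0$ (Lemma \ref{regularityeta}) forcing the numerator to be $O(m^{-(1+\alpha)/2+\epsilon})$, while the lower bound $\liminf|p|^{-\alpha^\ast}\hat V(p)>0$ forces $\mathrm{Var}(\eta_0(\nabla f^m))\gtrsim m^{1-\alpha^\ast}$, so the second fraction vanishes along a subsequence exactly when $\alpha^\ast<2+\alpha$. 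This expresses the law of $(\omega,X)$ as a limit of measurable functionals of $\eta\sim\text{ENV}(V,\beta,\pi_0)$, giving uniqueness without any stochastic calculus on $X$.
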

The proof of Theorem \ref{wellpsoednessenergysolutionsX} relies on the following ideas: Existence of an energy solution $(\omega,X)$ follows from the fact that we have
\begin{equation}
\label{eq:74}
\eta^{\varepsilon}=\eta^{\varepsilon}_0(X^\varepsilon+\cdot)+\beta\int_0^\cdot\nabla V^\varepsilon(X^\varepsilon_\cdot-X^\varepsilon_s+\cdot)\dd s,
\end{equation}
as well as, in law, $\eta^\varepsilon\rightarrow\eta,X^\varepsilon\rightarrow X=\mathcal{X}^V(\eta)$, so that we can at least formally pass to the limit in the above equation. Thus, we have that $X$ is an energy solution. As can be seen from the proof of Theorem \ref{wellpsoednessenergysolutionsX} below, it is straightforward to make this argument precise.

For uniqueness, we will have to work more but the underlying idea is quickly laid out as follows. Suppose that $(X,\omega)$ is an energy solution and define $\eta=\mathcal{E}^V(\omega,X)$ according to \eqref{eq:etaFromX}. Testing \eqref{eq:etaFromX} against the function $f(x)=x$ and integrating the $\nabla V$-term by parts, we formally have
\[ \eta_t(f)=-X_t\hat{\omega}(0)+\omega(f)-\beta\hat{V}(0)t, \]
and thus
\begin{equation}\label{eq:22}
X_t=\frac{1}{\hat{\eta_0}(0)} \Big(\eta_0(f)-\beta\hat{V}(0)t-\eta_t(f) \Big)
\end{equation}
This indicates that $(\omega,X)$ is a function of $\eta\sim \text{ENV}(V,\beta,\pi_0)$ and hence has a unique law. However, the RHS of the latter identity is not well-defined since testing $\eta$ against $f$ and $\nabla f$ is problematic. Therefore, we have to make an approximation argument, for which we need some preparatory results. This is also where we will make use of assumption \eqref{largescalebehaviourwellposed} since it allows us to conclude that certain error terms, which are not directly functions of $\eta$, vanish in the limit (cf. equation \eqref{approxidentityX} and Lemmas \ref{vanishingRm} and \ref{vanishingQm} below).

Let us make this idea more precise. Let $f\in \mathcal{S}$ be non-zero such that $\supp \hat{f}\subset [-1,1]$. We define \(f^m=mf(m^{-1}\cdot)\), so that $\supp( \fHat^m ) \subset [-m^{-1},m^{-1}]$. Testing \eqref{eq:etaFromX} against $f^m$, and then performing a Taylor expansion on the function $f^m(x - \cdot)$, we obtain

\begin{align*}
\eta_t(f^m) &= \eta_0 (f^m(\cdummy - X_t)) - \beta\int_0^t V(\nabla f^m(-X_t + X_s+\cdot)) \dd s \\
&= \eta_0 (f^m) - X_t \eta_0(\nabla f^m (x)) + \eta_0(R^m_t) - \beta\int_0^t V(\nabla f^m(-X_t + X_s+\cdot)) \dd s
\end{align*}
in which we have define $R^m_t$ to be the remainder in the integral form of Taylor's expansion
\begin{equation}\label{defRm}
R^m_t (x) := \int_x^{x - X_t} \Delta f^m (y) (x - X_t - y) \dd y.
\end{equation}
Rearranging, we obtain
\begin{align}
X_t &=  \frac{1}{\eta_0(\nabla f^m)} \bigg(\eta_0(f^m) - \eta_t(f^m) + \eta_0(R_t^m) - \beta\int_0^t V(\nabla f^m(-X_t + X_s + \cdot)) \dd s \bigg) \nonumber\\
&=  \frac{1}{\eta_0(\nabla f^m)} \bigg(\eta_0(f^m) - \eta_t(f^m) + \eta_0(R_t^m) - t \beta V (\nabla f^m) - \int_0^t V (Q_{s,t}^m) \dd s \bigg) \label{approxidentityX}
\end{align}
in which we have defined
\begin{equation}\label{defQm}
Q^m_{s,t} = \nabla f^m(-X_t + X_s + \cdummy) -\nabla f^m.
\end{equation}
Note that dividing by $\eta_0(\nabla f^m)$ in \eqref{approxidentityX} is justified because $\pi_0 \ll \pi$, and $\eta_0(\nabla f^m)$ is a Gaussian under $\pi$.

The only terms in \eqref{approxidentityX} that are not given purely in terms of $\eta$ are the terms $\eta_0(R_t^m)$ and $\int_0^t V(Q_{s,t}^m)\dd s$. In the case $f(x) = x$, for which $f^m(x) = x$, these terms are zero, and we recover \eqref{eq:22}. Instead, for our choice of $f^m$, , $\supp \hat{f}\subset [-1,1]$, we will show that these are error terms which vanish in the limit $m \rightarrow \infty$ upon being divided by $\eta_0(\nabla f^m)$. The crucial property being that $f^m$ has $\nabla f^m \gg \Delta f^m$, meaning that the numerator dominates the denominator in each case.

\subsubsection{Spatial regularity of $\eta_0$ and control of $R^m$}
We will need some information about the spatial regularity of realisations of $\omega \sim \pi_0$. The reason for this is that we will have to control terms of the form
\[ \omega (\phi (\cdummy + X_t)). \]
Assume for a moment that $\omega\sim\pi(V)$. If $X$ was replaced by some process $Y$ which is
independent of $\omega$, we could simply bound
\[ \bbE [\omega (\phi (\cdummy + Y_t))^2] = \bbE_{x_t \sim Y_t} [\bbE_{\omega \sim \pi} [\omega(\phi (\cdot + x_t))^2] ] \leqslant \| \phi \|^2_{L^2(\bbR)} . \]
Unfortunately, this independence is {\textit{not}} true for $X$ and thus we have to argue in a pathwise sense.

We work with the following weighted homogeneous Besov norms (see e.g.\ \cite{BahouriCheminDanchin11}). Let $\varphi \in C^{\infty}(\bbR)$ be supported in an annulus such that $\varphi_j = \varphi(2^{- j}\cdummy), j \in \bbZ$ defines a smooth partition of unity, i.e.
\[ \sum_{j \in \bbZ} \varphi_j (p) = 1, \quad p \neq 0 . \]
Unlike in the case of inhomogeneous Besov spaces, here we consider negative $j$. This allows us to measure regularity at large scales and the difficulty of our argument lies exactly in this regime since we essentially want to make sense of $\omega(g)$ for $g(x)=x$, as explained above. As usual, we write $\phi_j = \mathcal{F}^{- 1} (\varphi_j)=2^j\phi_0(2^j\cdot)$ and $\Delta_j \eta = \phi_j \ast \eta$.

For $p \in [1, \infty), s,\gamma \in \bbR$, we fix the weight $w_{\gamma}(x)=(1+|x|^2)^{\gamma/2}$, and define
\[\| \eta \|_{\dot{B}^{s}_{p, p, \gamma}}^p = \sum_{j \in\bbZ} 2^{sjp} \| w_{\gamma} \Delta_j \eta\|_{L^p}^p, \]
One checks that for conjugate indices $r, q \geqslant 1$ such that $r^{-1} + q^{-1} = 1$, we have the duality relation
\[ \langle \eta, \tilde{\eta} \rangle \leqslant \| \eta\|_{\dot{B}^{s}_{p, p, \gamma}} \| \etaTil\|_{\dot{B}^{-s}_{q, q, - \gamma}},\phantom{ooo} \eta,\tilde{\eta}\in\mathcal{S}(\bbR), \]

The proof of the following lemma is a simple computation, and it can be found in the appendix, Section \ref{sec:proof-regularity}.
\begin{lemma}\label{regularityeta} Let $V,\pi_0$ satisfy the assumptions of Theorem \ref{wellpsoednessenergysolutionsX} and let $\omega \sim \pi_0$, $r \in [1, \infty)$ and $\gamma > 1 / r$. Then it almost surely holds that $\lVert\Pi_{\leqslant 0} \omega\rVert_{\dot{B}^{-s}_{r, r, -\gamma}} < \infty$ for any $s<\frac{1+\alpha}{2}$, in which $\Pi_{\leqslant 0} \omega = \sum_{j\leqslant 0}\Delta_j\omega$.
\end{lemma}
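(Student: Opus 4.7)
The plan is to prove the stated bound in expectation (more precisely, that $\bbE[\|\Pi_{\leqslant 0}\omega\|_{\dot{B}^{-s}_{r,r,-\gamma}}^r] < \infty$), from which almost sure finiteness follows immediately. The two structural ingredients are: (i) the absolute continuity $\pi_0\ll\pi$ with density in $L^2(V)$, which reduces the estimate to the stationary Gaussian setting under $\pi$; and (ii) the upper bound on $\VHat(p)$ from the large-scale assumption \eqref{largescalebehaviourwellposed}, which controls the size of the low-frequency blocks $\Delta_j\omega$ for $j\leqslant 0$.

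First I would observe that since $\Pi_{\leqslant 0}\omega$ is frequency-localised in a ball of radius $O(1)$, the Littlewood-Paley blocks $\Delta_j\Pi_{\leqslant 0}\omega$ vanish for $j$ larger than some absolute constant, so up to a trivial adjustment of the constants the Besov norm is controlled by $\sum_{j\leqslant 0}2^{-sjr}\|w_{-\gamma}\Delta_j\omega\|_{L^r}^r$. Using Cauchy--Schwarz with the Radon-Nikodym density together with Gaussian hypercontractivity, moment bounds on $\Delta_j\omega(x)$ under $\pi_0$ reduce to variance bounds under $\pi$:
\[
\bbE_{\pi_0}\bigl[|\Delta_j\omega(x)|^r\bigr] \;\lesssim\; \lVert \dd\pi_0/\dd\pi\rVert_{L^2(V)}\,\bbE_\pi\bigl[|\Delta_j\omega(x)|^2\bigr]^{r/2}.
\]

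The core computation is then a direct application of Plancherel to the centred Gaussian $\Delta_j\omega(x)=\omega(\phi_j(x-\cdummy))$. By translation invariance (in $x$) of the covariance of $\pi$ one gets
\[
\bbE_\pi\bigl[|\Delta_j\omega(x)|^2\bigr] \;=\; \int_\bbR \VHat(p)\,|\varphi(2^{-j}p)|^2\,\dd p,
\]
and the upper bound $\VHat(p)\lesssim |p|^\alpha$ near $p=0$, together with the dyadic change of variables $q=2^{-j}p$, yields for $j\leqslant 0$
\[
\bbE_\pi\bigl[|\Delta_j\omega(x)|^2\bigr] \;\lesssim\; 2^{j(1+\alpha)} \int_\bbR |\varphi(q)|^2|q|^\alpha\,\dd q.
\]
Note that the integral on the right is finite because $\varphi$ is supported in an annulus and, more importantly, the assumption $\alpha>-1$ is exactly what is needed for integrability near $q=0$ should one take a more careful partition.

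Combining these bounds, Fubini gives
\[
\bbE_{\pi_0}\bigl[\lVert w_{-\gamma}\Delta_j\omega\rVert_{L^r}^r\bigr] \;\lesssim\; 2^{jr(1+\alpha)/2} \int_\bbR (1+|x|^2)^{-\gamma r/2}\,\dd x,
\]
and the spatial integral is finite precisely because $\gamma r>1$. Summing over $j\leqslant 0$ then leaves the geometric series $\sum_{j\leqslant 0}2^{jr((1+\alpha)/2 - s)}$, which converges iff $s<(1+\alpha)/2$, closing the proof. The only real subtlety, rather than a genuine obstacle, is the interplay between the large-scale roughness of $\omega$ (caused by the potentially singular behaviour of $\VHat$ at the origin, which forces the polynomial weight $w_{-\gamma}$ and the low-frequency cut-off $\Pi_{\leqslant 0}$) and the Hölder-type exponent $s$; the natural threshold $s<(1+\alpha)/2$ emerges directly from the dyadic summation.
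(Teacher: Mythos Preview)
Your proof is correct and follows essentially the same approach as the paper: reduce to the stationary Gaussian case, compute the variance of $\Delta_j\omega(x)$ via the Fourier representation of the covariance, use the low-frequency bound $\VHat(p)\lesssim|p|^\alpha$ to extract the factor $2^{j(1+\alpha)}$, and sum the resulting geometric series over $j\leqslant 0$ using $\gamma r>1$ for the spatial weight. In fact you supply slightly more detail than the paper does on the reduction step (Cauchy--Schwarz against $\dd\pi_0/\dd\pi$ followed by hypercontractivity) and on why the sum over $j$ truncates.
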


With the pathwise regularity of $\pi_0$ established as above, we may control the $R^m$ term as in the following lemma.
\begin{lemma}\label{vanishingRm}
Let $V,\pi$ and $\alpha >-1$ be as in Theorem \ref{wellpsoednessenergysolutionsX}, let $(\omega,X)$ be a corresponding energy solution to \eqref{eq:3}, let $\eta=\mathcal{E}^V(\omega,X)$ according to \eqref{eq:etaFromX} and define $R_t^m$ according to \eqref{defRm}. For any $t\geqslant0$ and $\epsilon>0$ almost surely there is a constant $C > 0$ for which uniformly among $m \in \bbN$, it holds
\begin{equation}\label{eq:73}
|\eta_0 (R_t^m)| \lesssim m^{-(1+\alpha)/2+\epsilon}
\end{equation}
\end{lemma}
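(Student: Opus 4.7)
The strategy is to bound $|\eta_0(R_t^m)|=|\omega(R_t^m)|$ by duality in the weighted homogeneous Besov scale, invoking Lemma \ref{regularityeta} for the rough $\omega$ factor and a direct computation for the smooth $R_t^m$ factor. Since $\widehat{f^m}(p)=m^2\hat f(mp)$ is supported in $[-1/m,1/m]$ and $R^m_t$ is built out of translates and derivatives of $f^m$, the Fourier transform $\widehat{R^m_t}$ is also supported in $[-1/m,1/m]$. Consequently $\Delta_j R^m_t=0$ for $j>j^\ast\assign -\log_2 m+O(1)$, and in particular $\omega(R^m_t)=(\Pi_{\leqslant 0}\omega)(R^m_t)$ for $m\geqslant 2$. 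By the weighted Besov duality stated in the paper,
\begin{equation*}
|\omega(R^m_t)|\;\lesssim\;\|\Pi_{\leqslant 0}\omega\|_{\dot B^{-s}_{r,r,-\gamma}}\;\|R^m_t\|_{\dot B^{s}_{r',r',\gamma}},
\end{equation*}
where $r,r'\geqslant 1$ are conjugate. Lemma \ref{regularityeta} asserts that for any $s<(1+\alpha)/2$ and $\gamma>1/r$ the first factor is almost surely finite, so it suffices to estimate the second.

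For the Besov norm of $R^m_t$, we start from the pointwise bound coming from \eqref{defRm}: since $\Delta f^m(y)=m^{-1}f''(y/m)$ and $f\in\mathcal S$, one has $|\Delta f^m(y)|\lesssim_N m^{-1}(1+|y|/m)^{-N}$ for every $N$. Taking the supremum over $y$ in the interval of integration and using its length $|X_t|$ and the factor $|x-X_t-y|\leqslant|X_t|$ yields
\begin{equation*}
|R^m_t(x)|\;\lesssim_N\;|X_t|^2\,m^{-1}\bigl(1+\max(|x|-|X_t|,0)/m\bigr)^{-N}.
\end{equation*}
Integrating against the weight $w_\gamma(x)=(1+|x|^2)^{\gamma/2}$ in $L^{r'}$ and changing variables $y=x/m$ (splitting into $|x|\leqslant 2|X_t|$ and $|x|>2|X_t|$, choosing $N$ large compared to $\gamma$ and $1/r'$) gives $\|w_\gamma R^m_t\|_{L^{r'}}\lesssim|X_t|^2 m^{\gamma-1+1/r'}$. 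Since the Littlewood--Paley projectors $\Delta_j$ are bounded on $L^{r'}(w_\gamma)$ uniformly in $j$, the same bound holds for each $\|w_\gamma\Delta_j R^m_t\|_{L^{r'}}$. Restricting the Besov sum to $j\leqslant j^\ast$ and summing the geometric series $\sum_{j\leqslant j^\ast}2^{sjr'}\lesssim m^{-sr'}$ (permissible because $s>0$ will be chosen below) produces
\begin{equation*}
\|R^m_t\|_{\dot B^{s}_{r',r',\gamma}}\;\lesssim\;|X_t|^2\,m^{-s+\gamma-1+1/r'}\;=\;|X_t|^2\,m^{-s+(\gamma-1/r)}.
\end{equation*}

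Combining the two estimates gives $|\eta_0(R^m_t)|\lesssim_{\omega,X_t}m^{-s+(\gamma-1/r)}$, so it remains to choose admissible parameters whose ``budget'' $s-(\gamma-1/r)$ is as close to $(1+\alpha)/2$ as possible. Since $s$ is restricted by $s<(1+\alpha)/2$ and $\gamma$ by $\gamma>1/r$, both constraints are strict; fixing any conjugate pair $r,r'$ and taking $s=(1+\alpha)/2-\epsilon/2$ together with $\gamma-1/r=\epsilon/2$ (both permissible because $\alpha>-1$) yields exactly the exponent $-(1+\alpha)/2+\epsilon$ claimed in \eqref{eq:73}. The main delicacy is this simultaneous saturation of two boundary conditions: the Fourier-support gain from $R^m_t$ being concentrated on scales $\lesssim 1/m$ must be paid for by the weight growth of $w_\gamma$ on scales up to $m$, and both factors need to be taken as close to the critical values as the pathwise regularity of $\omega$ supplied by Lemma \ref{regularityeta} allows. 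The $|X_t|^2$ prefactor is pathwise finite, so it is absorbed into the a.s.\ constant.
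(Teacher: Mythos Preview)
Your overall strategy coincides with the paper's: pair $\omega=\eta_0$ with $R^m_t$ via the weighted Besov duality, invoke Lemma~\ref{regularityeta} for the first factor, and estimate $\lVert R^m_t\rVert_{\dot B^{s}_{r',r',\gamma}}$ directly. Your target exponent $m^{-s+(\gamma-1/r)}$ for this norm is also exactly what the paper obtains, and the final parameter optimisation is the same.

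The gap is the sentence ``the Littlewood--Paley projectors $\Delta_j$ are bounded on $L^{r'}(w_\gamma)$ uniformly in $j$''. This is false precisely in the regime $\gamma>1/r$ that Lemma~\ref{regularityeta} forces on you. Take $g\in C_c^\infty(\bbR)$ a unit bump at the origin; for $j\ll 0$ one has $\phi_j*g\approx\phi_j$, and a direct computation gives
\[
\lVert w_\gamma\phi_j\rVert_{L^{r'}}^{r'}\;\sim\;2^{\,j(r'-1-\gamma r')}\;\longrightarrow\;\infty\quad\text{as }j\to-\infty,
\]
since $r'-1-\gamma r'<0$ exactly when $\gamma>1-1/r'=1/r$. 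Equivalently, $w_\gamma^{r'}$ is not an $A_{r'}$ weight once $\gamma>1/r$. Your sum ranges over $j\leqslant j^\ast\sim-\log_2 m\to-\infty$, so you cannot avoid this regime, and the constant you need cannot be taken independent of $m$.

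The paper addresses this difficulty by first rescaling $R^m_t(x)=m\,g_{1/m}(x/m)$ so that the $m$--dependence becomes explicit, and then bounding each $\lVert w_\gamma\Delta_j g_{1/m}\rVert_{L^{q}}$ \emph{directly} via the pointwise estimate $|g_{1/m}|\lesssim m^{-2}\psi$ together with a splitting of the convolution $|\phi_j|\ast\psi$ into a near--diagonal and an off--diagonal piece. This produces an explicit $j$--dependent bound of the form $2^{jq(1-\gamma-\delta)-1-\delta}+1$ which, after multiplication by $2^{sjq}$, is summable over $j\leqslant 0$. In other words, the $j$--dependence of $\Delta_j$ on the weighted space is real and must be tracked; it is compensated by the decay of the geometric factor $2^{sjq}$, not by a (nonexistent) uniform operator bound.
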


\begin{proof}
With Lemma \ref{regularityeta} we may bound for $r \in (1, \infty), \gamma > 1/ r, s<\frac{1+\alpha}{2}$ and with $\frac{1}{r} + \frac{1}{q} = 1$,
\[ |\eta_0(R^m_t)| \leqslant \| \Pi_{\leqslant 0}\eta_0 \|_{\dot{B}^{-s}_{r, r, -\gamma}} \| R^m_t \|_{\dot{B}^s_{q, q, \gamma}}. \]
It remains to show that we can choose such $(r, \gamma, s)$ for which $\| R^m_t \|_{\dot{B}^s_{q, q, \gamma}}$ satisfies the bound given in \eqref{eq:73}, uniformly in $m$.

Since $R^m_t=f^m(\cdot-X_t)-f^m(x)-X_t\nabla f^m$, we have for any $m$ that
\begin{equation}\label{supportmrate}
\text{supp} \, \RHat^m_t\subset [-m^{-1},m^{-1}] .
\end{equation}
With a change of variables, we write
$$ R^m_t(x)=m g_{1/m}(x/m), \quad \text{where} \quad g_{1/m}(x) := \int_x^{x-X_t/m}\Delta f(z)(x - X_t/m -z)\dd z .$$
Using \eqref{supportmrate} and the identity $\Delta_j g_\varepsilon (m^{-1}\cdot)(x)= \Delta_{j+\text{log}_2 m}g_\varepsilon(x/m)$, we have
\begin{align}
\| R^m_t \|_{\dot{B}^s_{q, q, \gamma}}^q &= \sum_{j\leqslant -\text{log}_2 m}2^{s j q}\|w_\gamma\Delta_j R^m_t\|_{L^q(\bbR)}^q \notag \\
&=\sum_{j\leqslant -\text{log}_2 m} 2^{s j q}\int w_\gamma^q(x)|\Delta_j m g_{1/m}(\cdot/m)|^q(x)\dd x\notag\\
&\lesssim m^{q(1+\gamma -s)+1} \sum_{j\leqslant 0} 2^{s j q}\int w_\gamma^q(x)|\Delta_j g_{1/m}(x)|^q\dd x .\label{largescaleRm}
\end{align}

Let us estimate the integral. We may assume that $m$ is large enough so that $|X_t/m|\leqslant 1$, in which case, upon setting $\psi(x)=\text{supp}_{z\in B_1(x)}|\Delta f(z)|$, it holds that $|g_{1/m}(x)| \lesssim_{X_t} m^{-2} \psi(x)$. Therefore, for $m$ large enough, it holds
\begin{align}
\int w_\gamma^q(x) \bigg|\int \Delta_j  g_{1/m}(x)\bigg|^q \dd x &= \int w_\gamma^q(x)\bigg|\int \phi_j(y-x)  g_{1/m}(y)\dd y\bigg|^q\dd x \nonumber\\
&\lesssim_{X_t} m^{-2q}\int w_\gamma^q(x) \bigg|\int |\phi_j(y-x)||\psi(y)|\dd y\bigg|^q\dd x \label{decompintegralest}
\end{align}
We consider small $\delta \in (0, 1)$ and split the integral based on $|y| \leqslant w_\delta(|x|)$ and $|y| > w_\delta(|x|)$.

For the case $|y| \leqslant w_\delta(|x|)$, using $\phi_j = 2^j\phi_0(2^j\cdot)$ we obtain
\begin{align*}
\int &w_\gamma^q(x)\bigg|\int_{|y| \leqslant w_\delta(|x|)} |\phi_j(y-x)||\psi(y)| \dd y\bigg|^q\dd x \\
&\lesssim \|\psi\|^q_\infty 2^{jq}\int w_{\gamma+\delta}^q(x) \sup \{ |\phi_0(z)|^q : |z - 2^{j}x| \leqslant 2^{j} w_\delta(|x|) \} \dd x
\end{align*}
Since $\phi_0$ is a Schwarz function, for any $K \geqslant 0$, there is a constant so that uniformly in $\delta \in (0, \half)$, $x \in \bbR$, $j \leqslant 0$, it holds
\[ \sup \{ |\phi_0(z)| : |z - 2^{j}x| \leqslant 2^{j}(1+|x|^2)^{\delta/2} \} \lesssim_{K} (1 + |2^jx|^2)^{-K/2} \leqslant 2^{-jK} w_K(x) \]
Continuing the estimate, we have
\begin{align*}
\int w_\gamma^q(x)\bigg|\int_{|y| \leqslant w_\delta(|x|)} |\phi_j(y-x)||\psi(y)| \dd y\bigg|^q\dd x \lesssim_{K} 2^{j(q-K)}\int w_{\gamma+\delta-K}^q(x) \dd x
\end{align*}
The integral is finite provided $(\gamma+\delta-K)q < -1$ and we choose $K = \gamma + \delta + (1+\delta)q^{-1}$. Therefore, uniformly in $j \leqslant 0$, we have
\begin{eqnarray}
\int w_\gamma^q(x)\bigg|\int_{|y| \leqslant w_\delta(|x|)} |\phi_j(y-x)||\psi(y)| \dd y\bigg|^q\dd x & \lesssim_{\delta, q, \gamma} & 2^{jq(1 - \gamma - \delta) - 1 - \delta}. \label{decompintegralest1}
\end{eqnarray}

For the case $|y| > w_\delta(|x|)$, we use that $\psi$ has super-polynomial decay. For any $K \geqslant 0$,
\begin{eqnarray}
\int w_\gamma^q(x)\bigg|\int |\phi_j(y-x)||\psi(y)||1_{|y|> L(|x|)}\dd y\bigg|^q\dd x & \lesssim_{K} & \int w_{\gamma- K\delta}^q(x)
\end{eqnarray}
which is finite provided that $(\gamma-K\delta)q < -1$, and we choose $K = K(\delta, q, \gamma)$ as such.

Plugging the two cases into \eqref{decompintegralest}, and then subsequently into \eqref{largescaleRm}, we obtain
\begin{eqnarray}
\| R^m_t \|_{\dot{B}^s_{q, q, \gamma}}^q &\lesssim_{\delta, q, \gamma}& m^{q(-1+\gamma -s)+1} \sum_{j\leqslant 0}2^{sjq} (2^{jq(1 - \gamma - \delta) - 1 - \delta} +1) .\notag
\end{eqnarray}
Given $\epsilon > 0$ we choose $s \in (0, \frac{1+\alpha}{2})$ close enough to $\frac{1+\alpha}{2}$, $r\geqslant 0$ large enough (corresponding to $q \downarrow 1$), and $\gamma > 1/p$ small enough so that
$$q(-1+\gamma -s)+1 < -(1+\alpha)/2+\epsilon$$
Finally we choose $\delta \in (0, \frac{1}{2}]$ so that the sum is finite. In fact, $\delta = \half$ will do.

\end{proof}

\subsubsection{Control of $Q^m$}
To control the $Q^m$ term, we use the following.

\begin{lemma}\label{vanishingQm}
Let $V,\pi$ and $\alpha >-1$ be as in Theorem \ref{wellpsoednessenergysolutionsX}, let $(\omega,X)$ be a corresponding energy solution to \eqref{eq:3} and define $Q_{s,t}^m$ according to \eqref{defQm}. Then, for any $0 \leqslant s \leqslant t$, almost surely there is a constant $C > 0$ for which uniformly among $m \in \bbN$, it holds
\[ \int_0^t V(Q_{s,t}^m)\dd s\lesssim m^{-\alpha-1}. \]
\end{lemma}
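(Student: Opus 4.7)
The plan is a direct Fourier computation exploiting two facts: first, that the Fourier transform of $\nabla f^m$ is supported in the small window $[-m^{-1}, m^{-1}]$ (since $\supp\hat f\subset[-1,1]$ and the rescaling $f^m = m f(m^{-1}\cdummy)$ gives $\hat{f^m}(p) = m^2\hat f(mp)$); and second, that $\hat V(p)\lesssim |p|^\alpha$ near zero by \eqref{largescalebehaviourwellposed}. Consequently, only the behaviour of $\hat V$ in the shrinking window $|p|\leqslant m^{-1}$ will be relevant, and the final scaling $m^{-(1+\alpha)}$ will emerge from the Lebesgue measure of this window weighted by $|p|^\alpha$.

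First I would compute explicitly
\[
\hat Q^m_{s,t}(p) = \iota p\, m^2\hat f(mp)\bigl(e^{\iota p(X_s-X_t)}-1\bigr),
\]
which inherits compact support in $[-m^{-1}, m^{-1}]$. Since $Q^m_{s,t}\in\calS(\bbR)$ and $\hat V\in L^1_{\text{loc}}(\bbR)$, the pairing $V(Q^m_{s,t})$ admits the absolutely convergent Fourier representation $V(Q^m_{s,t}) = (2\pi)^{-1}\int_\bbR \hat V(p)\,\hat Q^m_{s,t}(-p)\,\dd p$. The decisive step then is the elementary bound $|e^{\iota\theta}-1|\leqslant |\theta|$, which extracts an extra factor of $|p|$:
\[
|\hat Q^m_{s,t}(p)| \leqslant |X_t-X_s|\cdot|p|^2\, m^2|\hat f(mp)|.
\]
This is exactly what is needed to exploit the small-$p$ behaviour of $\hat V$: combined with $\hat V(p)\lesssim |p|^\alpha$, the integrand is of order $|p|^{2+\alpha}$, which is integrable near zero since $\alpha>-1>-3$.

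Inserting this bound and integrating in $s$ gives
\[
\Bigl|\int_0^t V(Q^m_{s,t})\,\dd s\Bigr| \lesssim \|\hat f\|_\infty \Bigl(\int_0^t|X_t-X_s|\,\dd s\Bigr)\int_{|p|\leqslant m^{-1}} |p|^{2+\alpha}\, m^2\,\dd p \lesssim_{t,X} m^{-(1+\alpha)},
\]
where the random constant is almost surely finite because $X$ is a continuous process on $[0,t]$. The only modest obstacle is justifying the Fourier pairing formula for a tempered distribution whose Fourier transform is merely locally integrable, but this is immediate from the compact support of $\hat Q^m_{s,t}$, so no decay of $\hat V$ at infinity is required.
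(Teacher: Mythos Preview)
Your proof is correct and follows essentially the same route as the paper's own argument: both compute the Fourier transform of $Q^m_{s,t}$, use the support property $\supp\hat Q^m_{s,t}\subset[-m^{-1},m^{-1}]$, apply $|e^{\iota\theta}-1|\leqslant|\theta|$ to gain an extra factor of $|p|$, and then invoke $\hat V(p)\lesssim|p|^\alpha$ near zero to extract the scaling $m^{-(1+\alpha)}$. The only cosmetic difference is that the paper performs the change of variables $p\mapsto p/m$ before bounding, whereas you bound directly in the original variable using $\|\hat f\|_\infty$; the resulting estimates are identical.
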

\begin{proof}
Let us fix $s,t\geqslant 0$. We write $Y=-X_t+X_s$. Using that $\hat{f}^m(p)=m^2\fHat(mp)$ and $\text{supp}\hat{f}^m\subset [-m^{-1},m^{-1}]$, we have
\begin{eqnarray}
|V(\nabla f^m(\cdot+Y)-\nabla f^m)|&= &m^2\bigg|\int_{-m^{-1}}^{m^{-1}}\hat{V}(p)p \hat{f}(mp)(1-e^{2\pi ipY})\dd p\bigg|\notag\\
&=& \bigg|\int_{-1}^1\hat{V}(p/m)p \hat{f}(p)(1-e^{2\pi ipY/m})\dd p\bigg| \lesssim |Y|m^{-\alpha-1}\notag
\end{eqnarray}
\end{proof}

\subsubsection{Proof of Theorem \ref{wellpsoednessenergysolutionsX}}
Now, we are in the position to prove Theorem \ref{wellpsoednessenergysolutionsX}. We first show existence of an energy solution and then uniqueness.

\begin{proof}[Proof of Theorem \ref{wellpsoednessenergysolutionsX}]
\textit{Existence of $X$.} Let $\eta\sim\text{ENV}(V,\beta,\pi_0)$ and define $X=\mathcal{X}^V(\eta)$. We claim that $(\eta_0, X)$ is an energy solution of the SDE. That is to say, upon defining $\etaTil:=\mathcal{E}^V(\eta_0,X)$, we claim that $\law(\etaTil) = \text{ENV}(V,\beta,\pi_0)$. Since in Theorem \ref{thm:env}, we characterised $\text{ENV}(V,\beta,\pi_0)$ as the limit of the natural approximation, $(\eta^\varepsilon : \varepsilon \in (0, 1))$, it suffices to show that $\eta^\varepsilon \convd \etaTil$.

Recall that $\eta^\varepsilon = \mathcal{E}^{V^\varepsilon}(\eta^\varepsilon_0,X^\varepsilon)$, and by definition $\etaTil:=\mathcal{E}^V(\eta_0,X)$. At this stage, we are almost done, because Theorem \ref{thm:Xconvergence} tells us that $(\eta^\varepsilon_0,X^\varepsilon) \convd (\eta_0,X)$. The only remaining issue is the $\varepsilon$-dependence in the function $\calE^{V^\varepsilon}$. To that end, for fixed $t \geqslant 0, \phi \in \calS(\bbR)$, we write
\begin{equation}
\label{eq:82}
\eta^\varepsilon_t(\phi) = F_{\phi, t}(\eta^\varepsilon_0, X^\varepsilon) + r^\varepsilon_{\phi, t}(X^\varepsilon)
\end{equation}
in which $F_{\phi, t} : \calS'(\bbR) \times C([0, T], \bbR) \rightarrow \bbR$ is the continuous function
$$F_{\phi, t}(\omega, X) := \omega(\tau_{-X_t} \phi) + \beta\int_0^{t} V(\tau_{-X_t + X_s} \phi_i) \dd s $$
and $r^\varepsilon_{\phi, t}$ is the remainder
$$ r^\varepsilon_{\phi, t}(X) := \beta\int_0^{t} (V^\varepsilon - V)(\tau_{-X_t + X_s} \phi) \dd s $$
In the limit, \eqref{eq:82} holds with no remainder, in that $\etaTil_t(\phi) = F_{\phi, t}(\eta_0, X)$.

To obtain the result $\eta^\varepsilon \convd \etaTil$, it is sufficient to show that for times $0 \leqslant t_1 \leqslant \ldots \leqslant t_n$, a collection of test functions $\phi_i\in\mathcal{S}(\bbR)$, and for $f \in \calS(\bbR^n)$, it holds that $\bbE[f((\eta^\varepsilon_{t_i}(\phi_i))_{i=1}^n)]$ converges as $\varepsilon \rightarrow 0$ to $\bbE[f((\etaTil_{t_i}(\phi_i))_{i=1}^n)]$. Using the functions defined above, that is to say
\begin{equation}
\label{eq:78}
\lim_{\varepsilon\rightarrow0} \bbE[f((F_{\phi_i, t_i}(\eta^\varepsilon_0, X^\varepsilon) + r^\varepsilon_{\phi_i, t_i}(X^\varepsilon))_{i=1}^n)] = \bbE[f((F_{\phi_i, t_i}(\eta_0, X))_{i=1}^n)]
\end{equation}
It follows from Theorem \ref{thm:Xconvergence} that
$$ \lim_{\varepsilon\rightarrow0} \bbE[f((F_{\phi_i, t_i}(\eta^\varepsilon_0, X^\varepsilon))_{i=1}^n)] = \bbE[f((F_{\phi_i, t_i}(\eta_0, X))_{i=1}^n)] $$
Therefore, \eqref{eq:78} follows once we can show that
\begin{equation}
\label{eq:83}
\lim_{\varepsilon\rightarrow0} \bbE[f((F_{\phi_i, t_i}(\eta^\varepsilon_0, X^\varepsilon))_{i=1}^n) - f((F_{\phi_i, t_i}(\eta^\varepsilon_0, X^\varepsilon) + r^\varepsilon_{\phi_i, t_i}(X^\varepsilon))_{i=1}^n)] = 0
\end{equation}

To prove \eqref{eq:83}, we use a Taylor expansion in $f$. This gives
$$ |f((F_{\phi_i, t_i}(\eta^\varepsilon_0, X^\varepsilon))_{i=1}^n) - f((F_{\phi_i, t_i}(\eta^\varepsilon_0, X^\varepsilon) + r^\varepsilon_{\phi_i, t_i}(X^\varepsilon))_{i=1}^n)| \lesssim \lVert\nabla f\rVert \max_{i \in 1{:}n}\{|r^\varepsilon_{\phi_i, t_i}(X^\varepsilon)|\} $$
Performing a Taylor expansion on $\rhoHat^\varepsilon(p) = \rhoHat(\varepsilon p)$, we obtain
\begin{align*}
|r^\varepsilon_{\phi_i, t_i}(X^\varepsilon)| \leqslant \beta t \int \VHat(p) |\phiHat(p)| |\rhoHat^\varepsilon(p)^2 - 1 | \dd p \lesssim_\rho \beta t \varepsilon \int \VHat(p) |p| |\phiHat(p)| \dd p
\end{align*}
Since the RHS converges to zero as $\varepsilon \rightarrow 0$, we obtain \eqref{eq:83}.

\textit{Uniqueness of $X$.} Let $(\omega,X)$ be an energy solution, put $\eta=\mathcal{E}^V(\eta_0,X)$. We rewrite the identity of \eqref{approxidentityX} according to
\begin{equation*}
X_t = F_t^m(\eta) + \frac{\eta_0(R_t^m) - \int_0^t V (Q_{s,t}^m) \dd s}{\eta_0(\nabla f^m)}, \quad F_t^m(\eta):=\frac{\eta_0(f^m)-\eta_t(f^m)-t V (\nabla f^m)}{\eta_0(\nabla f^m)}.
\end{equation*}
We claim that there is a subsequence along which almost surely
\begin{equation}
\label{eq:79}
\frac{\eta_0(R_t^m) - \int_0^t V(Q_{s,t}^m) \dd s }{\eta_0(\nabla f^m)} \rightarrow 0
\end{equation}
The uniqueness then follows from this claim because a bounded continuous observable of $(\omega, X)$ could be computed from the statistics of $\eta$ by taking a limit along the subsequence, as follows: for bounded continuous $g_i : \bbR \rightarrow \bbR$, $\psi : \calS'(\bbR) \rightarrow \bbR$,
\[ \bbE[\psi(\omega)g_1(X_{t_1})...g_1(X_{t_1})]=\lim_m \bbE[\psi(\eta_0)g_1(F_{t_1}^m(\eta))...g_n(F_{t_n}^m(\eta))] . \]
This determines the law of $X$ uniquely.

To obtain \eqref{eq:79}, we invoke Lemmas \ref{vanishingRm} and \ref{vanishingQm}, which imply that for any $\epsilon > 0$ it holds
\[ m^{(1+\alpha)/2-\epsilon}\bigg|\eta_0(R_t^m) - \int_0^t V (Q_{s,t}^m) \dd s\bigg|\lesssim 1 . \]
It remains to find a subsequence along which $(m^{(1+\alpha)/2-\epsilon} \eta_0(\nabla f^m))$ diverges.

For $\pi_0=\pi(V)$, we have that $\eta_0(\nabla f^m)$ is a centered Gaussian with standard deviation
\begin{eqnarray*}
\|\nabla f^m\|_{L^2(V)} &\simeq & m^2 \sqrt{\int_{\bbR}\hat{V}(p)|k|^2|\hat{f}(mp)|^2\dd p}\\
&= & m^{1/2} \sqrt{\int_{\bbR}\hat{V}(p/m)|k|^2|\hat{f}(p)|^2\dd p}\\
& \gtrsim & m^{(1-\alpha^\ast)/2} \sqrt{\int_\bbR|p|^2|\hat{f}(p)|^2\dd p} .
\end{eqnarray*}
In particular, the variance of $(m^{(1+\alpha)/2-\epsilon} \eta_0(\nabla f^m))_{m\in \bbN}$ is diverging, because $\frac{1+\alpha}{2}+\frac{1-\alpha^\ast}{2}>0$. Therefore we may take a subsequence along which $m^{(1+\alpha)/2-\delta}\eta_0(\nabla f^m)$ diverges almost surely. Since this is a property of the law $\pi(V)$, it is inherited for the general case in which $\pi_0 \ll \pi$.
\end{proof}

\subsection{Superdiffusivity}
\label{subsec:superdiffusivity}
Superdiffusive bounds are given in \cite{TarrTothValk12_DiffusivityBounds} for the SRBP in the smooth setting of Section \ref{sec:mollified-srbp}, and under certain assumptions on the infra-red behaviour of the interaction function $V$. The techniques considered there generalise {\it mutatis mutandis} to the present case, giving rise to Theorem \ref{thm:super}. In this subsection, we give a summary of the key ideas, the missing details can be found in \cite{TarrTothValk12_DiffusivityBounds}.

\begin{proof}[Proof sketch of Theorem \ref{thm:super}]
We begin with the observation that in the definition of $X$, the two terms on the RHS of \eqref{Xfrometa} are uncorrelated. This is due to a property known as Yaglom reversibility, which holds for the smooth SRBP, and the property persists in the limit. This implies that
$$ D(t) = t + \bbE\Big[\Big(\int_0^t \eta_s(0) \dd x \Big)^2\Big] $$
To show superdiffusivity, it is sufficient to consider the latter term.

For cylinder function $f \in \calC$, it holds, writing $\langle \cdot, \cdot\rangle$ in place of $\langle \cdot, \cdot\rangle_{\vfock}$
\begin{equation}\label{eq:41}
\int_0^\infty e^{-\lambda t} \bbE\Big[\Big(\int_0^t f(\eta_s) \dd x \Big)^2\Big] \dd t = 2 \lambda^{-2} \langle f, (\lambda - \calL)^{-1} f\rangle
\end{equation}
For the smooth case, this is a standard computation, and again it is a property that persists in the limit. By continuity, \eqref{eq:41} also holds in the case of $f = \delta_0$, and therefore it remains to control $\langle \delta_0, (\lambda - \calL)^{-1} \delta_0\rangle$.

For the upper bound, we use $\langle \delta_0, (\lambda - \calL)^{-1} \delta_0\rangle \leqslant \langle \delta_0, (\lambda - \calL_0)^{-1} \delta_0\rangle$. Write $M := \sup_{z \in \bbZ} \int_z^{z+1} \VHat(q) \dd q$, and let $R > 0$ be such that $\sup_{|p| \leqslant R} |p|^{-\alpha} \VHat(p) < C$ for some $C < \infty$. Then it holds
\begin{align*}
\langle \delta_0, (\lambda - \calL_0)^{-1} \delta_0\rangle &= \int_\bbR \frac{\VHat(q)}{\lambda + \thalf |q|^2} \dd q \\
&\lesssim_{C, M} \int_{|x| \leqslant R} \frac{|q|^\alpha}{\lambda + \thalf |q|^2} \dd q + \sum_{n = 0}^\infty \int_{R + n < |q| \leqslant R + n + 1} \frac{\VHat(q)}{|q|^2} \dd q \\
&\lesssim_{M, R} \lambda^{(\alpha-1)/2} + 1
\end{align*}
where we used $\sum_{n = 0}^\infty (n+R)^{-2} \lesssim_R 1$.

The lower bound is derived from the variational formula
$$ \langle \delta_0, (\lambda - \calL)^{-1} \delta_0\rangle = \sup_{f \in \vfock} \{ 2\langle\delta_0, f\rangle - \langle f, (\lambda - \calL_0)f\rangle - \langle \calA f, (\lambda - \calS)^{-1} \calA f\rangle \} $$
Considering test functions of the form $f = \ell_\phi$ for $\phi \in \calS(\bbR)$ yields a lower bound, and moreover $\calA_- \ell_\phi = 0$ in this case:
\begin{align}
\langle \delta_0, (\lambda - \calL)^{-1} \delta_0\rangle &\geqslant \sup_{\phi \in \calS(\bbR)} \{ 2\langle\delta_0, \ell_\phi\rangle - \langle \ell_\phi, (\lambda - \calL_0)\ell_\phi\rangle - \langle \calA_+ \ell_\phi, (\lambda - \calL_0)^{-1} \calA_+ \ell_\phi\rangle \} \nonumber \\
&\geqslant \tfrac{1}{4} \int_\bbR \frac{\VHat(p)} {\lambda + \thalf |p|^2 + |p|^2 J(\lambda, p)} \dd p \label{eq:45}
\end{align}
where the last line comes from explicit computations, a specific minimising choice of $\phi$, and features the inner-integral
$$ J(\lambda, p) := \beta^2 \int_\bbR \frac{\VHat(q)}{\lambda + \thalf |p+q|^2} \dd q $$

We claim that $\sup_{|p| \leqslant \lambda^{1/2}} J(\lambda, p) \lesssim \beta^2 \lambda^{(\alpha-1)/2}$ as $\lambda \rightarrow 0$. Indeed, continuing with the above definitions of $R, C, M > 0$, it holds
\begin{align*}
\int_\bbR \frac{\VHat(q)}{\lambda + \thalf |p+q|^2} \dd q &\lesssim_C \int_{|q-p| \leqslant R} \frac{|q-p|^\alpha}{\lambda + |q|^2} \dd q + \int_{|q-p| > R} \frac{\VHat(q)}{|q|^2} \dd q \\
&\leqslant \lambda^{(\alpha-1)/2} \int_{|q-p\lambda^{-1/2}| \leqslant R\lambda^{-1/2}} \frac{|q-p\lambda^{-1/2}|^\alpha}{1 + |q|^2} \dd q + \int_{|q-p| > R} \frac{\VHat(q)}{|q|^2} \dd q \\
&\leqslant \lambda^{(\alpha-1)/2} \sup_{|r| \leqslant 1} \int_{\bbR} \frac{|q-r|^\alpha}{1 + |q|^2} \dd q + \int_{|q| > R - \sqrt{\lambda}} \frac{\VHat(q)}{|q|^2} \dd q
\end{align*}
where in the last line we used the assumption $|p| \leqslant \lambda^{1/2}$ and the triangle inequality. We assume that $\lambda>0$ is small enough so that $R - \sqrt{\lambda} \geqslant \thalf R$, in which case, continuing from the above display, and using the finiteness of the first integral, it holds
\begin{align*}
\int_\bbR \frac{\VHat(q)}{\lambda + \thalf |p+q|^2} \dd q &\lesssim_{C} \lambda^{(\alpha-1)/2} + \int_{|q| > \thalf R} \frac{\VHat(q)}{|q|^2} \dd q \\
&\lesssim_{M, R} \lambda^{(\alpha-1)/2} + 1
\end{align*}
where again we used $\sum_{n = 0}^\infty (n+R)^{-2} \lesssim_R 1$.

Combining the claim with \eqref{eq:45} yields that as $\lambda \rightarrow 0$ it holds
$$ \langle \delta_0, (\lambda - \calL)^{-1} \delta_0\rangle \gtrsim_{R, C, M} \int_{|p| \leqslant \lambda^{1/2}} \frac{\VHat(p)} {\lambda + \lambda^{(\alpha-1)/2} |p|^2 } \dd p $$
Using the lower bound in \eqref{eq:42}, let $R' > 0$ be such that $\inf_{|p| \leqslant R'} |p|^{-\alpha} \VHat(p) > C'$ for some constant $C' > 0$. Plugging this, and assuming $\lambda > 0$ is small enough so that $\lambda^{1/2} < R'$, we obtain
\begin{align*}
\langle \delta_0, (\lambda - \calL)^{-1} \delta_0\rangle \gtrsim_{R, C, M, C'} \int_{|p| \leqslant \lambda^{1/2}} \frac{|p|^\alpha} {\lambda + \lambda^{(\alpha-1)/2} |p|^2 } \dd p \gtrsim \lambda^{-(1-\alpha)^2/4}
\end{align*}
This concludes the proof.

\end{proof}

\appendix
\section{Appendix}
\subsection{Proof of Proposition \ref{prp:iib}}\label{sec:proof-prop-iib}
\begin{proof}[Proof of Proposition \ref{prp:iib}]
Write $M := \sup_{z \in \bbZ} \int_z^{z+1} \VHat(q) \dd q$. Fix $R > 0$ and split the integral into regions $\{ |p+q| \leqslant R \}$ and $\{ |p+q| > R \}$. This yields
\begin{align*}
\int_\bbR \frac{\VHat(q)}{(\lambda + \thalf |p+q|^2)^s} \dd q &\lesssim \lambda^{-s} \int_{|q| \leqslant R} \VHat(q-p) \dd q + \sum_{n = 1}^\infty \int_{nR < |q| \leqslant (n+1)R} \frac{\VHat(q-p)}{|q|^{2s}} \dd q \\
&\lesssim_M \lambda^{-s} R + R^{1 - 2s} < \infty
\end{align*}
where we have used $\sum_{n = 1}^\infty n^{-2s} < \infty$. The claim $\lim_{\lambda\rightarrow\infty} J^s(\lambda) = 0$ follows from choosing $R = \sqrt{\lambda}$.
\end{proof}

\subsection{Proof of (\ref{approxabstractPDE})}\label{sec:proof-abstractpde}
We present the following standard argument that is required in the proof of Theorem \ref{existenceKBE}. 

\begin{proof}[Proof of \eqref{approxabstractPDE}]
For $\lambda > 0$, consider the Banach space $X=C_T \vsob{1}{2}$ with respect to the norm $\| F \|_X := \sup_{0 \leqslant t \leqslant T} e^{- \lambda t} {\| F (t) \|_{\vsob{1}{2}}}$. We begin by establishing the existence of a fixed point $F^m \in X $ of the map

$$\Phi : X \rightarrow X, \quad F \mapsto e^{\mathcal{L}_0 t}F_0 + \int_0^{\mathbf{t}} e^{\mathcal{L}_0 (t - s)} \mathcal{A}^m  F (s) \dd s$$
We have, for $\varepsilon > 0$,
\begin{align*}
{\| \Phi (F) - \Phi (\psi) \|_X}  &\leqslant \sup_{0 \leqslant t \leqslant T} \int_0^t e^{- \lambda s} \| e^{- (\lambda - \mathcal{L}_0) (t - s)} \mathcal{A}^m (F - \psi) (s) \|_{\vsob{1}{2}} \dd s\\
&= \sup_{0 \leqslant t \leqslant T} \int_0^t e^{- \lambda s} \| e^{-(\lambda - \mathcal{L}_0) (t - s)} ((\lambda - \mathcal{L}_0) (t - s))^{\varepsilon - \varepsilon} \mathcal{A}^m(F - \psi) (s) \|_{\vsob{1}{2}} \dd s\\
&\lesssim_{\varepsilon} \sup_{0 \leqslant t \leqslant T} \int_0^t e^{- \lambda s} \| ((\lambda - \mathcal{L}_0) (t - s))^{- \varepsilon} \mathcal{A}^m(F - \psi) (s) \|_{\vsob{1}{2}} \dd s\\
&\leqslant \sup_{0 \leqslant t \leqslant T} \lambda^{- \varepsilon} \int_0^t e^{- \lambda s} (t - s)^{- \varepsilon} \| \mathcal{A}^m(F - \psi) \|_{\vsob{1}{2}} (s) \dd s\\
&\lesssim_m \lambda^{- \varepsilon} \| F - \psi \|_X   \sup_{0 \leqslant t \leqslant T} \int_0^t (t - s)^{- \varepsilon} \dd s\\
&\lesssim_{\varepsilon, T} \lambda^{- \varepsilon} \| F - \psi \|_X .
\end{align*}
Hence, for $\lambda > 0$ large enough, we obtain a fixed point  of $\Phi$. Differentiating the respective fixed point equation in $t$ with convergence in $\vsob{-1}{2}$, we see that $F^m$ is indeed fulfils \eqref{approxabstractPDE}. Since $\mathcal{L}^m \in L(\vsob{1}{2}, \vsob{-1}{0})$, see \eqref{eq:34}, the latter also means that $F^m \in H^1_T \vsob{-1}{0}$.
\end{proof}

\subsection{Proof of Lemma \ref{regularityeta}}\label{sec:proof-regularity}

\begin{proof}[Proof of Lemma \ref{regularityeta}]
W.l.o.g. we may assume that $\pi_0=\pi(V)$. Note $\int w_{-\gamma}^r (x)\dd x<\infty$. Using that $\Delta_j\omega(x)$ is Gaussian for each $j$, we obtain
\begin{eqnarray}
\bbE \Big[ \sum_{j \leqslant 0} 2^{-sjr} \int w_{-\gamma}^r (x) | \Delta_j \omega (x) |^p \dd x \Big]  \notag &\simeq &\int w_{-\gamma}^r (x)\dd x  \sum_{j \leqslant 0} 2^{-sjr}  \Big( \int \hat{V} (p) | \varphi (2^{- j} p) |^2 \dd p \Big)^{r / 2}  \notag \\
&\lesssim & \sum_{j \leqslant 0}2^{-sjr}2^{rj(1+\alpha)/2} \notag ,
\end{eqnarray}
which is finite because $(1+\alpha)/2 - s > 0$.
\end{proof}

\endappendix
\section*{Acknowledgements}
The authors would like to thank G.~Cannizzaro for helpful feedback on this work. L.~G.\ thanks C.~Ling and A.~Martini for helpful discussions. H.~G. gratefully acknowledges financial support via the EPSRC Mathematical Sciences Doctoral Training Partnership EP/W523793/1. L.~G. gratefully acknowledges funding by DFG through EXC 2046, Berlin Mathematical School and through IRTG 2544 ``Stochastic Analysis in Interaction'' as well as the UKRI Future Leaders Fellowship, 2022 - ``Large-scale universal behaviour of Random Interfaces and Stochastic Operators'' MR/W008246/1 (G. ~Cannizzaro).

\bibliographystyle{Martin}
\bibliography{srbp_refs,srbp_refs_manual}

\begin{thebibliography}{CMOW25}
\def\myhref#1#2{\href{#2}{\nolinkurl{#1}}}

\bibitem[ABRK24]{Armstrong2024}
\textsc{S.~Armstrong}, \textsc{A.~Bou-Rabee}, and \textsc{T.~Kuusi}.
\newblock Superdiffusive central limit theorem for a {B}rownian particle in a
  critically-correlated incompressible random drift.
\newblock \emph{arXiv preprint arXiv:2404.01115} (2024).

\bibitem[APP83]{AmitPariPeli83_AsymptoticBehavior}
\textsc{D.~J. Amit}, \textsc{G.~Parisi}, and \textsc{L.~Peliti}.
\newblock Asymptotic behavior of the ``true" self-avoiding walk.
\newblock \emph{Physical Review B} \textbf{27}, no.~3, (1983), 1635--1645.
\newblock
  \myhref{doi:10.1103/PhysRevB.27.1635}{https://dx.doi.org/10.1103/PhysRevB.27.1635}.

\bibitem[BCD11]{BahouriCheminDanchin11}
\textsc{H.~Bahouri}, \textsc{J.-Y. Chemin}, and \textsc{R.~Danchin}.
\newblock \emph{Fourier analysis and nonlinear partial differential equations},
  vol. 343 of \emph{Grundlehren der Mathematischen Wissenschaften}.
\newblock Springer, 2011.

\bibitem[BM24]{ButkovskyMytnik24}
\textsc{O.~Butkovsky} and \textsc{L.~Mytnik}.
\newblock Weak uniqueness for singular stochastic equations.
\newblock \url{https://arxiv.org/abs/2405.13780}, 2024.
\newblock Preprint.

\bibitem[CC18]{CannizzaroChouk18}
\textsc{G.~Cannizzaro} and \textsc{K.~Chouk}.
\newblock Multidimensional {SDEs} with singular drift and universal
  construction of the polymer measure with white noise potential.
\newblock \emph{Annals of Probability} \textbf{46}, no.~3, (2018), 1710--1763.

\bibitem[CG16]{CatellierGubinelli2016}
\textsc{R.~Catellier} and \textsc{M.~Gubinelli}.
\newblock Averaging along irregular curves and regularisation of odes.
\newblock \emph{Stochastic Processes and Their Applications} \textbf{126},
  no.~8, (2016), 2323--2366.

\bibitem[CG25]{CannizzarGiles25_InvariancePrinciple}
\textsc{G.~Cannizzaro} and \textsc{H.~Giles}.
\newblock An invariance principle for the 2d weakly self-repelling {{Brownian}}
  polymer.
\newblock \emph{Probability Theory and Related Fields} (2025).
\newblock
  \myhref{doi:10.1007/s00440-025-01363-y}{https://dx.doi.org/10.1007/s00440-025-01363-y}.

\bibitem[CGT24]{CannGubiToni24_GaussianFluctuations}
\textsc{G.~Cannizzaro}, \textsc{M.~Gubinelli}, and \textsc{F.~Toninelli}.
\newblock Gaussian {{Fluctuations}} for the {{Stochastic Burgers Equation}} in
  {{Dimension}} {$d\ge 2$}.
\newblock \emph{Communications in Mathematical Physics} \textbf{405}, no.~4,
  (2024), 89.
\newblock
  \myhref{doi:10.1007/s00220-024-04966-z}{https://dx.doi.org/10.1007/s00220-024-04966-z}.

\bibitem[CHST21]{CannizzaroHaunschmidSibitzToninelli21}
\textsc{G.~Cannizzaro}, \textsc{L.~Haunschmid-Sibitz}, and
  \textsc{F.~Toninelli}.
\newblock Sqrt(log t)-superdiffusivity for a {Brownian} particle in the curl of
  the 2d {GFF}.
\newblock \emph{Annals of Probability} \textbf{50}, no.~6, (2021), 2475--2498.

\bibitem[CMOW25]{ChatzigeorgiouEtAl25_GaussianFreefield}
\textsc{G.~Chatzigeorgiou}, \textsc{P.~Morfe}, \textsc{F.~Otto}, and
  \textsc{L.~Wang}.
\newblock The {{Gaussian}} free-field as a stream function: {{Asymptotics}} of
  effective diffusivity in infra-red cut-off.
\newblock \emph{The Annals of Probability} \textbf{53}, no.~4, (2025),
  1510--1536.
\newblock
  \myhref{doi:10.1214/24-AOP1740}{https://dx.doi.org/10.1214/24-AOP1740}.

\bibitem[DD16]{DelarueDiel16}
\textsc{F.~Delarue} and \textsc{R.~Diel}.
\newblock Rough paths and 1d {SDE} with a time dependent distributional drift:
  application to polymers.
\newblock \emph{Probability Theory and Related Fields} \textbf{165}, no. 1-2,
  (2016), 1--63.

\bibitem[DGLL24]{DareiotisGerencerLeLing24}
\textsc{K.~Dareiotis}, \textsc{M.~Gerencsér}, \textsc{K.~Lê}, and
  \textsc{C.~Ling}.
\newblock Regularisation by gaussian rough path lifts of fractional brownian
  motions.
\newblock https://arxiv.org/pdf/2412.01645, 2024.
\newblock Preprint.

\bibitem[Eva10]{Evans10_PartialDifferential}
\textsc{L.~C. Evans}.
\newblock \emph{Partial Differential Equations}.
\newblock No. v. 19 in Graduate Studies in Mathematics. American Mathematical
  Society, Providence, R.I, 2nd ed ed., 2010.

\bibitem[FIR17]{FlandoliIssoglioRusso17}
\textsc{F.~Flandoli}, \textsc{E.~Issoglio}, and \textsc{F.~Russo}.
\newblock Multidimensional stochastic differential equations with
  distributional drift.
\newblock \emph{Transactions of the American Mathematical Society}
  \textbf{369}, (2017), 1665--1688.

\bibitem[FRW04]{FlandoliRussoWolf04}
\textsc{F.~Flandoli}, \textsc{F.~Russo}, and \textsc{J.~Wolf}.
\newblock Some sdes with distributional drift.: part ii: lyons-zheng structure,
  ito's formula and semimartingale characterization.
\newblock \emph{Random Operators and Stochastic Equations} \textbf{12}, no.~02,
  (2004), 145--184.
\newblock
  \myhref{doi:10.1515/156939704323074700}{https://dx.doi.org/10.1515/156939704323074700}.

\bibitem[GG22]{GaleatiGubinelli2022}
\textsc{L.~Galeati} and \textsc{M.~Gubinelli}.
\newblock Noiseless regularisation by noise.
\newblock \emph{Revista Matematica Iberoamericana} \textbf{38}, no.~2, (2022),
  433--502.

\bibitem[GG25]{GaleatiGerencser2025}
\textsc{L.~Galeati} and \textsc{M.~Gerencs{\'e}r}.
\newblock Solution theory of fractional sdes in complete subcritical regimes.
\newblock \emph{Forum of Mathematics, Sigma} \textbf{13}, no.~12(2025).

\bibitem[GIP15]{Gubinelli2015Paracontrolled}
\textsc{M.~Gubinelli}, \textsc{P.~Imkeller}, and \textsc{N.~Perkowski}.
\newblock Paracontrolled distributions and singular {PDEs}.
\newblock \emph{Forum of Mathematics, Pi} \textbf{3}, no.~e6(2015).

\bibitem[GJ13]{GubinelliJara13}
\textsc{M.~Gubinelli} and \textsc{M.~Jara}.
\newblock Regularization by noise and stochastic {Burgers} equations.
\newblock \emph{Stochastic Partial Differential Equations: Analysis and
  Computations} \textbf{1}, no.~2, (2013), 325--350.

\bibitem[GJ14]{GoncalvesJara14_NonlinearFluctuations}
\textsc{P.~Gon{\c c}alves} and \textsc{M.~Jara}.
\newblock Nonlinear {{Fluctuations}} of {{Weakly Asymmetric Interacting
  Particle Systems}}.
\newblock \emph{Archive for Rational Mechanics and Analysis} \textbf{212},
  no.~2, (2014), 597--644.
\newblock
  \myhref{doi:10.1007/s00205-013-0693-x}{https://dx.doi.org/10.1007/s00205-013-0693-x}.

\bibitem[GP18]{GubinelPerkows18_EnergySolutions}
\textsc{M.~Gubinelli} and \textsc{N.~Perkowski}.
\newblock Energy solutions of {{KPZ}} are unique.
\newblock \emph{Journal of the American Mathematical Society} \textbf{31},
  no.~2, (2018), 427--471.
\newblock \myhref{doi:10.1090/jams/889}{https://dx.doi.org/10.1090/jams/889}.

\bibitem[GP20]{GubinelPerkows20_InfinitesimalGenerator}
\textsc{M.~Gubinelli} and \textsc{N.~Perkowski}.
\newblock The infinitesimal generator of the stochastic {{Burgers}} equation.
\newblock \emph{Probability Theory and Related Fields} \textbf{178}, no. 3-4,
  (2020), 1067--1124.
\newblock
  \myhref{doi:10.1007/s00440-020-00996-5}{https://dx.doi.org/10.1007/s00440-020-00996-5}.

\bibitem[GP24]{GraefnerPerkowski24}
\textsc{L.~Gr{\"a}fner} and \textsc{N.~Perkowski}.
\newblock {Weak well-posedness of energy solutions to singular SDEs with
  supercritical distributional drift}.
\newblock \url{https://arxiv.org/pdf/2407.09046}, 2024.
\newblock Preprint.

\bibitem[GPP24]{GrafPerkPopa24_EnergySolutions}
\textsc{L.~Gr{\"a}fner}, \textsc{N.~Perkowski}, and \textsc{S.~Popat}.
\newblock Energy solutions of singular {{SPDEs}} on {{Hilbert}} spaces with
  applications to domains with boundary conditions, 2024.
\newblock \myhref{arXiv:2411.07680}{https://arxiv.org/abs/2411.07680}.

\bibitem[Gr{\"a}21]{Graefner21}
\textsc{L.~Gr{\"a}fner}.
\newblock \emph{Fraktionale stochastische quasigeostrophische Gleichungen auf
  dem zweidimensionalen Torus}.
\newblock Master's thesis, Humbolt-Universit t zu Berlin, 2021.
\newblock Master's thesis.

\bibitem[Gr{\"a}24]{Graefner24}
\textsc{L.~Gr{\"a}fner}.
\newblock {Energy solutions to SDEs with supercritical distributional drift: A
  stopping argument}.
\newblock \url{https://arxiv.org/pdf/2407.09222}, 2024.
\newblock Preprint.

\bibitem[Gr{\"a}25]{Graefner25}
\textsc{L.~Gr{\"a}fner}.
\newblock Energy solutoins and singular s(p)des, 2025.
\newblock PhD thesis, submitted.

\bibitem[GT20]{GubinelliTurra20}
\textsc{M.~Gubinelli} and \textsc{M.~Turra}.
\newblock Hyperviscous stochastic {Navier--Stokes} equations with white noise
  invariant measure.
\newblock \emph{Stochastics and
  Dynamics\href{https://www.worldscientific.com/toc/sd/20/06}{}} \textbf{20},
  no.~6(2020).

\bibitem[Hai14]{Hairer2014}
\textsc{M.~Hairer}.
\newblock A theory of regularity structures.
\newblock \emph{Invent. Math.} \textbf{198}, no.~2, (2014), 269--504.

\bibitem[Hid80]{Hida80_BrownianMotion}
\textsc{T.~Hida}.
\newblock \emph{Brownian {{Motion}}}.
\newblock Springer US, New York, NY, 1980.
\newblock
  \myhref{doi:10.1007/978-1-4612-6030-1}{https://dx.doi.org/10.1007/978-1-4612-6030-1}.

\bibitem[HP21]{HarangPerkowski2021}
\textsc{F.~A. Harang} and \textsc{N.~Perkowski}.
\newblock {$C^{\infty}$}-regularization of odes perturbed by noise.
\newblock \emph{Stochastics and Dynamics} \textbf{21}, no.~08, (2021), 2140010.
\newblock
  \myhref{arXiv:https://doi.org/10.1142/S0219493721400104}{https://arxiv.org/abs/https://doi.org/10.1142/S0219493721400104}.
\newblock
  \myhref{doi:10.1142/S0219493721400104}{https://dx.doi.org/10.1142/S0219493721400104}.

\bibitem[HTV12]{HorvTothVeto12_DiffusiveLimits}
\textsc{I.~Horv{\'a}th}, \textsc{B.~T{\'o}th}, and \textsc{B.~Vet{\H o}}.
\newblock Diffusive limits for ``true'' (or myopic) self-avoiding random walks
  and self-repellent {{Brownian}} polymers in d {$\geq$} 3.
\newblock \emph{Probability Theory and Related Fields} \textbf{153}, no. 3-4,
  (2012), 691--726.
\newblock
  \myhref{doi:10.1007/s00440-011-0358-3}{https://dx.doi.org/10.1007/s00440-011-0358-3}.

\bibitem[HW23]{HuangWang23}
\textsc{X.~Huang} and \textsc{X.~Wang}.
\newblock Path dependent mckean-vlasov sdes with hölder continuous diffusion.
\newblock \emph{Discrete and Continuous Dynamical Systems - S} \textbf{16},
  no.~5, (2023), 982--998.

\bibitem[HZ23]{HaoZhang23}
\textsc{Z.~Hao} and \textsc{X.~Zhang}.
\newblock {SDES} {WITH} {SUPERCRITICAL} {DISTRIBUTIONAL} {DRIFTS}.
\newblock \url{https://arxiv.org/pdf/2312.11145}, 2023.
\newblock Preprint.

\bibitem[Jan97]{Janson97_GaussianHilbert}
\textsc{S.~Janson}.
\newblock \emph{Gaussian {{Hilbert Spaces}}}.
\newblock Cambridge {{Tracts}} in {{Mathematics}}. Cambridge University Press,
  Cambridge, 1997.
\newblock
  \myhref{doi:10.1017/CBO9780511526169}{https://dx.doi.org/10.1017/CBO9780511526169}.

\bibitem[Kal21]{Kallenberg21_FoundationsModern}
\textsc{O.~Kallenberg}.
\newblock \emph{Foundations of {{Modern Probability}}}, vol.~99 of
  \emph{Probability {{Theory}} and {{Stochastic Modelling}}}.
\newblock Springer International Publishing, Cham, 2021.
\newblock
  \myhref{doi:10.1007/978-3-030-61871-1}{https://dx.doi.org/10.1007/978-3-030-61871-1}.

\bibitem[KLO12]{KomoLandOlla12_FluctuationsMarkov}
\textsc{T.~Komorowski}, \textsc{C.~Landim}, and \textsc{S.~Olla}.
\newblock \emph{Fluctuations in {{Markov Processes}}: {{Time Symmetry}} and
  {{Martingale Approximation}}}, vol. 345 of \emph{Grundlehren Der
  Mathematischen {{Wissenschaften}}}.
\newblock Springer Berlin Heidelberg, Berlin, Heidelberg, 2012.
\newblock
  \myhref{doi:10.1007/978-3-642-29880-6}{https://dx.doi.org/10.1007/978-3-642-29880-6}.

\bibitem[KP23]{Kremp2023}
\textsc{H.~Kremp} and \textsc{N.~Perkowski}.
\newblock Rough weak solutions for singular {L\'e}vy {SDE}s.
\newblock \emph{arXiv:2309.15460} (2023).

\bibitem[KP25]{KosyginPeterso25_ConvergenceRescaled}
\textsc{E.~Kosygina} and \textsc{J.~Peterson}.
\newblock Convergence of rescaled "true" self-avoiding walks to the
  {{T{\'o}th-Werner}} "true" self-repelling motion, 2025.
\newblock \myhref{arXiv:2502.10960}{https://arxiv.org/abs/2502.10960}.

\bibitem[LZ21]{LuoZhu21}
\textsc{D.~Luo} and \textsc{R.~Zhu}.
\newblock Stochastic {mSQG} equations with multiplicative transport noises:
  {White} noise solutions and scaling limit.
\newblock \emph{Stochastic Processes and their Applications} \textbf{140},
  (2021), 236--286.

\bibitem[Mit83]{Mitoma83_TightnessProbabilities}
\textsc{I.~Mitoma}.
\newblock Tightness of probabilities on {$C(\lbrack 0, 1 \rbrack;
  \mathcal{S}')$} and {$D(\lbrack 0, 1 \rbrack; \mathcal{S}')$}.
\newblock \emph{The Annals of Probability} \textbf{11}, no.~4, (1983),
  989--999.
\newblock
  \myhref{doi:10.1214/aop/1176993447}{https://dx.doi.org/10.1214/aop/1176993447}.

\bibitem[MS96]{MadrasSlade96_SelfAvoidingWalk}
\textsc{N.~Madras} and \textsc{G.~Slade}.
\newblock \emph{The {{Self-Avoiding Walk}}}.
\newblock Birkh{\"a}user Boston, Boston, MA, 1996.
\newblock
  \myhref{doi:10.1007/978-1-4612-4132-4}{https://dx.doi.org/10.1007/978-1-4612-4132-4}.

\bibitem[NRW87]{NorrRogeWill87_SelfavoidingRandom}
\textsc{J.~R. Norris}, \textsc{L.~C.~G. Rogers}, and \textsc{D.~Williams}.
\newblock Self-avoiding random walk: {{A Brownian}} motion model with local
  time drift.
\newblock \emph{Probability Theory and Related Fields} \textbf{74}, no.~2,
  (1987), 271--287.
\newblock
  \myhref{doi:10.1007/BF00569993}{https://dx.doi.org/10.1007/BF00569993}.

\bibitem[ORT20]{OhashiRussoTeixeira20}
\textsc{A.~Ohashi}, \textsc{F.~Russo}, and \textsc{A.~Teixeira}.
\newblock On some path-dependent sdes involving distributional drifts.
\newblock https://arxiv.org/abs/2002.02384, 2020.
\newblock Preprint.

\bibitem[RY91]{RevouzYor1991}
\textsc{D.~Revuz} and \textsc{M.~Yor}.
\newblock \emph{Continuous Martingales and Brownian Motion}, vol. 293 of
  \emph{A Series of Comprehensive Studies in Mathematics}.
\newblock Springer-Verlag Berlin Heidelberg GmbH, 1991.

\bibitem[T{\'o}t95]{Toth95_TrueSelfAvoiding}
\textsc{B.~T{\'o}th}.
\newblock The ``{{True}}" {{Self-Avoiding Walk}} with {{Bond Repulsion}} on
  {$\mathbb{Z}$}: {{Limit Theorems}}.
\newblock \emph{The Annals of Probability} \textbf{23}, no.~4, (1995),
  1523--1556.
\newblock
  \myhref{doi:10.1214/aop/1176987793}{https://dx.doi.org/10.1214/aop/1176987793}.

\bibitem[TTV12]{TarrTothValk12_DiffusivityBounds}
\textsc{P.~Tarr{\`e}s}, \textsc{B.~T{\'o}th}, and \textsc{B.~Valk{\'o}}.
\newblock Diffusivity bounds for {{1D Brownian}} polymers.
\newblock \emph{The Annals of Probability} \textbf{40}, no.~2(2012).
\newblock \myhref{doi:10.1214/10-AOP630}{https://dx.doi.org/10.1214/10-AOP630}.

\bibitem[TV12]{TothValko12_SuperdiffusiveBounds}
\textsc{B.~T{\'o}th} and \textsc{B.~Valk{\'o}}.
\newblock Superdiffusive {{Bounds}} on {{Self-repellent Brownian Polymers}} and
  {{Diffusion}} in the {{Curl}} of the {{Gaussian Free Field}} in d=2.
\newblock \emph{Journal of Statistical Physics} \textbf{147}, no.~1, (2012),
  113--131.
\newblock
  \myhref{doi:10.1007/s10955-012-0462-5}{https://dx.doi.org/10.1007/s10955-012-0462-5}.

\bibitem[TW98]{TothWerner98_TrueSelfrepelling}
\textsc{B.~T{\'o}th} and \textsc{W.~Werner}.
\newblock The true self-repelling motion.
\newblock \emph{Probability Theory and Related Fields} \textbf{111}, no.~3,
  (1998), 375--452.
\newblock
  \myhref{doi:10.1007/s004400050172}{https://dx.doi.org/10.1007/s004400050172}.

\bibitem[WYZ25]{WangYuanZhao25}
\textsc{F.-Y. Wang}, \textsc{C.~Yuan}, and \textsc{X.-Y. Zhao}.
\newblock Path-distribution dependent sdes: well-posedness and asymptotic
  log-harnack inequality.
\newblock \emph{Discrete and Continuous Dynamical Systems - S} (2025).
\newblock
  \myhref{doi:10.3934/dcdss.2025109}{https://dx.doi.org/10.3934/dcdss.2025109}.

\bibitem[ZZ17]{ZhangZhao17}
\textsc{X.~Zhang} and \textsc{G.~Zhao}.
\newblock Heat kernel and ergodicity of sdes with distributional drifts.
\newblock https://arxiv.org/abs/1710.10537, 2017.
\newblock Preprint.

\end{thebibliography}

\end{document}